\documentclass[a4paper, 11pt]{amsart}
\usepackage{amssymb, amsthm,amsmath}
\usepackage{enumerate}
\usepackage{stmaryrd}
\usepackage{mathrsfs}
\PassOptionsToPackage{hyphens}{url}
\usepackage[all]{xy}
\xyoption{rotate}
\usepackage{soul}
\usepackage{hyperref}

\numberwithin{equation}{section}
\newtheorem{thm}[equation]{Theorem}
\newtheorem{lem}[equation]{Lemma}
\newtheorem{prop}[equation]{Proposition}
\newtheorem{cor}[equation]{Corollary}

\newtheorem{question}[equation]{Open Question}
\newtheorem{introthm}{Theorem}

\theoremstyle{definition}
\newtheorem{defn}[equation]{Definition}
\newtheorem{define}[equation]{Definition}
\theoremstyle{remark}
\newtheorem{rem}[equation]{Remark}

\newtheorem{example}[equation]{Example}

\newcommand{\Hom}{\operatorname{Hom}}
\newcommand{\End}{\operatorname{End}}
\newcommand{\Ext}{\operatorname{Ext}}
\newcommand{\Ind}{\operatorname{Ind}}
\newcommand{\Res}{\operatorname{Res}}
\newcommand{\FP}{\operatorname{FP}}
\newcommand{\PFP}{\operatorname{PFP}}

\usepackage{color}

\newcommand{\comm}[1]{}

\begin{document}

\title[Probabilistic Finiteness Properties for Profinite Groups]{Probabilistic Finiteness Properties \\ for Profinite Groups}
\author{Ged Corob Cook, Matteo Vannacci}
\date{}
\thanks{The first author was supported by ERC grant 336983 and Basque government grant IT974-16. The second author acknowledges support from the research training group \textit{GRK 2240: Algebro-Geometric Methods in Algebra, Arithmetic and Topology}, funded by the DFG}
\subjclass[2010]{Primary 20J05, 20E18; Secondary 20C20, 20C25, 20J06.}

\begin{abstract}
We introduce various probablistic finiteness conditions for profinite groups related to positive finite generation (PFG). We investigate completed group rings which are PFG as modules, and use this to answer \cite[Question 1.2]{KV} on positively finitely related groups. Using the theory of projective covers, we define and characterise a probabilistic version of the $\FP_n$ property for profinite groups, called $\PFP_n$. Finally, we prove how these conditions are related to previously defined finiteness conditions and each other.
\end{abstract}

\maketitle

\section*{Introduction}
\subsection*{Motivation}
%The study of finiteness properties of groups and geometric objects has a long history (references). These properties define classes of groups in terms of finitely many \emph{finitely generated} pieces.% This approach has been extremely successful in the past (references)...

The study of finiteness properties of abstract groups has a long history; see \cite{Brown} for some background. Analogously to abstract groups, a profinite group $G$ is said to be of \emph{type $\FP_n$} over a profinite ring $R$ if there is a projective resolution $\ldots \to P_n \to \ldots \to P_1\to P_0 \to R \to 0$ with $P_0, \ldots, P_n$ finitely generated profinite $R \llbracket G \rrbracket$-modules. 

%Such properties can be analogously defined in the realm of profinite groups (see...). 
Even the first steps into studying this property run into some difficulties for profinite groups that do not occur in the abstract case; for instance $\FP_1$ and finite generation are not equivalent (see \cite{Damian}). 

In this paper we consider an alternative notion of finite generation. The class of \emph{positively finitely generated} groups (PFG groups for short) was introduced in \cite{Mann} and it consists of those profinite groups $G$ where, for some $k$, $k$ Haar-random elements generate $G$ with positive probability (cf.\ Section \ref{sec:pfgpfrmore}). \emph{Positive finite relatedness} (PFR) was introducted in \cite{KV} as a higher analogue of PFG (see Section \ref{sec:pfgpfrmore} for the definition).

In the same spirit we will define and study some related `probabilistic' finiteness properties, such as \emph{positively finitely presented} profinite groups, and %groups with \emph{UBERG}.
% 
%Analogously to type $\FP_n$, 
we introduce a new family of higher finiteness properties for profinite groups via the concept of \emph{PFG modules} and \emph{modules of type $\PFP_n$}. 

\subsection*{Main results}

For convenience, we provide a diagram showing the relationships between the various conditions studied in the paper.

\[\xymatrix@C=50pt{
\text{PFG} \ar@/^1.0pc/@{=>}[rr]^-{\text{finitely presented, Proposition \ref{prop:PFGimpliesPFR}}} \ar@{=>}[d]_{\text{\cite[Theorem 4.4]{Damian}}}^{(\subsetneq, \text{\cite[Example 4.5]{Damian}})}
		&& \txt{positively\\finitely\\presented} \ar@{=>}[ll]^-{(\subsetneq,\text{\cite[Remark 2]{Vannacci}})} \ar@{=>}[d]^-{(\subsetneq, \text{Theorem \ref{PFRnotPFG}})} \\
\text{APFG} \ar@{=>}[dr]_-*!/^12pt/[@]{\labelstyle \text{Proposition \ref{APFGring}}} \ar@{=>}[dd]_{\text{Lemma \ref{APFGPFP1}}}
		&& \text{PFR} \ar@{=>}[ll]_{\text{\cite[Theorem 5.6]{KV}, Proposition \ref{APFGring}}}^{(\subsetneq,\text{\cite[Remark 2]{Vannacci}})} \ar@{=>}[dd]^{\text{Lemma \ref{pfp2}}} \\
	& \text{UBERG} \ar@{=>}[ur]^(0.45)*!/_16pt/[@]{\labelstyle \text{finitely presented,}}_*!/^12pt/[@]{\labelstyle \text{\cite[Theorem 5.6]{KV}}} \\
\PFP_1
		&& \PFP_2 \ar@{=>}[ll]_{(\subsetneq, \text{Proposition} \ref{FPnnotn+1})}
			& \cdots \ar@{=>}[l]_{(\subsetneq, \text{Proposition} \ref{FPnnotn+1})}
}\]

Implications in this diagram which follow trivially from the definitions have been left without references; for those marked $(\subsetneq,-)$, this reference provides a counterexample showing the reverse implication fails; for those marked `finitely presented', the implication holds for finitely presented profinite groups, though not in general.

In Section \ref{sec:APFGdefn} we consider two natural modules associated to a profinite group $G$: the group ring $\hat{\mathbb{Z}}\llbracket G \rrbracket$ and the augmentation ideal $I_{\hat{\mathbb{Z}}}\llbracket G \rrbracket$. When $\hat{\mathbb{Z}}\llbracket G \rrbracket$ is PFG (as a $\hat{\mathbb{Z}}\llbracket G \rrbracket$-module; see Section \ref{sec:APFGdefn}) we say that $G$ has \emph{UBERG}. If the ideal $I_{\hat{\mathbb{Z}}}\llbracket G \rrbracket$ is PFG as a $\hat{\mathbb{Z}}\llbracket G \rrbracket$-module we say that $G$ is \emph{APFG}. We show that APFG implies UBERG, and the converse is true if $G$ has type $\FP_1$ over $\hat{\mathbb{Z}}$ (see Proposition \ref{APFGring}).

In Section \ref{sec:PFGPFR} we answer \cite[Question 1.2]{KV}, by showing:

\begin{introthm}
If $G$ is finitely presented and PFG, it is PFR, but not all PFR groups are PFG.
\end{introthm}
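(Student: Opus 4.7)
The statement splits into two independent parts, and I would handle each by composing results that appear elsewhere in the diagram.

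For the forward implication ``finitely presented $+$ PFG $\Rightarrow$ PFR'', the plan is to traverse the left-hand column of the main diagram. First, \cite[Theorem 4.4]{Damian} gives PFG $\Rightarrow$ APFG: positive finite generation of $G$ translates into positive finite generation of the augmentation ideal $I_{\hat{\mathbb{Z}}}\llbracket G \rrbracket$ as a $\hat{\mathbb{Z}}\llbracket G \rrbracket$-module. Next, I would invoke the (easy direction of) Proposition \ref{APFGring}, to be proved in Section \ref{sec:APFGdefn}, which yields APFG $\Rightarrow$ UBERG; concretely, if a tuple positively generates $I_{\hat{\mathbb{Z}}}\llbracket G \rrbracket$, then adjoining $1 \in \hat{\mathbb{Z}}\llbracket G \rrbracket$ gives a tuple positively generating the full ring. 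Finally, \cite[Theorem 5.6]{KV} promotes UBERG to PFR under the hypothesis that $G$ is finitely presented, by analysing a finite presentation and showing that the induced relation module inherits positive generation from UBERG of the ambient group ring. Concatenating these three steps gives the first half.

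For the second half, it suffices to produce a single profinite group which is PFR but not PFG, and I would simply cite the example constructed in \cite[Remark 2]{Vannacci}, which serves this purpose.

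The one piece of genuinely new work sits inside Proposition \ref{APFGring}, but in the direction needed here (APFG $\Rightarrow$ UBERG) the argument is essentially formal via the short exact sequence
\[
0 \to I_{\hat{\mathbb{Z}}}\llbracket G \rrbracket \to \hat{\mathbb{Z}}\llbracket G \rrbracket \to \hat{\mathbb{Z}} \to 0.
\]
The real content of the present theorem is therefore organisational: recognising that the three cited/forthcoming implications compose across the finite-presentation hypothesis, and pairing this with the existing counterexample to obtain the non-equivalence. I do not anticipate a serious obstacle beyond verifying that the finite-presentation hypothesis is used only at the UBERG $\Rightarrow$ PFR step and is not silently needed at the earlier steps.
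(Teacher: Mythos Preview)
Your argument for the forward implication is correct and matches the paper's proof of Proposition~\ref{prop:PFGimpliesPFR} essentially line for line: PFG $\Rightarrow$ APFG via \cite[Theorem 4.4]{Damian}, then APFG $\Rightarrow$ UBERG via the augmentation sequence~\eqref{eq:augmentation}, then UBERG $+$ finitely presented $\Rightarrow$ PFR via \cite{KV}.

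However, the second half of your plan contains a genuine error. The example in \cite[Remark 2]{Vannacci} is an infinitely iterated wreath product of copies of $A_{36}$; as the paper itself recalls at the end of Section~\ref{examples}, this group is PFG (by Quick's work) but \emph{not finitely presented}, and hence \emph{not} PFR. In other words, that reference supplies a witness to ``PFG $\not\Rightarrow$ PFR'' in the absence of finite presentation---exactly the wrong direction for what you need. In the diagram it is the label on the arrow from positively finitely presented back to PFG, not on the arrow down to PFR.

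The construction of a PFR group which is not PFG is in fact the main new content of Section~\ref{sec:PFGPFR} and is not a citation. The paper takes $G=\prod_{n\ge 15}A_n^{2^n}$ (two-generated but not PFG, following Mann), passes to its Schur cover $\tilde G=\prod_{n\ge 15}(A_n^\ast)^{2^n}$, shows $\tilde G$ is still not PFG (being a Frattini cover), proves $\tilde G$ has UBERG by a Damian-style count of irreducible modules, and then establishes finite presentability of $\tilde G$ via uniform bounds $\dim H^2(\tilde G,M)\le 3\dim M$ using cohomological results of Guralnick--Kantor--Kassabov--Lubotzky. UBERG together with finite presentation then gives PFR. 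This is substantial work; your proposal would need to incorporate it (or an alternative construction) rather than appeal to \cite{Vannacci}.
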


In Section \ref{sec:posfinpres} we introduce the notion of \emph{positively} \emph{fi\-ni\-te\-ly} \emph{presented} profinite groups, as a natural analogue of the definition of finitely presented groups -- that is, groups that have a `PFG presentation' (see Definition \ref{defn:posfinpres}) -- based on the fact that all PFG groups admit an epimorphism from a PFG projective group. We prove some important properties of positively finitely presented groups (see Section \ref{sec:frattinicovers} for the definition of universal Frattini cover of a profinite group).

\begin{introthm}[Properties of positively finitely presented groups]\label{thm:posfinpres}
\quad
\begin{enumerate}[(i)]
\item The class of positively finitely presented profinite groups is closed under extensions (see Lemma \ref{pfp-extension}).
\item  A profinite group $G$ is positively finitely presented if and only if the kernel of the universal Frattini cover $\tilde{G} \to G$ of $G$ is positively normally finitely generated in $\tilde{G}$ (see Proposition \ref{pfp-frattinicover}).
\item  In the class of PFG groups, positive finite presentability is equivalent to PFR (see Corollary \ref{pfp=pfr}).
\end{enumerate}
\end{introthm}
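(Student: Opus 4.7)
The plan is to prove the three parts in order, using the defining property of positively finitely presented groups (the existence of a \emph{PFG projective cover} with positively normally finitely generated kernel) together with two standard facts: every projective profinite group admits a lift along any continuous epimorphism, and a continuous epimorphism of profinite groups pushes Haar measure onto Haar measure. The second fact immediately implies that the image of a positively normally finitely generated normal subgroup under such an epimorphism is positively normally finitely generated in the target, which will be the single workhorse of the argument.

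For part (i), given an extension $1 \to N \to G \to Q \to 1$ with $N$ and $Q$ positively finitely presented, I would take PFG projective covers $F_N \to N$ and $F_Q \to Q$ whose kernels $K_N$ and $K_Q$ are positively normally finitely generated. Using projectivity of $F_Q$, lift $F_Q \to Q$ through $G \to Q$ to a map $F_Q \to G$. The free profinite product of $F_N$ and $F_Q$ is again PFG and projective (the coproduct of projectives is projective in the category of profinite groups), and the combined map to $G$ is surjective because its image contains $N$ and projects onto $Q$. The kernel is then normally generated by $K_N$, a lift of $K_Q$, and the action relations expressing conjugation of $N$ by $F_Q$ inside $G$; I would then verify this combined set of normal generators can be chosen positively normally finitely generated using the hypotheses on $K_N,K_Q$ together with the Haar-pushforward observation above.

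For part (ii), the forward direction starts with a PFG projective cover $\pi\colon F \to G$ whose kernel $K$ is positively normally finitely generated. Projectivity of $F$ lifts $\pi$ along $\tilde G \to G$ to some $\alpha\colon F \to \tilde G$; the image $\alpha(F)$ surjects onto $G$ and together with $\ker(\tilde G \to G) \leq \Phi(\tilde G)$ generates all of $\tilde G$, so the non-generator property of the Frattini subgroup forces $\alpha$ to be surjective. Applying Haar-pushforward together with the identity $\alpha(K) = \ker(\tilde G \to G)$ yields positive normal finite generation in $\tilde G$. For the converse, positively finitely presented implies PFG (from the diagram), and for a PFG group the universal Frattini cover $\tilde G$ is itself PFG (the minimal number of generators is preserved, and the same argument promotes positive generation from $G$ to $\tilde G$); thus $\tilde G \to G$ is itself a PFG projective cover, and the hypothesis that its kernel is positively normally finitely generated makes it a witness to positive finite presentability of $G$.

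For part (iii), the implication positively finitely presented $\Rightarrow$ PFR for any profinite group is the arrow already appearing in the diagram, so it only remains to observe that if $G$ is PFG and PFR then any PFR presentation $1 \to R \to F \to G \to 1$ with $F$ free profinite of finite rank exhibits $F$ as PFG and projective and $R$ as positively normally finitely generated, which is exactly the data of a positive finite presentation of $G$. The main obstacle I anticipate is the explicit handling of the action relations in the coproduct construction of part (i); an alternative and arguably cleaner route would be to deduce (i) from (ii) by analyzing $\ker(\tilde G \to G)$ in terms of the kernels of $\tilde N \to N$ and $\tilde Q \to Q$ via the surjection $\tilde G \to \tilde Q$ produced from projectivity and the Frattini argument used above.
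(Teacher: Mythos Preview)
Your approach has a genuine gap that recurs in parts (i) and (iii): free profinite constructions are almost never PFG. Concretely, $\hat{\mathbb{Z}} \amalg \hat{\mathbb{Z}}$ (profinite free product) is the free profinite group $F_2$, which is not PFG --- indeed it does not even have UBERG (see Remark~\ref{rem:prop6.1notfingen} and Corollary~\ref{PFGtoUBERG}). So in (i), the free profinite product of your PFG projective covers $F_N$ and $F_Q$ will typically fail to be PFG, and your proposed presentation of $G$ is not of the required form. The same error appears in (iii): a free profinite group $F$ of rank $\geq 2$ is projective but not PFG, so your sentence ``exhibits $F$ as PFG and projective'' is false, and the presentation you write down does not witness positive finite presentability. (The fix in (iii) is easy: use the universal Frattini cover $\tilde G$ instead of a free group, since $\tilde G$ is PFG by Lemma~\ref{fratcover} and finitely generated, so PFR applies to the map $\tilde G \to G$.)

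There is also a definitional mismatch in (ii). The paper's Definition~\ref{defn:posfinpres} is a \emph{for all} condition: every PFG projective presentation must have positively normally finitely generated kernel. You are working with an \emph{existence} version. With the paper's definition, the forward direction of (ii) is immediate (the universal Frattini cover is one of the presentations covered by the definition), while the substantive direction is the converse: from positive normal finite generation of $\ker(\tilde G \to G)$ alone, deduce the same for \emph{every} PFG projective $P \to G$. Your ``witness'' argument does not address this.

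The paper sidesteps all of these difficulties by first proving a clean characterisation (Proposition~\ref{prop:PFG and fin pres}): \emph{positively finitely presented $\iff$ PFG and finitely presented}. Once this is in hand, (i) and (iii) become one-line consequences of standard closure properties of ``finitely presented'' and ``PFG'' (Lemma~\ref{pfp-extension}, Corollary~\ref{pfp=pfr}), and the hard direction of (ii) is handled by lifting an arbitrary PFG projective presentation through $\tilde G$ and applying a short extension lemma for positive normal finite generation (Lemma~\ref{lem:pfng}). This route avoids free products entirely.
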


%Our next main result if the complete answer to \cite[Question 1.2]{KV} about the exact relation between PFG and PFR.

In Section \ref{sec:PFP_modules} we start our investigation of positively finite generated (PFG) \emph{modules} and modules of \emph{type $\PFP_n$}%, their definition is analogous to that of positively finitely generated groups
(see Sections \ref{sec:APFGdefn} and \ref{sec:PFPn_modules}). As for groups, any PFG module admits an epimorphism from a PFG projective module, so we can form projective resolutions from these, and define a module to have type $\PFP_n$ if it has a projective resolution $P_\ast$ in which $P_0,\ldots,P_n$ are PFG. Additionally, we develop several fundamental tools to work with PFG modules and we obtain a characterisation (in the spirit of the Mann-Shalev Theorem) of modules of type $\PFP_n$ in terms of growth conditions on the sizes of certain $\mathrm{Ext}$-groups (see Theorem \ref{extcondition}). 

As usual, there is a similar definition of type $\PFP_n$ for groups. The first crucial observation, using the theory of projective covers, is that a profinite group has type $\PFP_0$ over a commutative ring $R$ if and only if $R$ is PFG; and it turns out that type $\PFP_n$ coincides with type $\FP_n$ in the class of prosoluble groups (see Remark \ref{ex:PFPnprosoluble}).

Our next main result is an equivalent cohomological characterisation of type $\PFP_n$ obtained by applying Theorem \ref{extcondition}. For a commutative profinite ring $R$ and $k\in \mathbb{N}$, denote by $\mathcal{S}^{R \llbracket G \rrbracket}_k$ the set of irreducible $R \llbracket G \rrbracket$-modules of order $k$.

\begin{introthm}
\label{extconditionintro}
Let $G$ be a profinite group and $R$ a commutative profinite ring. Then, $G$ has type $\PFP_n$ over $R$ if and only if $\sum_{S \in \mathcal{S}^{R \llbracket G \rrbracket}_k} (\lvert H_R^m(G,S) \rvert-1)$ has polynomial growth in $k$ for all $m \leq n$.
\end{introthm}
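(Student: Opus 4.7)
The plan is to reduce Theorem C directly to the Mann--Shalev-type characterisation provided by Theorem \ref{extcondition}, which is the main technical result of Section \ref{sec:PFP_modules}. The argument has three logically distinct steps, of which only the first two require any real work (and both rest on machinery developed earlier in the paper).

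First, I would translate the statement about $G$ into one about the trivial module. As remarked immediately after Theorem \ref{extcondition}, the definition of type $\PFP_n$ for profinite groups is the evident analogue of the classical $\FP_n$: a profinite group $G$ has type $\PFP_n$ over $R$ if and only if the trivial $R\llbracket G\rrbracket$-module $R$ admits a projective resolution $P_\bullet\to R$ in which $P_0,\ldots,P_n$ are PFG as $R\llbracket G\rrbracket$-modules, i.e., iff $R$ has type $\PFP_n$ as an $R\llbracket G\rrbracket$-module.

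Second, I would apply Theorem \ref{extcondition} to the module $M=R$. That theorem asserts that an $R\llbracket G\rrbracket$-module $M$ has type $\PFP_n$ if and only if, for every $m\leq n$, the sum
\[
\sum_{S\in \mathcal{S}^{R\llbracket G\rrbracket}_k}\bigl(|\Ext^m_{R\llbracket G\rrbracket}(M,S)|-1\bigr)
\]
has polynomial growth in $k$. It then remains only to identify $\Ext^m_{R\llbracket G\rrbracket}(R,S)\cong H^m_R(G,S)$, which is the standard definition of profinite group cohomology with coefficients in $S$ (one computes it precisely by taking a projective resolution of the trivial module and applying $\Hom_{R\llbracket G\rrbracket}(-,S)$).

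The genuine content of Theorem C is therefore packed into Theorem \ref{extcondition}, and the passage to the group-cohomological statement is formal. The only things to check carefully in the deduction are bookkeeping points: that the notion of ``irreducible module of order $k$'' used to index $\mathcal{S}^{R\llbracket G\rrbracket}_k$ coincides in the group and module settings (which is immediate, since the indexing set does not depend on $M$), and that the $\Ext$--cohomology identification remains valid in the profinite module category for the simple coefficients $S$ appearing here — a standard fact in the profinite cohomology literature. The main obstacle to proving the theorem is thus entirely absorbed into the proof of Theorem \ref{extcondition}; once that is in hand, Theorem C is essentially a reformulation.
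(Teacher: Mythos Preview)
Your proposal is correct and matches the paper's approach exactly: Theorem~C is obtained by applying Theorem~\ref{extcondition} to the trivial $R\llbracket G\rrbracket$-module $M=R$ and identifying $\Ext^m_{R\llbracket G\rrbracket}(R,S)$ with $H^m_R(G,S)$. The paper states this explicitly in the introduction and provides no separate proof beyond this reduction.
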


When $G$ is known to have type $\FP_n$ over $R$, we get another characterisation of type $\PFP_n$, from Corollary \ref{Damian3}.

\begin{introthm}
Let $G$ be a profinite group of type $\FP_n$ and $R$ a commutative profinite ring. Then, $G$ has type $\PFP_n$ over $R$ if and only if $$\lvert \{ S \in \mathcal{S}^{R \llbracket G \rrbracket}_k: H_R^m(G,S) \neq 0\} \rvert$$ has polynomial growth in $k$ for all $m \leq n$.
\end{introthm}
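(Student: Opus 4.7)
The plan is to reduce this statement to the cohomological characterisation of type $\PFP_n$ given in Theorem~\ref{extconditionintro}. Under that characterisation, $G$ has type $\PFP_n$ over $R$ if and only if the sum $\Sigma_m(k) := \sum_{S \in \mathcal{S}^{R\llbracket G\rrbracket}_k}(|H_R^m(G,S)|-1)$ grows polynomially in $k$ for every $m \leq n$. The task therefore becomes comparing $\Sigma_m(k)$ with the counting function $f_m(k) := |\{S \in \mathcal{S}^{R\llbracket G\rrbracket}_k : H_R^m(G,S) \neq 0\}|$, and showing that under the hypothesis of type $\FP_n$ they grow polynomially together.

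One direction requires no extra hypothesis: since every nonzero $H_R^m(G,S)$ contributes at least $1$ to the sum, we have $f_m(k) \leq \Sigma_m(k)$, so polynomial growth of $\Sigma_m$ (equivalently, type $\PFP_n$ by Theorem~\ref{extconditionintro}) forces polynomial growth of $f_m$. For the converse, the key input is that type $\FP_n$ provides a uniform polynomial bound $|H_R^m(G,S)| \leq k^{d_m}$ for all $m \leq n$ and all $S$ of order $k$. To see this, fix a projective resolution $\cdots \to P_n \to \cdots \to P_0 \to R \to 0$ in which each $P_i$ (for $i \leq n$) is a finitely generated $R\llbracket G\rrbracket$-module, say on $d_i$ generators, and compute $H_R^m(G,S)$ as the cohomology of $\Hom_{R\llbracket G\rrbracket}(P_\ast, S)$; since continuous $R\llbracket G\rrbracket$-homomorphisms from $P_m$ are determined by their values on the generators, $\Hom_{R\llbracket G\rrbracket}(P_m, S) \hookrightarrow S^{d_m}$, giving the bound. (This is precisely the content of Corollary~\ref{Damian3}.) Hence
\[
\Sigma_m(k) \leq (k^{d_m}-1)\cdot f_m(k),
\]
so polynomial growth of $f_m$ propagates to $\Sigma_m$, and Theorem~\ref{extconditionintro} finishes the argument.

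The only delicate point is the cohomology bound, which must be carried out in the continuous/profinite setting; this is exactly what Corollary~\ref{Damian3} is designed to supply, and once it is invoked the rest of the proof is a one-line comparison inequality. I would not anticipate any further obstacles: the constants $d_m$ depend only on the chosen resolution and not on $S$ or $k$, so both inequalities relating $\Sigma_m$ and $f_m$ are genuinely uniform in $k$.
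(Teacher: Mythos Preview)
Your argument is correct and is in fact simpler than the route the paper takes. The paper derives this statement from Corollary~\ref{Damian3}, which in turn rests on the minimal-resolution machinery of Section~\ref{pfpn_groups}: one identifies $i_{K^G_{m-1}}(M)=h_G^m(M)$ for the kernels $K^G_{m-1}$ in the minimal resolution, and then invokes a Damian-style theorem (a finitely generated module $N$ is PFG if and only if the number of irreducible $M$ with $i_N(M)\geq 1$ grows polynomially). Your approach bypasses all of this: you reduce directly to Theorem~\ref{extconditionintro} and use only the elementary bound $|H_R^m(G,S)|\leq |\Hom_{R\llbracket G\rrbracket}(P_m,S)|\leq k^{d_m}$ coming from finite generation of $P_m$. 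This has the added benefit of working uniformly over any commutative profinite $R$, whereas the paper's argument is written out over $\hat{\mathbb{Z}}$.

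One point to fix: your parenthetical ``This is precisely the content of Corollary~\ref{Damian3}'' and the later sentence ``this is exactly what Corollary~\ref{Damian3} is designed to supply'' are misplaced. Corollary~\ref{Damian3} is not a tool for bounding $|H_R^m(G,S)|$; it \emph{is} (over $\hat{\mathbb{Z}}$) the very statement you are proving. The bound you need is the elementary one you already wrote down, and it requires no citation beyond the definition of type $\FP_n$. Remove those references and the proof stands on its own.
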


Next, we show that the class of groups of type $\PFP_n$ is closed under commensurability and finite direct products. The proof of the next theorem can be found in Proposition \ref{prop:PFPncommensurability} and Proposition \ref{prop:PFPndirectprod}.

\begin{introthm}[Closure properties of groups of type $\PFP_n$]
%The class of groups of type $\PFP_n$ is closed under:
\begin{enumerate}[(i)]
\item Let $G$ be a profinite group and let $H$ an open subgroup of $G$. Then $G$ is of type $\PFP_n$ if and only if $H$ is.% taking open subgroup and open overgroups,
\item Let $G_1$ and $G_2$ be profinite groups of type $\PFP_n$ and $\PFP_m$ over $R$, respectively. Then $G_1\times G_2$ is of type $\PFP_{\min(n,m)}$ over $R$.
\end{enumerate}
\end{introthm}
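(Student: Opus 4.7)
The strategy for part (i) is to transfer projective resolutions between $R\llbracket H\rrbracket$ and $R\llbracket G\rrbracket$ along restriction and induction. Since $H$ is open in $G$, the ring $R\llbracket G\rrbracket$ is free of finite rank $[G:H]$ over $R\llbracket H\rrbracket$, so restriction and induction are both exact and send projectives to projectives.

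For the forward direction $G\in\PFP_n \Rightarrow H\in\PFP_n$, I take a projective resolution $P_\bullet \to R$ over $R\llbracket G\rrbracket$ whose first $n+1$ terms are PFG and restrict. The key technical input, which I expect to prove as a lemma in Section~\ref{sec:PFP_modules}, is that restriction to an open subgroup preserves the PFG property: a positively-finite generating set of $M$ over $R\llbracket G\rrbracket$, combined with a fixed finite generating set of $R\llbracket G\rrbracket$ over $R\llbracket H\rrbracket$, yields a positively-finite generating set of $M$ over $R\llbracket H\rrbracket$ of size $[G:H]$ times larger.

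For the reverse direction, simply inducing a resolution of $R$ from $H$ produces a resolution of $\Ind_H^G R \cong R\llbracket G/H\rrbracket$, not of $R$, and in general $R$ need not be a direct summand of $R\llbracket G/H\rrbracket$ since $[G:H]$ may not be invertible in $R$. To sidestep this I plan to invoke the cohomological characterisation (Theorem~\ref{extconditionintro}) and reduce to showing polynomial growth in $k$ of $\sum_{S \in \mathcal{S}^{R\llbracket G\rrbracket}_k}(|H^m(G,S)|-1)$ for $m\leq n$. The Eckmann--Shapiro isomorphism $H^m(G,\Ind_H^G V)\cong H^m(H,V)$, the corestriction-restriction transfer identity (whose composition is multiplication by $[G:H]$), and a Mackey/Clifford decomposition of $\Res_H^G S$ into at most $[G:H]$ irreducible $R\llbracket H\rrbracket$-constituents of bounded order together let me bound the $G$-sum above by a polynomial in the corresponding $H$-sum, which is polynomial by hypothesis. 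This is the most delicate step and is the main obstacle.

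For part (ii), the plan is the standard tensor product of resolutions. Let $P_\bullet \to R$ and $Q_\bullet \to R$ be projective resolutions over $R\llbracket G_1\rrbracket$ and $R\llbracket G_2\rrbracket$ with PFG terms in degrees up to $n$ and $m$ respectively. Using $R\llbracket G_1\times G_2\rrbracket \cong R\llbracket G_1\rrbracket\,\hat\otimes_R\,R\llbracket G_2\rrbracket$, the total complex $P_\bullet\,\hat\otimes_R\,Q_\bullet$ is a projective resolution of $R$ as an $R\llbracket G_1\times G_2\rrbracket$-module whose degree-$k$ term is $\bigoplus_{i+j=k}P_i\,\hat\otimes_R\,Q_j$. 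Invoking an earlier result that the completed tensor product of a PFG $R\llbracket G_1\rrbracket$-module with a PFG $R\llbracket G_2\rrbracket$-module is PFG over $R\llbracket G_1\times G_2\rrbracket$, and observing that for $k\leq\min(n,m)$ every summand of the degree-$k$ term has $i\leq n$ and $j\leq m$, that term is a finite direct sum of PFG modules and hence itself PFG.
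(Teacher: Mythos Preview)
Your argument for part (ii) matches the paper's exactly: take the tensor product of partial resolutions, and reduce to showing that the tensor of two PFG projectives is PFG. Note however that this last fact is not something already available earlier in the paper---it is precisely the content of the proof of Proposition~\ref{prop:PFPndirectprod}, where it is established by counting absolutely irreducible representations of $G_1\times G_2$ as tensor products (following \cite[Theorem~6.4]{KV}). You should say how you intend to prove it rather than simply invoking it.

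For part (i), your forward direction agrees with the paper. For the reverse direction you are working much too hard, and your proposed transfer argument has a genuine gap: when the characteristic $p$ of $S$ divides $[G:H]$, the relation $\mathrm{cor}\circ\mathrm{res}=[G:H]$ tells you nothing about $\ker(\mathrm{res})$, so you do not get a bound on $|H^m(G,S)|$ in terms of $|H^m(H,\Res_H^G S)|$ this way. The paper avoids all of this by strengthening your restriction lemma to an \emph{equivalence}: an $R\llbracket G\rrbracket$-module $M$ is PFG if and only if $\Res^G_H M$ is PFG. The direction you were missing is the easy one (any $R\llbracket H\rrbracket$-generating set is an $R\llbracket G\rrbracket$-generating set). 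Once you have this two-sided statement, the reverse implication $H\in\PFP_n\Rightarrow G\in\PFP_n$ follows by the standard Brown-style argument \cite[VIII, Proposition~5.1]{Brown}: build a resolution over $R\llbracket G\rrbracket$ step by step using PFG projective covers, and at each stage restrict to $H$ and apply Schanuel's lemma (Proposition~\ref{Schanuel}) against the given $\PFP_n$ resolution over $R\llbracket H\rrbracket$ to see that the successive kernels are PFG over $R\llbracket H\rrbracket$, hence over $R\llbracket G\rrbracket$. No Shapiro, transfer, or Clifford theory is required.
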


Up to this point, property $\PFP_n$ seems to enjoy several nice properties, but it still remains mysterious. To address this shortcoming, in Section \ref{sec:APFG} and \ref{PFPvsPFP2} we start examining properties $\PFP_1$ and $\PFP_2$ in more detail. 

In Section \ref{sec:APFG} we show that APFG implies $\PFP_1$ (Lemma \ref{APFGPFP1}).  Using this, we can give an example of a group of type $\PFP_1$ that is not PFG (see Example \ref{ex:damian}). %Additionally we settle the second part of \cite[Question 1.2]{KV}: we prove

%\begin{introprop}
%Let $G$ be a finitely presented profinite group. If $G$ is PFG, then it is PFR.
%\end{introprop}

In Section \ref{PFPvsPFP2}, we investigate the relation between PFR and type $\PFP_2$. We show in Lemma \ref{pfp2} that PFR implies $\PFP_2$, and in Proposition \ref{PFP2=absubgps} that type $\PFP_2$ can be detected by considering the minimal presentation of a group.%: we show that for $G$ a profinite group of type $\PFP_1$, and $\tilde{G}\to G$ the universal Frattini cover of $G$, $G$ has type $\PFP_2$ if and only if $R = \ker(\tilde{G} \to G)$ has polynomial maximal $\tilde{G}$-stable abelian subgroup growth.

In the case of modules for abstract groups, or the usual (non-probabilistic) definition of finite generation for profinite modules, we have the following nice property: if $M$ is an $H$-module with $H \leq G$, $M$ is finitely generated if and only if $\Ind^G_H M$ is. But it is not hard to show that an analogous property fails for positive finite generation. For example, $\hat{\mathbb{Z}}$ is PFG, but by Proposition~\ref{prop:free_not_PFP}, $\Ind_1^{F_n} \hat{\mathbb{Z}}$ is not PFG, for $F_n$ the free profinite group on $n>1$ generators. We confront this problem in Section \ref{sec:relpfpn}. Specifically, we define a relative version of type $\PFP_n$. Given a profinite group $G$ and a closed normal subgroup $H$ of $G$, we say that $H$ has \emph{relative type $\PFP_n$ in $G$} if all PFG projective $R \llbracket G/H \rrbracket$-modules have type $\PFP_n$ over $R \llbracket G \rrbracket$. Note that in the analogous type $\FP_n$ case, relative type $\FP_n$ is equivalent to type $\FP_n$.

\begin{introthm} Suppose that $H$ has relative type $\PFP_m$ in $G$ over $R$.
\begin{enumerate}[(i)]
\item If $G$ has type $\PFP_n$ over $R$, then $G/H$ has type $\PFP_{\min(m+1,n)}$ over $R$.
\item If $G/H$ has type $\PFP_n$ over $R$, then $G$ has type $\PFP_{\min(m,n)}$ over $R$.
\end{enumerate}
\end{introthm}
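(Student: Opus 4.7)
The plan is to prove (ii) via an iterated horseshoe / double-complex construction, and (i) by an induction tracking the $\PFP_\bullet$-index of successive kernels through the closure of $\PFP_\bullet$ under short exact sequences.

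For (ii): Take a projective resolution $P_\bullet \to R$ of $R$ over $R\llbracket G/H \rrbracket$ with $P_0, \ldots, P_n$ PFG projective. The relative $\PFP_m$ hypothesis yields, for each $i \leq n$, a projective resolution $Q_{i,\bullet} \to P_i$ over $R\llbracket G \rrbracket$ with $Q_{i,0}, \ldots, Q_{i,m}$ PFG. Form the double complex $(Q_{i,j})$ and take the total complex $T_\bullet := \mathrm{Tot}(Q_{\bullet,\bullet})$. A standard spectral sequence argument (or an iterated horseshoe lemma) shows $T_\bullet \to R$ is a projective resolution over $R\llbracket G \rrbracket$; for $k \leq \min(m,n)$, the term $T_k = \bigoplus_{i+j=k} Q_{i,j}$ is a finite direct sum of PFG modules, hence PFG by the closure of PFG under finite direct sums. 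Thus $G$ is of type $\PFP_{\min(m,n)}$ over $R$.

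For (i): Induct on $k \leq \min(m+1,n)$, constructing at stage $k$ a partial projective resolution $P_0, \ldots, P_{k-1}$ of $R$ over $R\llbracket G/H \rrbracket$ with each $P_i$ PFG projective. The base case $k=0$ is immediate: $R$ is PFG over $R\llbracket G \rrbracket$ by the $\PFP_0$ part of the hypothesis on $G$, and since the $R\llbracket G \rrbracket$-action on $R$ factors through $R\llbracket G/H \rrbracket$, submodule generation is the same over either ring, so $R$ is also PFG over $R\llbracket G/H \rrbracket$.

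For the inductive step, set $K_{k-1} := \ker(P_{k-1} \to P_{k-2})$ (with $P_{-1} := R$); we need $K_{k-1}$ PFG over $R\llbracket G/H \rrbracket$ in order to cover it by a PFG projective $P_k$. Inflating to $R\llbracket G \rrbracket$-modules, we have the exact sequence
\[0 \to K_{k-1} \to P_{k-1} \to \ldots \to P_0 \to R \to 0,\]
in which each inflated $P_i$ has type $\PFP_m$ over $R\llbracket G \rrbracket$ (by relative $\PFP_m$) and $R$ has type $\PFP_n$ over $R\llbracket G \rrbracket$ (by $\PFP_n$ of $G$). Splitting into the short exact sequences $0 \to K_i \to P_i \to K_{i-1} \to 0$ (for $i = 0, \ldots, k-1$, with $K_{-1} := R$) and iteratively applying the closure property --- that in $0 \to A \to B \to C \to 0$ with $B$ of type $\PFP_s$ and $C$ of type $\PFP_{s+1}$, $A$ is of type $\PFP_s$ --- one computes $K_{k-1}$ is of type $\PFP_{\min(m-k+1,\, n-k)}$ over $R\llbracket G \rrbracket$. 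Since $k \leq \min(m+1,n)$, this index is non-negative, so $K_{k-1}$ is PFG over $R\llbracket G \rrbracket$; as the action again factors through $R\llbracket G/H \rrbracket$, $K_{k-1}$ is also PFG over $R\llbracket G/H \rrbracket$, completing the induction.

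The main obstacle lies in (i): verifying the closure property of type $\PFP_\bullet$ under short exact sequences with the precise index behaviour needed, the probabilistic analog of Bieri's closure lemma for $\FP_n$. This is expected to follow from the Ext-growth characterisation (Theorem \ref{extcondition}) via the long exact sequence of Ext combined with a careful tracking of polynomial growth, and would be among the fundamental tools developed in Section \ref{sec:PFP_modules}.
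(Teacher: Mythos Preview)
Your argument is correct, and for part (i) it is essentially the paper's argument unrolled. The paper proves a more general module-level statement (Theorem \ref{relPFPn}) by a clean induction on $n$: take a single PFG projective $R\llbracket G/H\rrbracket$-module $P$ surjecting onto $M$, apply Proposition \ref{moduletypes}(ii) to the kernel, and invoke the inductive hypothesis on that kernel. Your explicit tracking of the $\PFP$-index of the successive kernels $K_{k-1}$ is the same computation, just carried out by hand rather than packaged as an induction on a statement quantified over all $R\llbracket G/H\rrbracket$-modules. One correction worth noting: the closure property you flag as the main obstacle is Proposition \ref{moduletypes}, and it is \emph{not} proved via the $\Ext$-growth characterisation and long exact sequences, but directly and elementarily from Schanuel's lemma, the mapping cone, and the horseshoe lemma. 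Your proposed $\Ext$ route would work too, but it is unnecessary.

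For part (ii) your approach genuinely differs from the paper's. You invoke the classical total-complex/spectral-sequence argument; the paper instead uses exactly the same elementary inductive scheme as in (i), now via Proposition \ref{moduletypes}(i): cover $M$ by a PFG projective $R\llbracket G/H\rrbracket$-module $P$, apply the inductive hypothesis to the kernel (which has type $\PFP_{n-1}$ over $R\llbracket G/H\rrbracket$), then use that $P$ has type $\PFP_m$ over $R\llbracket G\rrbracket$. The paper explicitly remarks that this avoids spectral sequences and thereby gives a new, elementary proof even of the classical $\FP_n$ analogue. Your route is valid, but the phrase ``form the double complex $(Q_{i,j})$'' hides a real technical point: lifts of $P_i \to P_{i-1}$ to chain maps $Q_{i,\bullet} \to Q_{i-1,\bullet}$ need not compose to zero, so one must either pass to a twisted complex with higher homotopies or carry out the inductive ``iterated horseshoe'' construction carefully. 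The paper's approach sidesteps this entirely, is shorter, and treats (i) and (ii) uniformly.
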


This is shown in Corollary \ref{relPFPn2}. Incidentally, the proof of the previous theorem also works in showing the corresponding result for abstract groups and our proof does not depend on the usual spectral sequence argument, which may have some independent interest.

We finish with Section \ref{examples}, where we produce some novel examples to distinguish some of the aforementioned classes of groups. First, for any prime $p$, we give examples of type $\PFP_1$ groups $G$ over $\mathbb{Z}_p$ such that the group ring $\mathbb{Z}_p\llbracket G \rrbracket$ is not PFG (see Proposition \ref{prop:finsimplegroup}). We would like to give examples of this behaviour over $\hat{\mathbb{Z}}$, to distinguish between the classes of groups with UBERG and type $\PFP_1$ over $\hat{\mathbb{Z}}$, but for now the question remains open. Such examples cannot appear among pronilpotent groups (see Proposition \ref{prop:pronilp} and the related Question \ref{quest:prosoluble} about the prosoluble case).

Although, trivially, if a module has type $\PFP_n$ it has type $\FP_n$, we show that the converse is not true, by showing in Section \ref{FP1vsPFP1} that the free profinite group on $m$ generators, $1 < m < \infty$, has type $\FP_1$ but not type $\PFP_1$ over $\hat{\mathbb{Z}}$.

Finally, in Proposition \ref{FPnnotn+1}, we distinguish the classes of groups of type $\PFP_n$ over $\hat{\mathbb{Z}}$ by constructing, for each $n \geq 0$, a prosoluble group of type $\PFP_n$ but not $\PFP_{n+1}$.

\section{Preliminaries and notation}

We state now some conventions which will be in force for the rest of this article. All subgroups and submodules will be assumed to be closed. Generation will always be intended in the topological sense. All homomorphisms will be continuous. Modules will be assumed to be left modules.

%\subsection{Profinite rings and modules}
%A \emph{profinite} ring $R$ is an inverse limit of an inverse system of finite rings. A \emph{profinite $R$-module} is defined analogously. 

\subsection{Homology theory for profinite groups}

%Let $R$ be a profinite ring and $M$ be a profinite $R$-module. A profinite group is said to have \emph{type $\FP_n$} if there is a resolution $$\ldots \to P_n \ldots \to P_1 \to P_0 \to M \to 0$$ with $P_0,\ldots,P_n$ finitely generated projective $R$-modules.

In the course of this work, we will need the usual `homological lemmas' for profinite groups, such as snake lemma, horseshoe lemma, Schanuel's lemma, Lyndon-Hochschild-Serre spectral sequence, the long exact sequence in cohomology, and Ext groups -- see for instance \cite{RZ}. Other tools such as the mapping cone construction, valid in all abelian categories, can be found in \cite{Weibel} for example.

\subsection{Haar measure}

For a profinite group $G$, we denote by $\mu_G$ the (left) \emph{Haar measure} of $G$, see \cite[Chapter 11]{LS} for basic properties. We will always consider the \emph{normalised} Haar measure, in this way we can turn a profinite group into a probability space.

For a profinite group $G$, the direct power $G^k$ can be viewed as a profinite group, and as such it supports a Haar measure. For $g_1,\ldots,g_k \in G$, we denote by $\langle g_1,\ldots,g_k \rangle$ the closed subgroup of $G$ they generate.

Now we define the set
\[
  X(G,k) = \{(g_1,\ldots,g_k)\in G^k \mid \langle g_1\ldots,g_k\rangle = G\},
\] 
so that $P(G,k) = \mu_{G^k}(X(G,k))$ is non-zero for some $k$ if and only if $G$ is PFG.

\begin{lem}[{\cite[Lemma 11.1.1]{LS}}]\label{lem:11.1.1}
Let $G$ be a profinite group, let $K$ be a closed normal subgroup of $G$ and $\pi:G\to G/K$ be the natural projection. If $X$ is a closed subset of $G$, then $\mu_{G/K}(\pi(X))\ge \mu_G(X)$; if $Y$ is a closed subset of $G/K$, then $\mu_G(\pi^{-1}(Y)) = \mu_{G/K}(Y)$. 
\end{lem}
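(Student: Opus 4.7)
The plan is to deduce both parts from the uniqueness of (normalised) Haar measure, by pushing $\mu_G$ forward along $\pi$. First I would verify that the pushforward measure $\nu = \pi_{\ast}\mu_G$, defined by $\nu(Y) = \mu_G(\pi^{-1}(Y))$ for Borel $Y \subseteq G/K$, is a normalised left-invariant Haar measure on $G/K$. Normalisation is immediate since $\nu(G/K) = \mu_G(G) = 1$. For left invariance, given $aK \in G/K$ and a Borel set $Y$, the set $\pi^{-1}(aK \cdot Y)$ equals $a \cdot \pi^{-1}(Y)$, so left invariance of $\mu_G$ transfers to $\nu$. By uniqueness of normalised Haar measure on the profinite group $G/K$, $\nu = \mu_{G/K}$, which is exactly the equality $\mu_G(\pi^{-1}(Y)) = \mu_{G/K}(Y)$.

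For the inequality, the key observation is that for $X \subseteq G$ closed, $\pi(X)$ is closed in $G/K$, hence Borel. Indeed, $G$ is compact, so $X$ is compact, $\pi$ is continuous, so $\pi(X)$ is compact, and $G/K$ is Hausdorff (since $K$ is closed), so compact subsets are closed. Now the trivial inclusion $X \subseteq \pi^{-1}(\pi(X))$ together with monotonicity of $\mu_G$ and the equality established above yields
\[
\mu_G(X) \;\leq\; \mu_G\!\left(\pi^{-1}(\pi(X))\right) \;=\; \mu_{G/K}(\pi(X)),
\]
as required.

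The only genuine subtlety is the measurability of $\pi(X)$, which is why the hypothesis that $X$ be closed (rather than merely Borel) is used; for arbitrary Borel sets the image need not be Borel, so the closedness hypothesis is doing real work. Everything else is routine measure theory once the uniqueness of Haar measure on profinite groups is in hand.
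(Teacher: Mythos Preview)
Your argument is correct and is the standard one: identify the pushforward $\pi_\ast\mu_G$ with $\mu_{G/K}$ via uniqueness of normalised Haar measure, then deduce the inequality from $X\subseteq\pi^{-1}(\pi(X))$ after checking $\pi(X)$ is closed. The paper does not supply its own proof of this lemma; it simply quotes it from \cite[Lemma 11.1.1]{LS}, so there is nothing to compare against beyond noting that your proof is exactly the expected one.
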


\subsection{PFG, PFR and more}\label{sec:pfgpfrmore}

We say that a profinite group $G$ is \emph{PFG} if there is a positive integer $k$ such that the probability of $k$ Haar-random elements of $G$ generating the whole group is positive. This condition has been studied extensively and here we only mention the Mann-Shalev theorem \cite[Theorem 4]{MS}: a profinite group $G$ is PFG if and only if it has polynomial maximal subgroup growth.

In the spirit of the Mann-Shalev theorem, the authors of \cite{KV} study a related property called PFR. We list below some of the conditions considered there that we will need; the interested reader may check \cite{KV} for more details.

A profinite group $G$:
\begin{enumerate}[(i)]
\item is \emph{PFR} if it is finitely generated, and for every epimorphism $f:H \to G$ with $H$ finitely generated, the kernel of $f$ is positively finitely normally generated in $H$;
\item has PMEG, if the number of isomorphism classes of minimal extensions of $G$ of order $n$ grows polynomially in $n$;
\item has UBERG, if the number of irreducible $\hat{\mathbb{Z}}\llbracket G \rrbracket$-modules of order $n$ grows polynomially in $n$.
\end{enumerate}

\begin{prop}[\cite{KV}]
\label{KV}
UBERG is equivalent to the group algebra $\hat{\mathbb{Z}}\llbracket G \rrbracket$ being PFG, for all profinite groups $G$. PFR and PMEG are equivalent for $G$ finitely generated. All the conditions are equivalent for $G$ finitely presented.
\end{prop}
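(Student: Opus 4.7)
The three equivalences all fit a common Mann-Shalev-style template: positive random generation of an object is controlled by polynomial growth of its maximal ``covering'' subobjects. The plan is to prove a module version of Mann-Shalev first, derive (1) from it, treat (2) by an analogous normal-generation argument, and close the loop in (3) using that finite presentability bounds second cohomology.

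For (1), I would first establish that a profinite $\hat{\mathbb{Z}}\llbracket G \rrbracket$-module $M$ is PFG if and only if the number of maximal submodules of $M$ of index at most $n$ grows polynomially in $n$. The forward direction is a union bound: if $P(M,k)$ denotes the probability that $k$ Haar-random elements of $M$ generate it, then
\[
 1-P(M,k) \le \sum_{N \text{ maximal in } M} [M:N]^{-k},
\]
which is finite (and can be made $<1$) for $k$ large enough under polynomial growth. The reverse direction mimics Mann-Shalev: super-polynomial growth in the number of maximal submodules forces almost every $k$-tuple to miss some maximal submodule. Specializing to $M=\hat{\mathbb{Z}}\llbracket G \rrbracket$, maximal submodules are maximal left ideals, and the canonical bijection $\Hom_{\hat{\mathbb{Z}}\llbracket G \rrbracket}(\hat{\mathbb{Z}}\llbracket G \rrbracket,S)\cong S$ shows that each irreducible $S$ contributes $(|S|-1)/(|\End(S)|-1)\le |S|$ maximal submodules with quotient $S$. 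Hence polynomial growth of maximal submodules by index is equivalent to polynomial growth of irreducibles by order --- that is, UBERG.

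For (2), assume $G$ is finitely generated, say by $d$ elements. The same template applies to normal generation: for any finitely generated $H$ with epimorphism $H\twoheadrightarrow G$ and kernel $K$, the probability that $k$ random elements normally generate $K$ is controlled by a union bound over the $H$-invariant maximal proper subgroups of $K$, whose corresponding quotients of $H$ are precisely the minimal extensions of $G$. Uniformity over the choice of $H$ is handled by lifting everything to the free profinite group on $d$ generators, so polynomial growth of these maximal subgroups by index becomes equivalent to PMEG, yielding PFR $\Leftrightarrow$ PMEG.

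For (3), assume $G$ is finitely presented. The implications already in hand leave UBERG $\Rightarrow$ PMEG as the key step. A minimal extension of $G$ of order $n$ is classified by a pair $(S,\xi)$, where $S$ is a simple $\hat{\mathbb{Z}}\llbracket G \rrbracket$-module with $|S|$ dividing $n$ and $\xi\in H^2(G,S)\setminus\{0\}$, taken modulo $\mathrm{Aut}(S)$. Fixing a presentation $1\to R\to F\to G\to 1$ with $F$ free of rank $d$ and $R$ normally generated by $r$ elements, the five-term exact sequence identifies $H^2(G,S)$ with a subquotient of $\Hom_F(R^{\mathrm{ab}},S)$, yielding $|H^2(G,S)|\le |S|^r$. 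Combined with the UBERG polynomial bound on $|\{S:|S|=n\}|$, this gives a polynomial bound on the number of minimal extensions of order $n$. The main obstacle is the cohomological bound: making the uniform-in-$S$ control of $H^2(G,S)$ rigorous across all simple profinite modules, which requires a careful continuity/compactness argument and the correct notion of ``number of relators'' for a profinite presentation.
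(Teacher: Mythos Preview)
The paper does not give its own proof of this proposition; it is quoted from \cite{KV} with a one-line remark that the first equivalence extends verbatim to arbitrary profinite $G$. Your sketches for (1) and (2) follow the same Mann--Shalev template used in \cite{KV}, and indeed your module-level statement in (1) is precisely what this paper later records as Proposition~\ref{prop:PMSMG}.

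Your argument for (3), however, has a real gap. You write that a minimal extension of $G$ of order $n$ is classified by a pair $(S,\xi)$ with $S$ a simple $\hat{\mathbb{Z}}\llbracket G\rrbracket$-module and $\xi\in H^2(G,S)$. This parametrises only the minimal extensions with \emph{abelian} kernel. In a minimal extension $1\to K\to E\to G\to 1$ the kernel $K$ is a minimal normal subgroup of $E$, hence characteristically simple; when $K$ is non-abelian it is a direct power $T^m$ of a non-abelian finite simple group, and such extensions are not governed by $H^2(G,-)$ at all. The finite-presentability bound on $H^2$ therefore says nothing about them, and your count of minimal extensions is incomplete. In \cite{KV} this non-abelian case is treated separately (the paper cites \cite[Proposition~5.2, Proposition~5.4]{KV} in Section~\ref{PFPvsPFP2}): one shows that for $G$ finitely generated the number of non-abelian minimal extensions of given degree is already polynomially bounded, with no UBERG or presentability hypothesis needed. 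That is the missing ingredient. By contrast, the issue you flag as the ``main obstacle'' --- a uniform-in-$S$ bound on $|H^2(G,S)|$ --- is not an obstacle at all: it is exactly Lubotzky's characterisation of profinite finite presentability \cite[Theorem~0.3]{Lubotzky}, and no further continuity or compactness argument is required.
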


Note that the equivalence of UBERG to $\hat{\mathbb{Z}}\llbracket G \rrbracket$ being PFG is only stated in \cite{KV} for finitely generated groups, but the proof for general groups goes through without change.

%\begin{rem}
%Applying this proposition to the trivial group, which is finitely presented and certainly has UBERG, we deduce that the number of minimal extensions of it -- that is, the number of isomorphism classes of finite simple groups of order $k$ -- grows polynomially in $k$.
%\end{rem}

We will see later that there are groups with UBERG which are not PFG; the question of whether there are non-finitely generated groups with UBERG remains open. But we do have the following result.

\begin{prop}
\label{UBERG=cb}
Suppose $G$ has UBERG, then it is countably based.
\end{prop}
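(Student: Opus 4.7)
The plan is to exhibit a countable family of open normal subgroups of $G$ with trivial intersection; by the standard compactness argument for profinite groups, the finite intersections from such a family form a basis of open neighbourhoods of the identity, making $G$ countably based.

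Let $\mathcal{S}$ be a set of representatives for the isomorphism classes of finite irreducible $\hat{\mathbb{Z}}\llbracket G \rrbracket$-modules. By UBERG, the subset $\mathcal{S}_k \subseteq \mathcal{S}$ of modules of order $k$ is finite for every $k$, so $\mathcal{S} = \bigcup_k \mathcal{S}_k$ is countable. For each $S \in \mathcal{S}$ the continuous action $G \to \operatorname{End}(S)$ (where $S$ is finite, hence discrete) has an open normal kernel $K_S \trianglelefteq G$, and the family $\{K_S : S \in \mathcal{S}\}$ is a countable collection of open normal subgroups.

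The key step is then to show that $\bigcap_{S \in \mathcal{S}} K_S = 1$. Given $1 \neq g \in G$, I would pick an open normal subgroup $N$ of $G$ with $g \notin N$, fix any prime $p$, and consider the regular $\mathbb{F}_p[G/N]$-module $M$; since $g$ permutes the standard basis of $M$ non-trivially, it cannot act trivially on every composition factor of $M$. Thus $g$ acts non-trivially on some simple $\mathbb{F}_p[G/N]$-module $S$, which via the quotient map is a finite irreducible $\hat{\mathbb{Z}}\llbracket G \rrbracket$-module with $g \notin K_S$. Finally, compactness of $G$ gives that any open neighbourhood of $1$ contains some finite intersection $K_{S_1} \cap \cdots \cap K_{S_n}$ (the closed complement would otherwise meet every $K_S$, contradicting trivial intersection), so these finite intersections constitute a countable base of open normal subgroups.

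There is no serious obstacle, as the argument is essentially a separation-of-points statement dressed in compactness; the only mildly subtle point is remembering that individual irreducible constituents may fail to be faithful, and it is the \emph{collective} kernel of all finite irreducibles that we use to separate points.
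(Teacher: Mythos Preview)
Your overall strategy is sound and in fact more explicit than the paper's, but there is one genuine error. You write ``fix any prime $p$'' and then assert that because $g$ permutes the standard basis of $M=\mathbb{F}_p[G/N]$ non-trivially, it cannot act trivially on every composition factor. This fails when $p$ divides the order of $gN$: for instance, if $G/N\cong\mathbb{Z}/p\mathbb{Z}$ with $g$ mapping to a generator, then $\mathbb{F}_p[G/N]$ has only the trivial module as composition factor, and $g$ acts trivially on each of them (the action of $g$ on $M$ is a single unipotent Jordan block). The fix is immediate: choose $p$ coprime to $|G/N|$, so that Maschke's theorem makes $\mathbb{F}_p[G/N]$ semisimple, and then a non-trivial action on $M$ forces a non-trivial action on some irreducible summand. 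With that correction your argument goes through.

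By way of comparison, the paper argues the contrapositive in one line: if $G$ is not countably based, some index $n$ admits infinitely many open normal subgroups, and over any prime $p>n$ these yield infinitely many irreducible $\mathbb{F}_p\llbracket G\rrbracket$-modules of dimension at most $n$, violating UBERG. Note that the paper also silently uses the coprimality of $p$ and $n$ (via $p>n$) for the same reason you need it. Your route is a direct construction of a countable base rather than a counting contradiction; it is a little longer but arguably more informative, since it identifies the base explicitly as finite intersections of the kernels $K_S$.
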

\begin{proof}
Suppose by contradiction that $G$ is not countably based. Then, for some index $n$, there are infinitely many open normal subgroups of index $n$ in $G$ and so $G$ has infinitely many irreducible representations of dimension at most $n$ (over fields of characteristic $p > n$).
%\comm{
%Suppose $G$ is not countably based; we will show it has uncountably many (isomorphism classes of) irreducible modules.
%
%Write $G$ as an inverse limit of profinite quotients $G_\alpha$, such that $G_0 = 1$, $\ker(G_\alpha \to G_{\alpha-1})$ is finite for $\alpha$ a successor, and $G_\alpha = \varprojlim_{\beta < \alpha} G_\beta$ for $\alpha$ a limit. By \cite[Theorem 2.6.4]{RZ}, such a sequence exists with $G = G_{w(G)}$, where $w(G)$ is the weight of $G$ (defined in \cite[Section 2.6]{RZ}). In particular, $w(G)$ is uncountable.
%
%Now the set of successor ordinals less than $w(G)$ is also uncountable. Thus it suffices to show that for each successor $\alpha$, $G_\alpha$ has a irreducible module which is not the restriction of a $G_{\alpha-1}$-module. Pick an open normal subgroup $H$ of $G_\alpha$ which has trivial intersection with $\ker(G_\alpha \to G_{\alpha-1})$. Then it is enough to find a irreducible $G_\alpha/H$-module on which the $\ker(G_\alpha \to G_{\alpha-1})$-action is non-trivial; this exists by standard techniques of representation theory of finite groups.
%}
\end{proof}

\subsection{PFG modules}

For a module $M$, as for a profinite group, the direct power $M^k$ can be viewed as an abelian profinite group, and supports a Haar measure which we will denote by $\mu_{M^k}$. For $m_1,\ldots,m_k \in M$, we denote by $\langle m_1,\ldots,m_k \rangle_R$ the closed submodule of $M$ they generate.

As for groups, for a positive integer $k$, we define the set
\[
  X_R(M,k) = \{(m_1,\ldots,m_k)\in M^k \mid \langle m_1\ldots,m_k\rangle_R = M\}.
\] 
and $P_R(M,k) = \mu_{M^k}(X_R(M,k))$.

\begin{defn}
A profinite module $M$ is said to be \emph{PFG} if there is some $k\in \mathbb{N}$ such that $P_R(M,k) > 0$. 
\end{defn}

In \cite[Proposition 11.2.1]{LS} it is shown that the class of PFG groups is closed under quotients and extensions. The same is true for PFG modules with an analogous proof which we omit.

\begin{lem}\label{lem:PFGquotext}
PFG modules are closed under quotients and extensions.
\end{lem}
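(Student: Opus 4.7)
My plan is to imitate the standard proof for the PFG group analogue \cite[Proposition 11.2.1]{LS}, since the measure-theoretic content of Lemma~\ref{lem:11.1.1} applies equally well to profinite abelian groups (hence to the underlying additive groups of profinite modules); the $R$-module structure plays no role for the Haar-measure bookkeeping.

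For the quotient part, given a surjection $\pi\colon M\twoheadrightarrow N$ with $P_R(M,k)>0$, I would first observe that $X_R(M,k)$ is closed in $M^k$: its complement is the union over proper open submodules $L\le M$ of the clopen sets $L^k$, because the closed submodule $\langle m_1,\dots,m_k\rangle_R$ equals $M$ iff no proper open submodule contains all the $m_i$. Since $\pi^k$ carries generating tuples to generating tuples, $\pi^k(X_R(M,k))\subseteq X_R(N,k)$. Applying Lemma~\ref{lem:11.1.1} to $M^k\twoheadrightarrow N^k$ then gives
\[
P_R(N,k)\ge \mu_{N^k}\bigl(\pi^k(X_R(M,k))\bigr)\ge \mu_{M^k}(X_R(M,k))=P_R(M,k)>0.
\]

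For the extension part, given $0\to M'\to M\xrightarrow{\pi} M''\to 0$ with $P_R(M',j),P_R(M'',k)>0$, I plan to show that $P_R(M,j+k)\ge P_R(M',j)\cdot P_R(M'',k)$. Sample $(m_1,\dots,m_{j+k})$ Haar-uniformly in $M^{j+k}$ and let $A$ be the event that $\pi(m_{j+1}),\dots,\pi(m_{j+k})$ generate $M''$; by the second clause of Lemma~\ref{lem:11.1.1}, $\Pr(A)=P_R(M'',k)$. On $A$, set $N:=\langle m_{j+1},\dots,m_{j+k}\rangle_R$, so $N+M'=M$ and hence $M/N\cong M'/(N\cap M')$ is a quotient of $M'$. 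Conditionally on $(m_{j+1},\dots,m_{j+k})$, the images of the (independent) Haar-uniform $m_1,\dots,m_j$ in $M/N$ are again Haar-uniform by Lemma~\ref{lem:11.1.1}, so the probability that they generate $M/N$ is $P_R(M/N,j)\ge P_R(M',j)$ by the quotient part already established. Observing that on the intersection of these two events $\langle m_1,\dots,m_{j+k}\rangle_R=M$, and integrating, yields the desired bound.

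The only step I expect to require mild care is the uniformity of the bound $P_R(M/N,j)\ge P_R(M',j)$ over the \emph{random} subgroup $N$: this is handled cleanly by noting that $M/N$ is a quotient of $M'$ for \emph{every} $N$ arising on the event $A$, so the inequality is uniform in $N$ and Fubini then applies without fuss. Everything else is a routine manipulation of Haar measures along the lines of the group case.
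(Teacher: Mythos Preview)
Your proposal is correct and follows precisely the approach the paper indicates: the paper omits the proof entirely, stating only that it is analogous to \cite[Proposition 11.2.1]{LS}, and your argument is exactly that analogue, carried out carefully for modules. The one delicate point you flag---uniformity of $P_R(M/N,j)\ge P_R(M',j)$ over the random $N$---is handled correctly, and the measurability needed for Fubini is unproblematic since the relevant event is $A\cap X_R(M,j+k)$, an intersection of closed sets.
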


\subsection{APFG}
\label{sec:APFGdefn}

It will be useful here to compare our conditions on profinite groups to another condition introduced in \cite{Damian}. Recall that the \emph{augmentation map} $\varepsilon: \hat{\mathbb{Z}}\llbracket G \rrbracket \to \hat{\mathbb{Z}} $ is induced by $\varepsilon(g)=1$, for $g\in G$. Define the \emph{augmentation ideal} as $I_{\hat{\mathbb{Z}}}\llbracket G \rrbracket = \ker \varepsilon$, so we have a short exact sequence 
\begin{equation}\label{eq:augmentation}
0 \to I_{\hat{\mathbb{Z}}}\llbracket G \rrbracket \to \hat{\mathbb{Z}} \llbracket G \rrbracket \to \hat{\mathbb{Z}} \to 0.
\end{equation}

Note that the group $G$ has type $\FP_1$ if and only if its augmentation ideal $I_{\hat{\mathbb{Z}}}\llbracket G \rrbracket$ is finitely generated.
\begin{defn}
A profinite group $G$ is said to be \emph{APFG} if the augmentation ideal $I_{\hat{\mathbb{Z}}}\llbracket G \rrbracket$ is PFG as a $\hat{\mathbb{Z}} \llbracket G \rrbracket$-module. 
\end{defn}

\begin{rem}
There is an error in the statement of \cite[Corollary 2.5]{Damian} (which does not affect the rest of \cite{Damian}): it should say ``$I_{\hat{\mathbb{Z}}}\llbracket G\rrbracket$ is finitely generated as a $\hat{\mathbb{Z}} \llbracket G \rrbracket$-module if and only if there exists $d \in \mathbb{N}$ such that $\delta_G(M)/r_G(M) \leq d$ for any $M \in \mathrm{Irr}(\mathbb{F}_p\llbracket G \rrbracket)$ and $p \in \pi(G)$''.
\end{rem}

Recall that $\hat{\mathbb{Z}} \llbracket G \rrbracket$ is PFG if and only if $G$ has UBERG. We can now state the following fundamental result.

\begin{prop}[{\cite[Theorem 4.4]{Damian}}]\label{prop:damian_thm4.4}
Every PFG group is APFG.
\end{prop}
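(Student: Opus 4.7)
The plan is to deduce this from a polynomial-growth characterisation of positive finite generation for modules, analogous to the Mann-Shalev theorem for groups: a profinite $R$-module $M$ is PFG exactly when the number of maximal open submodules of index at most $n$ grows polynomially in $n$. Granting this, it suffices to bound polynomially the number of maximal $\hat{\mathbb{Z}}\llbracket G\rrbracket$-submodules of $I_{\hat{\mathbb{Z}}}\llbracket G \rrbracket$ of each finite index.

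The first step is to parametrise these submodules cohomologically. A maximal submodule of index $|S|$ corresponds, up to $\operatorname{Aut}(S)$, to a nonzero $\hat{\mathbb{Z}}\llbracket G\rrbracket$-module map to an irreducible $S$. Applying $\Hom_{\hat{\mathbb{Z}}\llbracket G\rrbracket}(-,S)$ to the augmentation sequence (\ref{eq:augmentation}) and using that $\hat{\mathbb{Z}}\llbracket G\rrbracket$ is projective yields the exact sequence
\[
0 \to H^0(G,S) \to S \to \Hom_{\hat{\mathbb{Z}}\llbracket G\rrbracket}(I_{\hat{\mathbb{Z}}}\llbracket G\rrbracket, S) \to H^1(G,S) \to 0,
\]
so the number of maximal submodules of $I_{\hat{\mathbb{Z}}}\llbracket G\rrbracket$ of index $n$ is at most $\sum_{|S|=n}|S|\cdot |H^1(G,S)|/|H^0(G,S)|$, with the sum running over isomorphism classes of irreducibles of order $n$.

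It then remains to verify that, under the PFG hypothesis on $G$, this sum grows polynomially in $n$. The count of isomorphism classes of irreducibles of order $n$ is polynomial because PFG implies UBERG. The key remaining ingredient is a polynomial bound on $\sum_{|S|=n}|H^1(G,S)|$; here I would exploit the standard identification of $H^1(G,S)$ with conjugacy classes of complements of $S$ in the semidirect product $S \rtimes G$, and translate the polynomial maximal-subgroup growth of $G$ (available by Mann-Shalev) into polynomial control of the total complement count summed over $S$ of fixed order. Making this last chain of estimates precise is the main technical obstacle, and is essentially the content of \cite[Theorem 4.4]{Damian}.
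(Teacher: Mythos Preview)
The paper gives no proof here; the proposition is quoted from \cite[Theorem~4.4]{Damian}. Your outline is therefore more than the paper offers, but it contains a circularity that is not just a matter of ``making estimates precise''. You invoke ``PFG implies UBERG'' to control the number of irreducibles $S$ in your sum, yet in this paper that implication is Corollary~\ref{PFGtoUBERG}, which is \emph{deduced from} the present proposition via Proposition~\ref{APFGring}. The circularity cannot be bypassed within your scheme: your own exact sequence shows that for every nontrivial irreducible $S$ one has $|\Hom_{\hat{\mathbb{Z}}\llbracket G\rrbracket}(I_{\hat{\mathbb{Z}}}\llbracket G\rrbracket,S)| = |S|\cdot|H^1(G,S)| \geq |S| \geq 2$, so every such $S$ already contributes a maximal submodule of $I_{\hat{\mathbb{Z}}}\llbracket G\rrbracket$. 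Hence polynomial maximal submodule growth of $I_{\hat{\mathbb{Z}}}\llbracket G\rrbracket$ forces UBERG outright, and your upper bound $\sum_{|S|=n}|S|\cdot|H^1(G,S)|$ cannot be shown to be polynomial without first establishing UBERG by some independent route.

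Damian's actual argument, whose ingredients surface in the discussion around Corollary~\ref{Damian3}, is organised differently. Rather than bounding $\sum_S|\Hom(I,S)|$ over all irreducibles, one uses the explicit product formula for $P_{\hat{\mathbb{Z}}\llbracket G\rrbracket}(I_{\hat{\mathbb{Z}}}\llbracket G\rrbracket,k)$ together with the identity $h_G^1(M)=\delta_G(M)+h'_G(M)$ from \cite{AG}, and shows directly that the number of irreducibles $M$ with $\delta_G(M)+h'_G(M)\geq 1$ is polynomially bounded, using the polynomial maximal subgroup growth supplied by Mann--Shalev. Your complement-counting idea is in the right spirit, but it has to be carried out inside $G$ (via its chief factors and the quotients $G/C_G(M)$), not across the family of semidirect products $S\rtimes G$ indexed by all irreducibles $S$.
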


It is shown in \cite[Theorem 3]{Damian} that, if $G$ is countably based and has type $\FP_1$ over $\hat{\mathbb{Z}}$, $G$ is APFG if and only if the number of irreducible $\hat{\mathbb{Z}} \llbracket G \rrbracket$-modules has polynomial growth (and, hence, if and only if $G$ has UBERG). Here we may generalise this theorem by dropping the countably based requirement. 

\begin{prop}
\label{APFGring}
Let $G$ be a profinite group. If $G$ is APFG, then $G$ has UBERG. If $G$ has UBERG and type $\FP_1$, then $G$ is APFG.
\end{prop}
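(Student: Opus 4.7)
The proof will hinge on two things: the short exact sequence \eqref{eq:augmentation} relating $\hat{\mathbb{Z}} \llbracket G \rrbracket$, $I_{\hat{\mathbb{Z}}}\llbracket G \rrbracket$ and $\hat{\mathbb{Z}}$, together with the closure of PFG modules under extensions and quotients (Lemma \ref{lem:PFGquotext}).

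For the first implication, the plan is to feed \eqref{eq:augmentation} into Lemma \ref{lem:PFGquotext}. Namely, observe that $\hat{\mathbb{Z}}$ is a cyclic $\hat{\mathbb{Z}}\llbracket G\rrbracket$-module (generated by $1$ via the augmentation), hence trivially PFG. If $I_{\hat{\mathbb{Z}}}\llbracket G\rrbracket$ is PFG by assumption, then $\hat{\mathbb{Z}}\llbracket G\rrbracket$ is an extension of one PFG module by another, so Lemma \ref{lem:PFGquotext} gives that $\hat{\mathbb{Z}}\llbracket G\rrbracket$ is PFG as a module over itself, which is UBERG by Proposition \ref{KV}.

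For the converse, suppose $G$ has UBERG and type $\FP_1$. By the characterisation of $\FP_1$ recalled just before the definition of APFG, $I_{\hat{\mathbb{Z}}}\llbracket G\rrbracket$ is finitely generated as a $\hat{\mathbb{Z}}\llbracket G\rrbracket$-module, so there is a surjection $\hat{\mathbb{Z}}\llbracket G\rrbracket^n \twoheadrightarrow I_{\hat{\mathbb{Z}}}\llbracket G\rrbracket$ for some $n$. Since $\hat{\mathbb{Z}}\llbracket G\rrbracket$ is PFG by UBERG, a short induction on $n$ using extension-closure (Lemma \ref{lem:PFGquotext}) shows that $\hat{\mathbb{Z}}\llbracket G\rrbracket^n$ is PFG. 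Applying quotient-closure from the same lemma yields that $I_{\hat{\mathbb{Z}}}\llbracket G\rrbracket$ is PFG, i.e., $G$ is APFG.

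There isn't really a hard step here, which is why the proposition follows from the machinery already set up; the main content is genuinely just the observation that the three terms of the augmentation sequence are linked by the extension/quotient closure of the class of PFG modules, with the $\FP_1$ hypothesis supplying the finitely generated free cover needed to bridge from $\hat{\mathbb{Z}}\llbracket G\rrbracket$ being PFG to $I_{\hat{\mathbb{Z}}}\llbracket G\rrbracket$ being PFG in the converse direction.
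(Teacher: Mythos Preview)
Your proof is correct and follows essentially the same route as the paper: both directions use the augmentation sequence \eqref{eq:augmentation} together with closure of PFG modules under extensions and quotients, and the converse uses that type $\FP_1$ makes the augmentation ideal finitely generated. The only minor quibble is the phrase ``cyclic \ldots\ hence trivially PFG'': cyclic alone does not imply PFG in general (take $R$ itself when $R$ is not PFG), but for $\hat{\mathbb{Z}}$ with trivial $G$-action the maximal submodules are exactly the $p\hat{\mathbb{Z}}$, so PMSMG and hence PFG are indeed trivial here.
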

\begin{proof}
Suppose that the augmentation ideal $I_{\hat{\mathbb{Z}}}\llbracket G\rrbracket$ of $G$ is PFG. Then the group ring $\hat{\mathbb{Z}} \llbracket G \rrbracket$ fits into the exact sequence \eqref{eq:augmentation} and it is PFG as an extension of two PFG modules.

On the other hand, if $\hat{\mathbb{Z}} \llbracket G \rrbracket$ is PFG, then $\hat{\mathbb{Z}} \llbracket G \rrbracket$-modules are PFG if and only if they are finitely generated; for $G$ of type $\FP_1$, the augmentation ideal is finitely generated and hence PFG.
\end{proof}

Note that profinite groups of type $\FP_1$ over $\hat{\mathbb{Z}}$ which are not countably based exist, by \cite[Example 7.1]{CC}, but they do not have UBERG by Proposition \ref{UBERG=cb}.

Combining the last two propositions, we get:

\begin{cor}
\label{PFGtoUBERG}
If $G$ is PFG, it has UBERG.
\end{cor}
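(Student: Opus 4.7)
The statement to prove is essentially a direct chaining of the two preceding propositions, so there is no real obstacle here; the plan is to simply compose the two implications that have just been established.

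First I would invoke Proposition \ref{prop:damian_thm4.4} (Damian's Theorem 4.4) to deduce that, since $G$ is PFG, its augmentation ideal $I_{\hat{\mathbb{Z}}}\llbracket G \rrbracket$ is PFG as a $\hat{\mathbb{Z}}\llbracket G \rrbracket$-module, i.e.\ $G$ is APFG. Then I would apply the first half of Proposition \ref{APFGring}: in the short exact sequence
\[
0 \to I_{\hat{\mathbb{Z}}}\llbracket G \rrbracket \to \hat{\mathbb{Z}}\llbracket G \rrbracket \to \hat{\mathbb{Z}} \to 0,
\]
the outer terms are both PFG as $\hat{\mathbb{Z}}\llbracket G \rrbracket$-modules ($\hat{\mathbb{Z}}$ being cyclic and $I_{\hat{\mathbb{Z}}}\llbracket G \rrbracket$ being PFG by the previous step), hence $\hat{\mathbb{Z}}\llbracket G \rrbracket$ is PFG as a module over itself by Lemma \ref{lem:PFGquotext} (closure of PFG modules under extensions).

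Finally, by Proposition \ref{KV}, the condition that $\hat{\mathbb{Z}}\llbracket G \rrbracket$ be PFG as a $\hat{\mathbb{Z}}\llbracket G \rrbracket$-module is exactly UBERG for $G$, so we conclude. The whole argument is essentially a one-line composition, and the only thing worth flagging is that no finite-generation hypothesis on $G$ is needed here, because the implications PFG $\Rightarrow$ APFG $\Rightarrow$ UBERG of Propositions \ref{prop:damian_thm4.4} and \ref{APFGring} both hold in full generality.
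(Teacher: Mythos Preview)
Your proof is correct and follows exactly the paper's approach: combine Proposition~\ref{prop:damian_thm4.4} (PFG $\Rightarrow$ APFG) with the first half of Proposition~\ref{APFGring} (APFG $\Rightarrow$ UBERG). You have simply unpacked the argument of Proposition~\ref{APFGring} rather than citing it directly, which is fine but unnecessary.
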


\begin{rem}\label{rem:prop6.1notfingen}
All finitely generated prosoluble groups by \cite[Corollary 6.12]{KV} are PFG, and hence have UBERG. Non-abelian free profinite groups do not have UBERG by \cite[Proposition 6.14]{KV}.
\end{rem}

\subsection{The Schur multiplier and Schur covers}
\label{Schur}

The Schur multiplier of an abstract group is an important source of group-theoretic information. Basic facts we use about the Schur multiplier in this paper may be found in \cite{BT} and \cite{KarpSM}. The profinite version is less well known, so we introduce it here. Recall that the Schur multiplier, for an abstract group $G$, is $$H_2(G,\mathbb{Z}) \cong \frac{R \cap [F,F]}{[F,R]},$$ for a presentation of $G$ given by the exact sequence $1 \to R \to F \to G \to 1$ with $F$ free. The same argument as in the abstract case (see, for example, \cite[Presentations 6.8.7]{Weibel}) shows that, for $G$ profinite and $1 \to \hat{R} \to \hat{F} \to G \to 1$ a profinite presentation, we have $$H_2(G,\hat{\mathbb{Z}}) \cong \frac{\hat{R} \cap [\hat{F},\hat{F}]}{[\hat{F},\hat{R}]}.$$ Here, by $[\hat{F},\hat{F}]$ and $[\hat{F},\hat{R}]$, we mean the closure of the abstract subgroups generated by these commutators. We may write $M(G)$ for the Schur multiplier of a profinite group $G$.

\begin{lem}
For $G$ finite, $H_2(G,\mathbb{Z})$ (with $G$ considered as an abstract group) is isomorphic to $H_2(G,\hat{\mathbb{Z}})$ (with $G$ considered as a profinite group).
\end{lem}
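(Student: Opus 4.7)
The plan is to combine Hopf's formula, as stated just before the lemma, with the five-term Lyndon--Hochschild--Serre exact sequences in both the abstract and profinite settings, and then compare them using flatness of $\hat{\mathbb{Z}}$.

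First, choose a finite abstract presentation $1 \to R \to F \to G \to 1$ with $F$ a finitely generated free abstract group; since $R$ has finite index in $F$, it too is finitely generated by Nielsen--Schreier. Taking profinite completions gives an exact sequence $1 \to \bar{R} \to \hat{F} \to G \to 1$ of profinite groups, where $\bar{R}$ is the closure of $R$ in $\hat{F}$. Since finitely generated free groups are subgroup separable (Hall), the profinite topology on $R$ agrees with the one induced from $\hat{F}$, so $\bar{R}$ is canonically the profinite completion $\hat{R}$ of $R$.

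The key technical step is to identify $\bar{R}/\overline{[\hat{F},\bar{R}]}$ with $(R/[F,R])\otimes\hat{\mathbb{Z}}$. Since commutators are continuous, $[\hat{F}, \bar{R}] \subseteq \overline{[F,R]}$, and the reverse inclusion is trivial, giving $\overline{[\hat{F},\bar{R}]} = \overline{[F,R]}$ (closure in $\bar{R}$). Moreover, because $R/[F,R]$ is a finitely generated abelian group, the natural map $\hat{R} \twoheadrightarrow \widehat{R/[F,R]}$ has kernel the closure of the image of $[F,R]$ in $\hat{R}$, yielding $\bar{R}/\overline{[F,R]} \cong \widehat{R/[F,R]} \cong (R/[F,R]) \otimes \hat{\mathbb{Z}}$.

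The five-term LHS sequences in the abstract and profinite settings, using that $F$ and $\hat{F}$ have (co)homological dimension one (so $H_2(F,\mathbb{Z}) = H_2(\hat{F},\hat{\mathbb{Z}})=0$), read
\begin{align*}
0 \to H_2(G, \mathbb{Z}) &\to R/[F,R] \to F/[F,F] \to G^{\mathrm{ab}} \to 0, \\
0 \to H_2(G, \hat{\mathbb{Z}}) &\to \bar{R}/\overline{[\hat{F}, \bar{R}]} \to \hat{F}/\overline{[\hat{F},\hat{F}]} \to G^{\mathrm{ab}} \to 0.
\end{align*}
Tensoring the first sequence with the flat $\mathbb{Z}$-module $\hat{\mathbb{Z}}$, the finiteness of $G^{\mathrm{ab}}$ and of $H_2(G,\mathbb{Z})$ (the Schur multiplier of a finite group is finite) leaves those terms fixed, while $F^{\mathrm{ab}}\otimes\hat{\mathbb{Z}} = \hat{F}/\overline{[\hat{F},\hat{F}]}$ and the identification above make the middle terms agree with the profinite sequence. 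Naturality of all maps and the five-lemma then yield $H_2(G, \mathbb{Z}) \cong H_2(G, \hat{\mathbb{Z}})$. The main obstacle is the key identification step, which requires subgroup separability of $F$ to ensure $\bar{R}=\hat{R}$ and some care with the interplay between abstract and profinite commutator subgroups.
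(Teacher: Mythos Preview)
Your argument is correct and complete: the identification $\bar R/\overline{[\hat F,\bar R]}\cong (R/[F,R])\otimes\hat{\mathbb{Z}}$ goes through as you describe, and the four-lemma (your five-lemma invocation) applied to the two five-term sequences, together with finiteness of $H_2(G,\mathbb{Z})$, gives the result. One minor remark: since $R$ has finite index in $F$, the profinite topology on $R$ is automatically induced from $F$ (every finite-index subgroup of $R$ contains its core in $F$), so Hall's subgroup separability theorem is not needed.

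Your route is genuinely different from the paper's. The paper passes to cohomology: it identifies the abstract Schur multiplier with $H^2(G,\mathbb{C}^\times)=H^2(G,\mathbb{Q}/\mathbb{Z})$, observes that for a finite group every abstract cochain in the bar complex with coefficients in $\mathbb{Q}/\mathbb{Z}$ is automatically continuous (so abstract and profinite $H^2(G,\mathbb{Q}/\mathbb{Z})$ coincide), and then applies Pontryagin duality to recover $H_2(G,\hat{\mathbb{Z}})$. Your approach stays entirely on the homology side, exploiting Hopf's formula (which the paper has just recalled) and the flatness of $\hat{\mathbb{Z}}$ over $\mathbb{Z}$. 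The paper's argument is shorter and leans on well-known identifications, but requires the reader to be comfortable with Pontryagin duality for profinite homology; yours is more self-contained and makes direct use of the presentation-theoretic description already on the page, at the cost of tracking the comparison maps and invoking finiteness of the Schur multiplier as input rather than output.
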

\begin{proof}
It is well-known that the Schur multiplier for abstract groups is equal to $H^2(G,\mathbb{C}^\times)$. Calculating cohomology using the bar resolution, each term in the cochain complex $\Hom(G^n,\mathbb{C}^\times)$ is equal to $\Hom(G^n,\mathbb{Q}/\mathbb{Z})$ because the image of each element of $G^n$ has finite order; we deduce that the Schur multiplier is isomorphic to $H^2(G,\mathbb{Q}/\mathbb{Z})$. By considering the bar resolution again, this is the same as $H^2(G,\mathbb{Q}/\mathbb{Z})$ when we think of $G$ as a profinite group because every abstract homomorphism in $\Hom(G^n,\mathbb{Q}/\mathbb{Z})$ is continuous, which is isomorphic to $H_2(G,\hat{\mathbb{Z}})$ by Pontryagin duality, \cite[Proposition 6.3.6]{RZ}.
\end{proof}

Thus for finite groups we can talk about `the' Schur multiplier, to mean the group defined (up to isomorphism) by both our definitions.

An important part of the theory of Schur multipliers for abstract groups is the existence of Schur covers. There does not seem to be a good reference for Schur covers of profinite groups, but we will content ourselves here with giving the facts we will need: the proofs go through just as they do for abstract groups.

A Schur cover of a profinite group $G$ is an exact sequence $M(G) \rightarrowtail H \twoheadrightarrow G$ which is a stem extension: an extension in which $M(G)$ is contained in the centre of $H$ and in the derived subgroup $\overline{[H,H]}$. Schur covers are precisely the maximal stem extensions of $G$. The universal coefficient theorem gives a short exact sequence $$0 \to \Ext_{\hat{\mathbb{Z}}}^1(G_{ab},M(G)) \to H^2(G,M(G)) \to \Hom(M(G),M(G)) \to 0$$ (here all the cohomological functors are understood to be derived functors of the abstract group of homomorphisms, defined by taking a projective resolution in the first variable), and the preimage of $\mathrm{id}_{M(G)}$ in $H^2(G,M(G))$ gives all the isomorphism classes of Schur covers of $G$ (see \cite[Proposition II.3.2]{BT}). So the number of such isomorphism classes is at most $|\Ext_{\hat{\mathbb{Z}}}^1(G_{ab},M(G))|$. In particular, for $G$ a perfect group, there is a unique Schur cover.

\begin{prop}[{\cite[Theorem II.5.3]{BT}}]
\label{perfectcover}
Let $G$ be perfect, and $H$ its Schur cover. Then $H$ is perfect, and its Schur multiplier $M(H)$ is trivial.
\end{prop}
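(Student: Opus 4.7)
The plan is to prove the two assertions separately: first, that $H$ is perfect, which is immediate from the stem condition and perfection of $G$; second, that $M(H)=0$, which I would handle by adapting the classical universal-central-extension argument. For the perfection of $H$, since $G=H/M(G)$ is perfect we have $H=\overline{[H,H]}\cdot M(G)$, and the stem condition $M(G)\subseteq\overline{[H,H]}$ then forces $H=\overline{[H,H]}$.

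For $M(H)=0$, I would fix a Schur cover $M(H)\rightarrowtail K\twoheadrightarrow H$, which is again perfect by the first paragraph, and form the composed central extension $N\rightarrowtail K\twoheadrightarrow G$ with $N$ fitting into $M(H)\rightarrowtail N\twoheadrightarrow M(G)$ via a projection $\pi$. A commutator calculation shows $N\subseteq Z(K)$: for $k\in K$ and $n\in N$, $[k,n]$ lies in $M(H)$ because $N/M(H)=M(G)$ is central in $K/M(H)=H$; then, using $M(H)\subseteq Z(K)$, the map $k\mapsto[k,n]$ is a continuous homomorphism $K\to M(H)$, which must be trivial since $K$ is perfect. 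Because $K=\overline{[K,K]}$, $N$ is automatically contained in the commutator subgroup, so $N\rightarrowtail K\twoheadrightarrow G$ is a stem extension of $G$.

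To conclude, I would invoke the universal coefficient sequence from the excerpt: since $G_{ab}=0$, it collapses to an isomorphism $H^2(G,A)\cong\Hom(M(G),A)$ for any trivial profinite module $A$, and under this identification the class of a central extension is surjective onto $A$ iff the extension is stem (via the $5$-term exact sequence in homology). Let $\psi:M(G)\twoheadrightarrow N$ be the class of $N\rightarrowtail K\twoheadrightarrow G$. Pushing out along $\pi$ produces the central extension $M(G)\rightarrowtail K/M(H)=H\twoheadrightarrow G$, namely the original Schur cover, whose class is $\mathrm{id}_{M(G)}$. Since pushout corresponds to post-composition with $\pi$ under the $\Hom$-identification, this yields $\pi\circ\psi=\mathrm{id}_{M(G)}$. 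Hence $\psi$ is a surjective section of $\pi$, and so a continuous bijection, and so (by compactness) an isomorphism of profinite groups, forcing $M(H)=\ker\pi=0$.

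The main obstacle will be to verify in the profinite setting the three pieces of cohomological machinery used: the classification of continuous central extensions of $G$ by $A$ in terms of $H^2(G,A)$, the interpretation of the stem condition via the profinite $5$-term homology sequence, and the behaviour of pushout under the $\Hom(M(G),-)$ identification. Each is a direct transcription of the classical abstract case and is in line with the excerpt's remark that the proofs of basic facts about Schur covers go through unchanged, but some care with continuity and with the profinite version of the $5$-term sequence will be needed.
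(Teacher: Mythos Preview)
The paper does not supply its own proof of this proposition: it is stated with the citation \cite[Theorem II.5.3]{BT} and no proof environment follows, in line with the paper's remark that the basic facts about Schur covers ``go through just as they do for abstract groups''. Your proposal is a correct and careful unpacking of the classical universal-central-extension argument that underlies that citation, adapted to the profinite setting. The first paragraph is immediate; in the second and third you correctly show that the composite $N\rightarrowtail K\twoheadrightarrow G$ is a central (hence, since $K$ is perfect, stem) extension, identify its class with a surjection $\psi:M(G)\to N$ via the five-term sequence, and use naturality of the UCT identification under pushout to obtain $\pi\circ\psi=\mathrm{id}_{M(G)}$, forcing $M(H)=\ker\pi=0$. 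The caveats you flag about transporting the cohomological machinery to the profinite category are exactly the right ones, and none of them presents a genuine obstruction.
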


\subsection{Frattini covers and PFG}
\label{sec:frattinicovers}

For a profinite group $G$, write $\Phi(G)$ for the Frattini subgroup of $G$: that is, the intersection of all the maximal open subgroups of $G$. An epimorphism $f: H \to G$ is called a \emph{Frattini cover} of $G$ if $\ker(f) \leq \Phi(H)$. The Frattini covers of $G$ form an inverse system whose inverse limit, called the \emph{universal Frattini cover} of $G$, is again a Frattini cover of $G$ and is a projective profinite group. See \cite[Chapter 22]{FJ} for background on this.

\begin{lem}
\label{fratcover}
A Frattini cover $H$ of a profinite group $G$ is PFG if and only if $G$ is.
\end{lem}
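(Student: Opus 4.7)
The plan is to reduce the claim to the defining property of the Frattini subgroup: a subset $S \subseteq H$ topologically generates $H$ if and only if its image $f(S)$ topologically generates $G$.

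First I would handle the easy direction. If $H$ is PFG, then $G$ is PFG because PFG is closed under (continuous) quotients: for the epimorphism $f: H \to G$ and the induced map $\pi: H^k \to G^k$, we have $\pi(X(H,k)) \subseteq X(G,k)$, so by Lemma~\ref{lem:11.1.1} applied to $G^k \to G^k/(\ker f)^k \cong G^k$ (or simply by the closure of PFG under quotients, which is the group analogue of Lemma~\ref{lem:PFGquotext} and is \cite[Proposition 11.2.1]{LS}), positivity passes down.

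For the harder direction, suppose $G$ is PFG. The key observation is that, because $\ker(f) \leq \Phi(H)$, for any closed subgroup $K \leq H$ one has $K \cdot \ker(f) = H$ only if $K = H$: indeed, if $K$ were a proper closed subgroup, it would be contained in some maximal open subgroup $M$ of $H$, which also contains $\Phi(H) \supseteq \ker(f)$, and then $K \cdot \ker(f) \leq M \neq H$. Applying this with $K = \overline{\langle h_1,\ldots,h_k\rangle}$, one concludes that $\langle h_1,\ldots,h_k\rangle = H$ if and only if $\langle f(h_1),\ldots,f(h_k)\rangle = G$. Consequently $X(H,k) = \pi^{-1}(X(G,k))$, where $\pi: H^k \to G^k$ is the coordinatewise projection, a quotient map by the closed normal subgroup $(\ker f)^k \trianglelefteq H^k$.

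Finally I would apply Lemma~\ref{lem:11.1.1} to the quotient $\pi$: it gives
\[
P(H,k) = \mu_{H^k}(\pi^{-1}(X(G,k))) = \mu_{G^k}(X(G,k)) = P(G,k),
\]
so $P(H,k) > 0$ for some $k$ whenever $G$ is PFG. I do not anticipate a serious obstacle; the only subtle point is correctly invoking the Frattini property at the level of closed (topologically generated) subgroups rather than just open or finite-index ones, but this is standard and follows immediately from the fact that every proper closed subgroup of $H$ lies in a maximal open subgroup.
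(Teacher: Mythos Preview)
Your proposal is correct and follows essentially the same approach as the paper: both hinge on the observation that, since $\ker(f)\leq\Phi(H)$, any lift of a generating set of $G$ generates $H$, so $X(H,k)=\pi^{-1}(X(G,k))$ and Lemma~\ref{lem:11.1.1} gives $P(H,k)=P(G,k)$. Your write-up is simply more explicit than the paper's two-line version.
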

\begin{proof}
Without loss of generality, we can assume that both $H$ and $G$ are finitely generated. In fact, we just have to observe that, for any generating set $S$ of $G$, any lift of $S$ to $H$ generates $H$, since the kernel is contained in the Frattini subgroup of $H$. The claim follows by Lemma \ref{lem:11.1.1}.
\end{proof}

\begin{lem}
\label{Schurcover}
A Schur cover of a profinite group $G$ is a Frattini cover.
\end{lem}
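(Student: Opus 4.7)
The plan is to use the two defining properties of a stem extension (centrality of $M(G)$ in $H$ and containment in the closed derived subgroup $\overline{[H,H]}$) to force $M(G)$ into every maximal open subgroup of $H$.

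Let $\pi\colon H \twoheadrightarrow G$ be a Schur cover, so $\ker \pi = M(G)$. I need to show $M(G) \subseteq \Phi(H)$, i.e.\ $M(G) \subseteq M$ for every maximal open subgroup $M$ of $H$. Suppose for contradiction that some maximal open $M \leq H$ does not contain $M(G)$. Since $M(G)$ is central in $H$, it is normal, so $M \cdot M(G)$ is a closed subgroup of $H$ strictly containing $M$; by maximality of $M$, this forces $M \cdot M(G) = H$.

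Now I exploit that $M(G)$ lies in $\overline{[H,H]}$. Every element of $H$ is of the form $mz$ with $m \in M$ and $z \in M(G)$ central, so for any $h_1 = m_1 z_1$ and $h_2 = m_2 z_2$ in $H$ one has $[h_1,h_2] = [m_1, m_2] \in M$. Hence the abstract commutator subgroup $[H,H]$ lies in $M$, and since $M$ is closed (open subgroups of profinite groups are closed) we get $\overline{[H,H]} \subseteq M$. But then $M(G) \subseteq \overline{[H,H]} \subseteq M$, contradicting the choice of $M$.

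The only point requiring care is the density/closure argument, namely that the inclusion $[H,H] \subseteq M$ passes to $\overline{[H,H]}$; this is immediate from $M$ being closed. Everything else follows formally from the definition of a stem extension given just before Proposition \ref{perfectcover}, so no further obstacle is expected.
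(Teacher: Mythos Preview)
Your proof is correct and is precisely the standard argument that the paper invokes by citing \cite[Lemma~2.4.5]{KarpSM}; the paper gives no further detail beyond that reference, so your write-up simply makes explicit what the citation stands for. The only minor point worth noting is that $M\cdot M(G)$ is automatically closed because $M$ is open (so $M\cdot M(G)$ is a union of open cosets, hence open, hence closed), which justifies your appeal to maximality.
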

\begin{proof}
This holds by the same argument as \cite[Lemma 2.4.5]{KarpSM}.
\end{proof}

\section{An open question of Kionke-Vannacci}
\label{sec:PFGPFR}

We first answer the second part of \cite[Open Question 1.2]{KV}.

\begin{prop}\label{prop:PFGimpliesPFR}
Let $G$ be a finitely presented profinite group. If $G$ is PFG, then it is PFR.
\end{prop}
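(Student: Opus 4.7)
The plan is extremely short: this proposition is essentially a concatenation of two prior results in the excerpt. By Corollary \ref{PFGtoUBERG}, any PFG profinite group has UBERG, and by Proposition \ref{KV}, UBERG and PFR are equivalent in the class of finitely presented profinite groups. Combining these two implications immediately yields the statement.

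Concretely, I would begin by invoking Corollary \ref{PFGtoUBERG} to conclude that $G$ has UBERG from the PFG hypothesis. Then, since $G$ is by assumption finitely presented, the equivalence ``PFG, PFR, PMEG, UBERG all coincide for finitely presented profinite groups'' stated in Proposition \ref{KV} applies, and I would directly extract the implication UBERG $\Rightarrow$ PFR to conclude.

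There is no real obstacle at this stage of the proof, because all the substantive content has already been packaged into the cited results: the nontrivial step PFG $\Rightarrow$ APFG is \cite[Theorem 4.4]{Damian}, the step APFG $\Rightarrow$ UBERG is Proposition \ref{APFGring}, and the step UBERG $\Rightarrow$ PFR in the finitely presented case is from \cite{KV}. One could, if desired, spell out the last implication by noting that for a finitely presented group $G$ one can fix a presentation $1\to R\to F\to G\to 1$ with $F$ finitely generated free profinite, and then UBERG of $G$ translates (via the PMEG characterisation) into the fact that the normal subgroup $R$ of $F$ is positively finitely normally generated; but this is precisely the argument carried out in \cite{KV} and does not need to be re-done here. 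Hence the proof reduces to a two-line citation chain.
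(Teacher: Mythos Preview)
Your proposal is correct and follows essentially the same route as the paper: the paper argues PFG $\Rightarrow$ APFG (Proposition~\ref{prop:damian_thm4.4}) $\Rightarrow$ $\hat{\mathbb{Z}}\llbracket G\rrbracket$ PFG, then invokes \cite[Theorem~A]{KV} to get PFR from UBERG plus finite presentation, which is exactly your chain with Corollary~\ref{PFGtoUBERG} packaging the first two steps. The only cosmetic difference is that the paper spells out the APFG intermediate step rather than citing the corollary directly.
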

\begin{proof}
By Proposition \ref{prop:damian_thm4.4}, $G$ is APFG. Hence $\hat{\mathbb{Z}} \llbracket G \rrbracket$ fits into \eqref{eq:augmentation} and it is PFG. Now \cite[Theorem A]{KV} shows that $G$ is PFR.
\end{proof}

In the rest of the section, we give an example of a PFR group which is not PFG, answering the first half of \cite[Question 1.2]{KV}.

Consider the group $G = \prod_{n \geq N} A_n^{2^n}$, where $A_n$ is the alternating group on $n$ letters. For the sake of concreteness, we will take $N = 15$; it is entirely possible to take any $N \geq 5$, at the expense of a little extra complexity in the argument, and the features described here will not change, except some constants like the number of generators needed for $G$ and the constant $3$ in the statement of Theorem \ref{PFRnotPFG}. We have tried to make explicit where assumptions on $N$ are actually being used.

\begin{prop}
$G$ is $2$-generated but not PFG.
\end{prop}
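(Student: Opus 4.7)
The plan is to prove the two statements separately: $2$-generation via a Hall-type counting argument, and failure of PFG via the Mann--Shalev theorem cited at the start of Section \ref{sec:pfgpfrmore}.

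For $2$-generation I would invoke the classical criterion (going back to Philip Hall, with the profinite extension following from the facts that topological generation is equivalent to generation in every continuous finite quotient, together with Goursat's lemma applied to subdirect products of nonabelian finite simple groups): for pairwise non-isomorphic nonabelian finite simple groups $T_n$, the product $\prod_n T_n^{m_n}$ is topologically $d$-generated if and only if $m_n \leq \phi_d(T_n)$ for every $n$, where $\phi_d(T_n)$ is the number of $\mathrm{Aut}(T_n)$-orbits on generating $d$-tuples of $T_n$. Applied to $G$, this reduces to checking $2^n \leq \phi_2(A_n)$ for all $n \geq 15$. Since $|\mathrm{Aut}(A_n)| = n!$ for $n \neq 6$ and, by Dixon's theorem, the proportion of generating pairs in $A_n^2$ tends to $1$, one has $\phi_2(A_n) \geq n!/8$ for all sufficiently large $n$, and a direct estimate shows this comfortably exceeds $2^n$ for $n \geq 15$.

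For the failure of PFG I would apply Mann--Shalev and exhibit superpolynomial maximal subgroup growth. Fix $n \geq 15$; for each index $i \in \{1,\dots,2^n\}$ and each point $p \in \{1,\dots,n\}$, let $M_{n,i,p} \leq G$ be the preimage of the point-stabiliser $(A_n)_p$ under the projection of $G$ onto the $i$-th copy of $A_n$ inside the factor $A_n^{2^n}$. Then $M_{n,i,p}$ is open of index exactly $n$, and it is maximal because $(A_n)_p$ is maximal in $A_n$ and this projection is surjective. Distinct pairs $(i,p)$ give distinct subgroups of $G$, so the number of open maximal subgroups of $G$ of index $n$ is at least $n \cdot 2^n$, which grows faster than any polynomial in $n$. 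Hence $G$ is not PFG.

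The main obstacle is pinning down the Hall-type $2$-generation criterion cleanly in the profinite setting (and choosing $N$ large enough that $2^n \leq \phi_2(A_n)$ holds from the very first factor onward); once this is in hand, the rest is arithmetic and a routine counting of coordinate maximal subgroups.
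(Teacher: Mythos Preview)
Your proposal is correct and follows essentially the same approach as the paper. For $2$-generation, both arguments use the Hall/Kantor--Lubotzky criterion reducing to $2^n \leq \phi_2(A_n)$; the paper cites the explicit Morgan--Roney-Dougal bound $|D(A_n)| \geq (1-1/n-8.8/n^2)|A_n|^2$ to handle all $n \geq N$ at once, whereas you appeal to Dixon's asymptotic result, which leaves a small obligation to verify the inequality from $n=15$ onward (easily done, but worth making explicit). For the failure of PFG, the paper simply cites \cite[Example~2]{Mann}, while you spell out the equivalent Mann--Shalev argument by exhibiting the $n\cdot 2^n$ coordinate point-stabilisers of index $n$; this is exactly the content of Mann's example.
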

\begin{proof}
The argument of \cite[Example 2]{Mann} shows $G$ is not PFG. By \cite[Lemma 5]{KaLu}, it is enough to show $A_n^{2^n}$ is $2$-generated for each $n \geq N$. Let $S$ be a non-abelian finite simple group, and let $D(S)$ be the number of elements of the product $S^2$ which generate $S$. By \cite[Corollary 7]{KaLu}, if $m \leq |D(S)|/|Aut(S)|$, then $S^m$ is $2$-generated. Now for $n \geq 7$, it is well-known that $Aut(A_n) = S_n$, the symmetric group; for $n \geq 5$ we have $|D(A_n)| \geq (1-1/n-8.8/n^2)|A_n|^2$ by \cite[Theorem 1.1]{MRD}. So $|D(S)|/|Aut(S)| \geq (1-1/n-8.8/n^2)n!/4$, which is easily checked to be $>2$ for $n \geq 7$.
\end{proof}

\begin{rem}
On the other hand, for $n=6$ we have $|Aut(A_n)|=2|S_n|$, and the computation using $|D(A_n)| = 0.588|A_n|^2$ from \cite[Table 1]{MRD} shows $A_6^{2^6}$ is not $2$-generated. (The corresponding calculation for $n=5$ shows $A_5^{2^5}$ is not $2$-generated either.)
\end{rem}

For $n \geq 8$, the Schur multiplier of $A_n$ has order $2$ by \cite[Theorem 2.12.5]{KarpSM}, and we write $A^\ast_n$ for the (unique, because $A_n$ is perfect) Schur cover of $A_n$.

By \cite[Theorem 2.2.10]{KarpSM}, the Schur multiplier of a finite product of alternating groups $\prod_i A_{n_i}$ is the product of the Schur multipliers $\prod_i M(A_{n_i}) = \prod_i \mathbb{Z}/2\mathbb{Z}$; with \cite[Theorem 2.8.5]{KarpSM}, it follows that the Schur cover of $\prod_i A_{n_i}$ is $\prod_i A^\ast_{n_i}$. Since profinite homology groups commute with inverse limits, we get that $M(G) = \prod_{n \geq N} (\mathbb{Z}/2\mathbb{Z})^{2^n}$. For the Schur cover, we use the functoriality of our construction: $H^2(G,M(G)) = \prod_j \bigoplus_i H^2(A_{n_i},M(A_{n_j}))$, where $i$ and $j$ index the simple factors of $G$. Now, as described in Section \ref{Schur}, pick the preimage of $\mathrm{id}_{M(G)}$ given by picking $0 \in H^2(A_{n_i},M(A_{n_j}))$ for $i \neq j$ and a preimage of $\mathrm{id}_{M(A_{n_i})}$ in $H^2(A_{n_i},M(A_{n_j}))$ when $i=j$. We deduce that the Schur cover $\tilde{G}$ of $G$ is $\prod_{n \geq N} (A^\ast_n)^{2^n}$ (again, this is unique because $G$ is perfect).

Now $\tilde{G}$ is a Frattini cover of $G$ by Lemma \ref{Schurcover}, so it is $2$-generated and not PFG. We will imitate \cite[Example 4.5]{Damian} to prove:

\begin{prop}
$\tilde{G}$ has UBERG.
\end{prop}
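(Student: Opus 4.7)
The plan is to bound directly, as a polynomial in $k$, the number of irreducible $\hat{\mathbb{Z}}\llbracket\tilde{G}\rrbracket$-modules of order $k$, in the spirit of Damian's argument. Any such irreducible has order $p^d$ for some prime $p$ and, after extending scalars to $\overline{\mathbb{F}}_p$, decomposes into a Galois orbit of absolutely irreducibles of total dimension $d$, so it suffices to prove that the number of absolutely irreducible $\overline{\mathbb{F}}_p\tilde{G}$-modules of dimension at most $d$ is polynomial in $p^d$, uniformly in $p$.

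My next step is to exploit the product structure of $\tilde{G}=\prod_{n\ge N}(A_n^{\ast})^{2^n}$. Since $A_n^{\ast}$ is quasisimple with centre of order $2$, its only normal subgroups are $1$, the centre and $A_n^{\ast}$ itself, so every open quotient of $\tilde{G}$ is a finite product of copies of $A_n^{\ast}$ and $A_n$. The external tensor product theorem then identifies each absolutely irreducible representation with an outer tensor product $V_1\boxtimes\cdots\boxtimes V_r$ of nontrivial absolutely irreducibles of pairwise distinct simple factors of $\tilde{G}$. Thus an absolutely irreducible of dimension $d$ is labelled by a choice of $r$ distinct simple factors of $\tilde{G}$, an ordered factorisation $d=d_1\cdots d_r$ into parts $\ge 2$, and, for each chosen factor, a nontrivial irreducible of the matching dimension.

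The counting then rests on two estimates. First, the minimal dimension of a nontrivial irreducible representation of $A_n^{\ast}$ over $\overline{\mathbb{F}}_p$ is linear in $n$: representations factoring through $A_n$ satisfy classical Rasala-type bounds (minimum dimension $\ge (n-2)/2$ for $n\ge 15$ in any characteristic), while the faithful spin representations, existing only in odd characteristic, have dimension at least $2^{\lfloor(n-2)/2\rfloor}$. Hence only factors of type $A_n^{\ast}$ with $n\le Cd_j$ can contribute a tensor component of dimension $d_j$, and the number of such simple factors of $\tilde{G}$ is $\sum_{N\le n\le Cd_j}2^n=O(2^{Cd_j})$. Since $\sum_j d_j\le d$ for any factorisation into parts $\ge 2$, the total number of ways to choose the factors is at most $2^{C'd}$, which is polynomial in $p^d$ for $p\ge 2$. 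Second, the number of absolutely irreducible representations of $A_n^{\ast}$ of any given dimension is bounded by the total count, which is at most $2p(n)\le e^{C''\sqrt{n}}$; since $r\le \log_2 d$ and each $n\le Cd_j$, this multiplies out to a negligible subexponential factor, even after bounding the polynomial number of ordered factorisations of $d$.

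The main difficulty is securing the uniform-in-$p$ linear lower bound on the dimension of faithful representations of $A_n^{\ast}$ in odd characteristic, which is more delicate than its complex counterpart. A clean way to bypass this is to handle the characteristics separately: in characteristic $2$ the central $2$-group $\ker(\tilde{G}\to G)=M(G)\cong\prod_n(\mathbb{Z}/2\mathbb{Z})^{2^n}$ acts trivially on every irreducible (since $\overline{\mathbb{F}}_2^{\times}$ has no $2$-torsion), so all such representations factor through $G$ and one only has to count irreducibles of $G$; in odd characteristic, the dimension bound for faithful irreducibles of $A_n^{\ast}$ follows from the complex spin representation dimension via reduction modulo $p$.
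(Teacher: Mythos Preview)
Your overall strategy matches the paper's: both arguments show that any irreducible $\tilde{G}$-module of order $p^r$ must factor through a bounded truncation $H_r=\prod_{j\le f(r)}(A_j^\ast)^{2^j}$, by using a lower bound on the dimension of nontrivial irreducibles of $A_n^\ast$, and then count. The paper does this more directly: working over $\mathbb{F}_p$, it takes an $A_u^\ast$-composition series of the module and invokes Kleidman--Liebeck (Propositions~5.3.1 and~5.3.7) to get the uniform bound $\dim\ge u-2$ for \emph{every} nontrivial irreducible of $A_u^\ast$, with no case split between linear and spin representations and no passage to $\overline{\mathbb{F}}_p$; the final counting is then delegated to \cite[Example~4.5]{Damian}. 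Your route via the external tensor product theorem over $\overline{\mathbb{F}}_p$ and explicit counting of factorisations is more hands-on and perfectly viable, and your estimate $\sum_j d_j\le d$ for factorisations into parts $\ge 2$ is correct.

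There is, however, a genuine gap in your justification of the dimension bound for faithful (spin) irreducibles in odd characteristic. You write that the bound ``follows from the complex spin representation dimension via reduction modulo $p$''. This is backwards: reducing a complex representation mod~$p$ shows that \emph{some} faithful modular irreducible has dimension \emph{at most} the complex spin dimension, i.e.\ it gives an upper bound, not the lower bound you need. The inequality you want (that every faithful irreducible of $A_n^\ast$ over $\overline{\mathbb{F}}_p$, $p$ odd, has dimension at least linear in $n$, in fact at least $2^{\lfloor(n-2)/2\rfloor}$) is true, but it requires an independent argument---precisely what Kleidman--Liebeck Proposition~5.3.7 provides, and what the paper cites. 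Once you replace your reduction-mod-$p$ sentence by a citation to that result (or to Wagner's work on modular spin representations), your proof goes through. A minor further point: in characteristic~$2$ you reduce to counting irreducibles of $G$, but you do not actually carry this out; it needs the same argument again, now using only the Rasala/James bound for $A_n$, so this is easily patched.
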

\begin{proof}
We will show the number $g(n)$ of irreducible $\tilde{G}$-modules of order $n$ is polynomial in $n$. Suppose $M$ is an irreducible $\tilde{G}$-module with $|M|=n=p^r$. The action of $\tilde{G}$ on $M$ factors through some finite product $A^\ast_{n_1} \times \cdots \times A^\ast_{n_t}$ such that none of the $A^\ast_{n_i}$ act trivially on it.

Set $u=n_i$ for some $1 \leq i \leq t$ and consider an $A^\ast_u$-composition series for $M$: a filtration $0=M_0 \leq M_1 \leq \cdots \leq M_k=M$ of $A^\ast_u$-submodules such that the factors $K_{l}=M_{l}/M_{l-1}$ are irreducible $A^\ast_u$-modules. Note that $|K_l| > p$ for some $1 \leq l \leq k$, since otherwise we would have a non-trivial homomorphism $A^\ast_u \to U(r,p)$, the group of unitriangular matrices in $GL(r,p)$, which is a $p$-group; this is impossible because the only non-trivial quotients of $A^\ast_u$ are itself and $A_u$.

So $K_l$ is a non-trivial irreducible $A^\ast_u$-module. By \cite[Proposition 5.3.1, Proposition 5.3.7]{KL}, for $u \geq 9$ we get $|K_l| \geq p^{u-2}$. Thus $u-2 \leq \dim M = r$, so $n_i \leq r+2$ for $1 \leq i \leq t$, and every irreducible $\tilde{G}$-module of order $n=p^r$ is an irreducible $H_r$-module, where $H_r = \prod_{j=N}^{r+2}(A^\ast_j)^{2^j}$.

The rest of the proof of \cite[Example 4.5]{Damian} goes through without change.
\end{proof}

Since $M(G)$ is not finitely generated, $G$ does not have type $\FP_2$ over $\hat{\mathbb{Z}}$, so it is not finitely presented.

On the other hand, by \cite[Theorem A]{KV}, to show $\tilde{G}$ is PFR, it suffices to show it is finitely presented. We will do this using the equivalent condition given in \cite[Theorem 0.3]{Lubotzky}: a finitely generated profinite group $H$ is finitely presented if and only if there exists a positive constant $C$ such that, for every prime $p$ and every irreducible $\mathbb{F}_p\llbracket H \rrbracket$-module $M$, $\dim H^2(H,M) \leq C \dim M$.

\begin{thm}
\label{PFRnotPFG}
For $M$ an irreducible $\tilde{G}$-module, $\dim H^2(\tilde{G},M) \leq 3\dim M$. Thus $\tilde{G}$ is finitely presented, and hence PFR.
\end{thm}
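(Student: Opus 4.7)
The plan is to verify the criterion of \cite[Theorem 0.3]{Lubotzky}: show there is a constant $C$ (the aim is $C=3$) such that $\dim H^2(\tilde{G},M)\leq C\dim M$ for every prime $p$ and every irreducible $\mathbb{F}_p\llbracket\tilde{G}\rrbracket$-module $M$. Combined with UBERG for $\tilde{G}$ (just established) and Proposition \ref{KV}, this gives finite presentability and hence PFR.

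Fix such an $M$ and set $r=\dim_{\mathbb{F}_p}M$. By the argument in the previous proposition, the $\tilde{G}$-action factors through $H_r=\prod_{j=N}^{r+2}(A_j^\ast)^{2^j}$, so I write $\tilde{G}=H_r\times K$ with $K=\prod_{n>r+2}(A_n^\ast)^{2^n}$ acting trivially on $M$. Each $A_n^\ast$ is perfect, so $H^1(A_n^\ast,\mathbb{F}_p)=0$, and by Proposition \ref{perfectcover} has trivial Schur multiplier, so $H^2(A_n^\ast,\mathbb{F}_p)=0$ by universal coefficients. Applying the Künneth formula to each finite subproduct of $K$ and passing to the colimit that computes continuous cohomology yields $H^1(K,\mathbb{F}_p)=H^2(K,\mathbb{F}_p)=0$. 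The Künneth decomposition for the direct product $\tilde{G}=H_r\times K$ acting on $M$ (with $K$ acting trivially) then collapses to $H^2(\tilde{G},M)\cong H^2(H_r,M)$.

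To control $\dim H^2(H_r,M)$, I extend scalars to $\bar{\mathbb{F}}_p$ and handle each absolutely irreducible summand of $M\otimes\bar{\mathbb{F}}_p$ separately. Such a summand has the form of an outer tensor product $W_1\boxtimes\cdots\boxtimes W_t$ of non-trivial irreducibles for quasi-simple factors $A_{u_1}^\ast,\ldots,A_{u_t}^\ast$ of $H_r$. Because each $W_s$ is non-trivial and irreducible, $H^0(A_{u_s}^\ast,W_s)=0$, so in the Künneth expansion
\[H^2\Bigl(\prod_{s=1}^tA_{u_s}^\ast,\,W_1\boxtimes\cdots\boxtimes W_t\Bigr)=\bigoplus_{i_1+\cdots+i_t=2}\bigotimes_{s=1}^tH^{i_s}(A_{u_s}^\ast,W_s)\]
only contributions with every $i_s\geq 1$ survive, forcing $t\leq 2$. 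This leaves either a single $H^2(A_u^\ast,W)$ (when $t=1$) or a tensor product $H^1(A_{u_1}^\ast,W_1)\otimes H^1(A_{u_2}^\ast,W_2)$ (when $t=2$).

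The remaining step, which I expect to be the main obstacle, is to invoke explicit numerical bounds of the form $\dim H^i(A_n^\ast,W)\leq c_i\dim W$ for non-trivial irreducible $W$ with $c_2\leq 3$ and $c_1\leq\sqrt{3}$; multiplying through either Künneth case then gives $\dim H^2(H_r,M\otimes\bar{\mathbb{F}}_p)\leq 3\dim M$, and summing over Galois conjugates transfers the bound back to the $\mathbb{F}_p$-module $M$. I would pull these numerical bounds from the literature on cohomology of covers of alternating groups; the hypothesis $N=15$ is precisely what confines all possible quasi-simple factors appearing to indices where such sharp bounds apply, consistent with the author's remark that relaxing $N$ inflates the constant $3$.
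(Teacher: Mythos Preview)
Your outline is correct and takes a genuinely different route from the paper. Where you use the K\"unneth formula twice---once to split off the trivially acting profinite cofactor $K$, and again, after extending scalars to $\bar{\mathbb{F}}_p$, to decompose over the quasisimple factors---the paper instead applies the Lyndon--Hochschild--Serre spectral sequence twice. First it writes $\tilde{G}=K\times L$ with $L=A_{n_1}^\ast\times\cdots\times A_{n_t}^\ast$ the product of precisely the factors acting non-trivially, and reduces to $H^2(L,M)$; then it runs LHS for the central kernel $J=\ker(L\to\operatorname{Aut}(M))$, a product of copies of $\mathbb{Z}/2\mathbb{Z}$, and finishes with a case analysis on $t$ and on $p$, invoking several separate bounds from \cite{GKKL} for $H^i(A_n,-)$ and for products. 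Your K\"unneth reduction to $t\leq 2$ is tidier and replaces the $t>1$ case analysis by the single multiplicative estimate $c_1^2\leq 3$; the paper's route, on the other hand, stays over $\mathbb{F}_p$ throughout and never needs the scalar-extension bookkeeping.

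One caution on the step you flag as the main obstacle: the inequality $\dim H^2(A_n^\ast,W)\leq 3\dim W$ is not a black-box citation from \cite{GKKL}, which states its sharp bounds for $A_n$ rather than $A_n^\ast$. Obtaining $c_2=3$ requires exactly the reduction the paper carries out in its final lemma: run LHS for the central $\mathbb{Z}/2\mathbb{Z}\lhd A_n^\ast$ and, in characteristic~$2$ (the worst case), combine $\dim H^2(A_n,W)\leq\tfrac{35}{12}\dim W$ with $\dim H^1(A_n,W)\leq\tfrac{1}{12}\dim W$ for $n\geq 15$. So your deferred step and the paper's last lemma are essentially the same computation. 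The bound on $c_1$ is easier: the same LHS argument gives $\dim H^1(A_n^\ast,W)\leq\dim H^1(A_n,W)$ in every characteristic (in odd characteristic the central $\mathbb{Z}/2\mathbb{Z}$ contributes nothing, and in characteristic~$2$ one has $H^1(\mathbb{Z}/2,W)^{A_n}\cong W^{A_n}=0$), so the GKKL first-cohomology bounds for $A_n$ yield $c_1\ll 1$, comfortably below $\sqrt{3}$.
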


We will prove this in several steps.

\begin{lem}
If $M$ is a trivial $\tilde{G}$-module, $H^2(\tilde{G},M)=0$.
\end{lem}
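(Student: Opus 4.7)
The plan is to apply the universal coefficient theorem that was already invoked in Section \ref{Schur}, exploiting the fact that $\tilde{G}$ is the Schur cover of a perfect group.

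First I would observe that $G = \prod_{n\ge N} A_n^{2^n}$ is a product of perfect groups (since $N \geq 15 \geq 5$ and each $A_n$ with $n\geq 5$ is perfect), hence perfect; therefore the Schur cover $\tilde{G}$ is unique and, by Proposition \ref{perfectcover}, $\tilde{G}$ is itself perfect and has trivial Schur multiplier, i.e.\ $\tilde{G}_{ab}=0$ and $M(\tilde{G}) = H_2(\tilde{G},\hat{\mathbb{Z}}) = 0$.

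Next, I would apply the universal coefficient short exact sequence mentioned in Section \ref{Schur}, but this time with the trivial coefficient module $M$:
\[
0 \to \Ext^1_{\hat{\mathbb{Z}}}(\tilde{G}_{ab}, M) \to H^2(\tilde{G}, M) \to \Hom(M(\tilde{G}), M) \to 0.
\]
Since $\tilde{G}_{ab} = 0$ the left-hand term vanishes, and since $M(\tilde{G}) = 0$ the right-hand term vanishes as well. Hence $H^2(\tilde{G},M) = 0$.

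There is no real obstacle here; the only point requiring a little care is checking that Proposition \ref{perfectcover} applies to the infinite product $\tilde{G}$, but this is handled by the inverse limit description of $M(\tilde{G})$ already used in the paragraph constructing $\tilde{G}$ as $\prod_{n\ge N} (A_n^\ast)^{2^n}$.
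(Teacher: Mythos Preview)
Your argument is correct: since $\tilde{G}$ is the Schur cover of a perfect group, Proposition \ref{perfectcover} gives $\tilde{G}_{ab}=0$ and $M(\tilde{G})=0$, and plugging these into the universal coefficient sequence kills $H^2(\tilde{G},M)$ for any trivial $M$.

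The paper reaches the same vanishing from the same two ingredients, but by a slightly different route: it first Pontryagin-dualises to reduce to showing $H_2(\tilde{G},\mathbb{F}_p)=0$, and then squeezes this group between $H_2(\tilde{G},\hat{\mathbb{Z}})=M(\tilde{G})$ and $H_1(\tilde{G},\hat{\mathbb{Z}})=\tilde{G}_{ab}$ via the long exact sequence coming from $0 \to \hat{\mathbb{Z}} \xrightarrow{p} \hat{\mathbb{Z}} \to \mathbb{F}_p \to 0$. Your use of the universal coefficient theorem is more direct and avoids the duality step; the paper's argument has the mild advantage of using only the long exact sequence rather than the full UCT, and of making the reduction to $M\cong\mathbb{F}_p$ explicit. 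Either way the content is the same: everything rests on $\tilde{G}_{ab}=0$ and $M(\tilde{G})=0$.
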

\begin{proof}
By Pontryagin duality, \cite[Proposition 6.3.6]{RZ}, this is equivalent to showing $H_2(\tilde{G},M^\ast)=0$, where $M^\ast = \Hom(M,\mathbb{Q}/\mathbb{Z})$ is the Pontryagin dual of $M$. We have $M \cong \mathbb{F}_p$ for some $p$, so $M^\ast \cong \mathbb{F}_p$ too, also with trivial $\tilde{G}$-action. The short exact sequence $0 \to \hat{\mathbb{Z}} \to \hat{\mathbb{Z}} \to \mathbb{F}_p \to 0$ gives a long exact sequence $$\cdots \to M(\tilde{G})=H_2(\tilde{G},\hat{\mathbb{Z}}) \to H_2(\tilde{G},M^\ast) \to H_1(\tilde{G},\hat{\mathbb{Z}})=\tilde{G}_{ab} \to \cdots,$$ and we have $M(\tilde{G})=\tilde{G}_{ab}=0$ by Proposition \ref{perfectcover}, because $\tilde{G}$ is the Schur cover of a perfect group.
\end{proof}

So for the theorem, we only need to consider $M$ with a non-trivial action. Suppose $M$ is a non-trivial irreducible $\mathbb{F}_p\llbracket \tilde{G} \rrbracket$-module. As before, the action of $\tilde{G}$ on $M$ factors through some finite product $L = A^\ast_{n_1} \times \cdots \times A^\ast_{n_t}$ such that none of the $A^\ast_{n_i}$ act trivially on it, and we write $K$ for the product of all the other quasisimple factors, so $\tilde{G}=K \times L$.

\begin{lem}
$\dim H^2(G,M) \leq \dim H^2(L,M)$.
\end{lem}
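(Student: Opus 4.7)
I read the statement as $\dim H^2(\tilde G,M)\le \dim H^2(L,M)$ (the $G$ should be $\tilde G$, since the set-up writes $\tilde G=K\times L$ and the enclosing theorem bounds $H^2(\tilde G,-)$). The plan is to exploit the split decomposition $\tilde G=K\times L$ together with the fact that $K$ acts trivially on $M$, and show that $K$ contributes nothing in low degrees.

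First I would write down the Lyndon--Hochschild--Serre spectral sequence associated to the (split) extension $K\rightarrowtail \tilde G\twoheadrightarrow L$, which gives
\[
E_2^{p,q}=H^p\bigl(L,H^q(K,M)\bigr)\ \Longrightarrow\ H^{p+q}(\tilde G,M).
\]
Because $K$ acts trivially on $M$ and $M$ is an $\mathbb F_p$-vector space, $H^q(K,M)\cong H^q(K,\mathbb F_p)\otimes_{\mathbb F_p} M$ as $L$-modules, with $L$ acting only on the $M$ factor. Thus to reduce total degree $2$ to just the $E_2^{2,0}=H^2(L,M)$ term, it suffices to prove
\[
H^1(K,\mathbb F_p)=0\qquad\text{and}\qquad H^2(K,\mathbb F_p)=0,
\]
since these vanishings kill $E_2^{1,1}$ and $E_2^{0,2}$ respectively; the spectral sequence then collapses at total degree $2$ so that $H^2(\tilde G,M)=E_\infty^{2,0}$, a subquotient of $E_2^{2,0}=H^2(L,M)$, which yields the desired inequality.

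For the two vanishings: $K$ is the (profinite) direct product of the remaining factors $(A_n^\ast)^{2^n}$. Each $A_n^\ast$ is perfect by Proposition \ref{perfectcover}, so $K$ is topologically perfect, and hence $H^1(K,\mathbb F_p)=\Hom(K_{ab},\mathbb F_p)=0$. For $H^2$, I would invoke the universal coefficient sequence
\[
0\to \Ext^1_{\hat{\mathbb Z}}(K_{ab},\mathbb F_p)\to H^2(K,\mathbb F_p)\to \Hom(M(K),\mathbb F_p)\to 0.
\]
The first term vanishes because $K_{ab}=0$. For the third, the same product/functoriality argument used earlier in Section \ref{Schur} to compute $M(G)$ and $M(\tilde G)$ shows $M(K)=\prod_n M(A_n^\ast)^{2^n}$; but by Proposition \ref{perfectcover} each $M(A_n^\ast)=0$, so $M(K)=0$ and hence $H^2(K,\mathbb F_p)=0$.

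The main technical obstacle is justifying the product decomposition of $M(K)$ and the universal coefficient sequence in this profinite infinite-product setting; I would handle this exactly as in Section \ref{Schur}, by noting that profinite homology commutes with inverse limits and that both $\Ext^1_{\hat{\mathbb Z}}(-,\mathbb F_p)$ and $\Hom(-,\mathbb F_p)$ of the relevant (trivial) groups vanish termwise. Once these vanishings are in hand, the LHS spectral sequence argument is routine and gives the stated bound.
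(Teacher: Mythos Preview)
Your argument is correct, but it takes the mirror-image route to the paper's. Both proofs use the Lyndon--Hochschild--Serre spectral sequence for the direct product $\tilde G=K\times L$, but the paper takes $L$ as the normal subgroup, obtaining $E_2^{p,q}=H^p(K,H^q(L,M))$. Then $E_2^{2,0}=H^2(K,M^L)=0$ because $M$ is a non-trivial irreducible $L$-module, and $E_2^{1,1}=H^1(K,H^1(L,M))=\Hom(K,H^1(L,M))=0$ because $K$ is perfect and acts trivially on $H^1(L,M)$; only $E_2^{0,2}=H^2(L,M)^K$ survives.

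You instead take $K$ as the normal subgroup and kill $E_2^{1,1}$ and $E_2^{0,2}$ by proving $H^1(K,\mathbb F_p)=H^2(K,\mathbb F_p)=0$. The $H^1$ vanishing is the same perfectness fact the paper uses, but your $H^2$ vanishing requires the extra input $M(K)=0$, i.e.\ Proposition~\ref{perfectcover} applied to every factor of $K$ together with the product computation of the Schur multiplier from Section~\ref{Schur}. So the paper's version is marginally more economical: it only needs that $K$ is perfect and that $M^L=0$, and avoids computing $M(K)$. On the other hand, your route makes the structure of the collapse very transparent (everything reduces to the vanishing of low-degree cohomology of $K$ with trivial coefficients), and the same computation of $M(K)$ reappears anyway later in the section when handling trivial $M$.
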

\begin{proof}
By the Lyndon-Hochschild-Serre spectral sequence we know
\begin{multline*}
\dim H^2(\tilde{G},M) \leq  \dim H^2(K,M^L) + \\
\dim H^1(K,H^1(L,M)) + \dim H^2(L,M)^K.
\end{multline*}
Since $M$ is irreducible and non-trivial as an $L$-module, $M^L=0$. Since the actions of $K$ on $L$ and $M$ are trivial, the action of $K$ on $H^1(L,M)$ is trivial. Moreover, since $H^1(L,M)$ is abelian and $K$ is perfect, we have $H^1(K,H^1(L,M))=\Hom(K,H^1(L,M))=0$. So $$\dim H^2(G,M) \leq \dim H^2(L,M)^K \leq \dim H^2(L,M).$$
\end{proof}

Write $J$ for the kernel of the $L$-action on $M$: $J$ is a finite product of copies of $\mathbb{Z}/2\mathbb{Z}$.

\begin{lem}
\label{Jcohom}
If $p \neq 2$ or $J$ is trivial, $H^j(J,M)$ is $0$ for $j \geq 1$. If $p=2$ and $J \neq 1$, $H^j(J,M)$ is isomorphic, as an $L/J$-module, to a finite direct sum of copies of $M$.
\end{lem}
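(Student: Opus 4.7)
The plan is to split into three cases depending on whether $p=2$ and whether $J$ is trivial. When $J=1$ the statement is immediate. When $J \neq 1$ and $p$ is odd, since $J$ is an elementary abelian $2$-group, $|J|$ is a power of $2$ and hence coprime to $p$; therefore $|J|$ acts invertibly on $M$ while simultaneously annihilating $H^j(J,M)$ for $j\geq 1$, so these groups vanish.

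The interesting case is $p=2$ with $J\neq 1$. The first step will be to show that $J$ is contained in the centre $Z(L)=\prod_i Z_i$. For each $i$, let $\pi_i\colon L\to A_{n_i}$ be the composition of the $i$-th coordinate projection with the quotient $A^*_{n_i}\twoheadrightarrow A_{n_i}$. Since $J$ is normal in $L$, the image $\pi_i(J)$ is normal in the simple group $A_{n_i}$, hence equals $1$ or $A_{n_i}$. A surjection $J\twoheadrightarrow A_{n_i}$ is impossible because $|J|$ is a power of $2$ while $|A_{n_i}|$ has odd prime divisors (recall $n_i\geq N\geq 15$). So $\pi_i(J)=1$ for all $i$, and $J\subseteq \prod_i Z_i = Z(L)$.

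The second step is the cohomology computation. Since $J$ acts trivially on $M$ and $M$ is a free $\mathbb{F}_2$-module, there is a canonical isomorphism
\[
H^j(J,M)\;\cong\; H^j(J,\mathbb{F}_2)\otimes_{\mathbb{F}_2} M,
\]
functorial in $M$. As an $L$-module the right-hand side is $H^j(J,\mathbb{F}_2)$ (with its natural $L$-action coming from conjugation on $J$) tensored with $M$. But $J$ is central in $L$, so $L$ acts trivially on $J$ and therefore trivially on $H^j(J,\mathbb{F}_2)$. Consequently $H^j(J,M)$ is, as an $L$-module, a direct sum of $d_j:=\dim_{\mathbb{F}_2} H^j(J,\mathbb{F}_2)$ copies of $M$. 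Since $J\subseteq L$ already acts trivially on each copy of $M$, this $L$-action descends to an $L/J$-action, giving the claimed description.

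The main obstacle will be the centrality step: this is where the specific structure of $L$ as a product of quasisimple covers of alternating groups enters, and where the hypothesis that $J$ is $2$-elementary is used essentially (to rule out $\pi_i(J)=A_{n_i}$). The cohomological step is then routine, using only that cohomology with trivial coefficients distributes over direct sums of copies of $\mathbb{F}_p$ and that conjugation of $L$ on a central subgroup is trivial.
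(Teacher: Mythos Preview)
Your proof is correct and follows essentially the same approach as the paper. Both arguments use that $J$ is central in $L$ (the paper asserts this via the preceding sentence that $J$ is a product of copies of $\mathbb{Z}/2\mathbb{Z}$, while you supply a short argument for it) to reduce the $L/J$-action on cochains to the action on $M$ alone; you package this via the K\"unneth-type isomorphism $H^j(J,M)\cong H^j(J,\mathbb{F}_2)\otimes_{\mathbb{F}_2} M$, whereas the paper works directly with $\Hom_J(F_\ast,M)$ for a finite-type free resolution and observes that each term is a finite direct sum of copies of $M$ with diagonal $L/J$-action, concluding by semisimplicity of the cochain complex.
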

\begin{proof}
For $p \neq 2$, this is \cite[Corollary 7.3.3]{RZ}; for $J$ trivial, it is trivial. For $p=2$, we can use the explicit description of the $L/J$-action given in \cite[Section III.8]{Brown}. Using the notation there, we have $\alpha=\mathrm{id}_J$ because $J$ is central in $L$, so we can calculate the $L/J$-action using a finite type free resolution $F_\ast$ of $\mathbb{F}_2$ as a $J$-module, and the action of $L/J$ on $\Hom_J(F_\ast,M)$ is given by $(g \cdot f)(x) = g(f(x))$. Now after fixing a basis for $F_j$, $\Hom_J(F_j,M)$ is clearly a finite direct sum of copies of $M$ indexed by this basis, on which $L/J$ acts diagonally. Since this is a cochain complex of semisimple $L/J$-modules, its homology groups are semisimple, with all their simple factors isomorphic to $M$.
\end{proof}

\begin{lem}
$\dim H^2(L,M) \leq 3\dim M$.
\end{lem}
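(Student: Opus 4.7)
The plan is to apply the Lyndon--Hochschild--Serre spectral sequence for the central extension $1 \to J \to L \to L/J \to 1$ and reduce the problem to bounds on the cohomology of a single quasisimple factor on an irreducible faithful module. The spectral sequence gives
\[
\dim H^2(L,M) \leq \dim H^0(L/J, H^2(J,M)) + \dim H^1(L/J, H^1(J,M)) + \dim H^2(L/J, M).
\]

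First I would dispose of the easy case $p \neq 2$ or $J=1$: by Lemma \ref{Jcohom} the first two terms vanish outright, so we only need $\dim H^2(L/J,M) \leq 3 \dim M$. In the remaining case $p=2$, $J \neq 1$, Lemma \ref{Jcohom} says $H^j(J,M)$ is a direct sum of copies of $M$ as an $L/J$-module, so $H^i(L/J, H^j(J,M))$ is the same direct sum of copies of $H^i(L/J, M)$. Since $M$ is non-trivial and irreducible as an $L/J$-module, $H^0(L/J,M)=M^{L/J}=0$, and hence the $H^0(L/J, H^2(J,M))$ term vanishes automatically. So in every case the bound reduces to controlling $H^1(L/J, M)$ and $H^2(L/J, M)$, with multiplicity factors coming from the number of copies of $M$ inside the $H^j(J,M)$.

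Next I would exploit the product structure $L/J \cong S_1 \times \cdots \times S_t$, where each $S_i$ is either $A_{n_i}$ or $A^\ast_{n_i}$ and acts non-trivially on $M$. Since $M$ is irreducible over a product of quasisimple groups it decomposes as a tensor product $M \cong M_1 \otimes \cdots \otimes M_t$ with each $M_i$ a non-trivial irreducible faithful $S_i$-module. The Künneth formula gives
\[
H^i(L/J, M) \cong \bigoplus_{a_1+\cdots+a_t=i} H^{a_1}(S_1,M_1) \otimes \cdots \otimes H^{a_t}(S_t,M_t),
\]
and since $H^0(S_k,M_k)=0$ for each $k$, every summand with some $a_k=0$ vanishes. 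Hence $H^i(L/J,M)=0$ as soon as $t>i$. In particular $H^1(L/J,M)=0$ whenever $t \geq 2$, and $H^2(L/J,M)=0$ whenever $t \geq 3$; the only case in which the multiplicity factor from $H^1(J,\mathbb{F}_2)$ could matter is $t=1$, where $J$ is contained in the (order-2) centre of a single $A^\ast_{n_1}$ and so contributes multiplicity at most $1$.

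Finally I would invoke known cohomological bounds for single alternating groups and their double covers. Concretely I need $\dim H^i(S,V) \leq \dim V$ for $i=1,2$, $S \in \{A_n, A^\ast_n\}$ with $n \geq 15$, and $V$ a non-trivial irreducible faithful $S$-module in any characteristic; such estimates follow from low-degree cohomology computations for alternating groups in the literature (e.g., Guralnick--Tiep and Bendel--Nakano--Pillen--Stewart). Combining the three surviving contributions then yields a total of at most $3 \dim M$. The main obstacle is precisely this last step: getting the explicit constant $3$ requires citing or verifying the right uniform bounds on $H^1$ and $H^2$ of $A_n$ and $A^\ast_n$ on non-trivial irreducible faithful modules, and checking that the assumption $n \geq 15$ is enough to make these bounds hold.
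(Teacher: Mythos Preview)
Your strategy is exactly the paper's: apply the Lyndon--Hochschild--Serre spectral sequence for $1 \to J \to L \to L/J \to 1$, use Lemma~\ref{Jcohom} to kill the $H^0(L/J,H^2(J,M))$ term, and reduce to literature bounds on low-degree cohomology of alternating groups. Two points, however, deserve care.

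First, your K\"unneth argument for $t>1$ assumes $M \cong M_1 \otimes \cdots \otimes M_t$ over $\mathbb{F}_p$, but irreducible modules for a direct product over a non-algebraically-closed field need not decompose this way. One can repair this by extending scalars to $\overline{\mathbb{F}_p}$, but the paper avoids the issue entirely by citing \cite[Lemma~5.2(4)]{GKKL}, which directly gives $H^1(L/J,M)=0$ and $\dim H^2(L/J,M) \leq \tfrac14 \dim M$ for $t>1$.

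Second, your hoped-for bound $\dim H^2(A_n,V) \leq \dim V$ is stronger than what is available. The paper uses the GKKL estimates: in the worst case $t=1$, $p=2$ (where $J=\mathbb{Z}/2\mathbb{Z}$ is forced by \cite[Lemma~4.1(3)]{GKKL}), one has $\dim H^2(A_n,M) \leq \tfrac{35}{12}\dim M$ and $\dim H^1(A_n,M) \leq \tfrac{1}{12}\dim M$ for $n \geq 15$ from \cite[Theorems~6.1(1), 6.2(3)]{GKKL}, summing to exactly $3\dim M$. For $t=1$, $p \neq 2$, the paper gets $0$ (when $J$ is trivial, via \cite[Lemma~4.1(2)]{GKKL}) or $\leq \dim M$ (when $J=\mathbb{Z}/2\mathbb{Z}$, via \cite[Lemma~4.1(2), Theorem~6.2(1)(2)]{GKKL}). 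So the constant $3$ really comes from the GKKL constants $\tfrac{35}{12}+\tfrac{1}{12}$, not from a uniform $H^i \leq \dim V$ bound.
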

\begin{proof}
Once again, the Lyndon-Hochschild-Serre spectral sequence gives
\begin{multline*}
\dim H^2(L,M) \leq  \dim H^2(L/J,M) + \\
\dim H^1(L/J,H^1(J,M)) + \dim H^2(J,M)^{L/J}.
\end{multline*}
We know $H^2(J,M)^{L/J}=M^{L/J}=0$ by Lemma \ref{Jcohom}.

If $t>1$, $H^1(L/J,M)=0$ and $\dim H^2(L/J,M) \leq (1/4)\dim M$, by \cite[Lemma 5.2(4)]{GKKL}, and by Lemma \ref{Jcohom} we deduce $H^1(L/J,H^1(J,M))=0$, so $\dim H^2(L,M) \leq (1/4)\dim M$.

If $t=1$ (so $L$ is $A^\ast_n$ for some $n$) and $p \neq 2$, by Lemma \ref{Jcohom} we have $H^1(L/J,H^1(J,M))=0$, and then either $J$ is trivial and $H^2(L,M) = H^2(L/J,M) = 0$ by \cite[Lemma 4.1(2)]{GKKL}, or $J$ is non-trivial, so $J=\mathbb{Z}/2\mathbb{Z}$, $L/J=A_n$, and $$\dim H^2(L,M) =  \dim H^2(L/J,M) \leq \dim M$$ by \cite[Lemma 4.1(2), Theorem 6.2(1)(2)]{GKKL}.

If $t=1$ and $p=2$, by \cite[Lemma 4.1(3)]{GKKL}, $J$ is non-trivial, so $J=\mathbb{Z}/2\mathbb{Z}$ and
\begin{align*}
H^2(L,M) &\leq \dim H^2(L/J,M) + \dim H^1(L/J,M) \\
&\leq (35/12)\dim M + (1/12)\dim M = 3 \dim M
\end{align*}
for $n \geq 15$ by \cite[Theorem 6.1(1), Theorem 6.2(3)]{GKKL}.
\end{proof}

This proves the theorem.

\section{Positively finitely presented groups}\label{sec:posfinpres}

%\begin{lem}\label{lem:grphom}
%Let $F_n$ be the free profinite group on $n$ generators. Then $P_R(G,n) > 0$ if and only if 
%\begin{equation*}
%\mu_{\Hom(F_n,G)}(\{\varphi\in \Hom(F_n,G) \mid \varphi \text{ surjective} \}) >0.
%\end{equation*}
%\end{lem}
%\begin{proof}
%Fix a basis $x_1,\ldots,x_n$ for $F_n$. Under the isomorphism $\Hom(F_n,G) \cong G^n$, surjections $f:F_n \to G$ correspond to subsets $\{f(x_1),\ldots,f(x_n)\}$ which generate $G$.
%\end{proof}

Given that the idea of PFR is an higher analogue of PFG, an alternative condition would require that $G$ has a `PFG presentation'. By Lemma \ref{fratcover}, every PFG group admits a short exact sequence of the form 
\begin{equation}\label{eq:posfinpres}
1 \to R \to P \to G \to 1
\end{equation}
with $P$ a PFG projective profinite group. In this section, we will think of such sequences as presentations for $G$.

Let $G$ be a profinite group and let $A$ be a normal subgroup of $G$. We say that $A$ is \emph{positively finitely normally generated} in $G$ if there exists $k\in \mathbb{N}$ such that, defining the set $$X^{G}(A,k)=\{(a_1,\ldots,a_k)\in A^k \mid \langle a_1,\ldots,a_k \rangle^G = A \},$$ we have $P^{G}(A,k) :=\mu_{A^k}(X^{G}(A,k)) >0$. It is easy to see, by the same argument as \cite[Section 3.3]{KV}, that $A$ is positively finitely normally generated in $G$ if and only if the number of open maximal $G$-stable subgroups of index $n$ in $A$ grows polynomially in $n$. 

\begin{defn}
\label{defn:posfinpres}
A profinite group $G$ is said to be \emph{positively finitely presented}\footnote{We choose to avoid abbreviating this to PFP because of potential clashes with some future paper about a positively type $\FP$ condition.} if $G$ is PFG and for every short exact sequence \eqref{eq:posfinpres} with $P$ a PFG projective profinite group, $R$ is PFG as a normal subgroup of $P$.
\end{defn}

We justify our use of the term positively finitely presented by showing that groups satisfying this condition are finitely presented.

\begin{prop}\label{prop:PFG and fin pres}
A profinite group is positively finitely presented if and only if it is PFG and finitely presented.
%Positively finitely presented groups are finitely presented. Finitely presented PFG groups are positively finitely presented.
\end{prop}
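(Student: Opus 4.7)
The plan is to treat the two directions separately. The backward direction $(\Leftarrow)$ is the easy one, following directly from Proposition \ref{prop:PFGimpliesPFR} together with the definition of PFR. The forward direction $(\Rightarrow)$ is the substantive one: deducing standard finite presentability from positive finite presentability requires passing through a cohomological criterion, because the definition of positively finitely presented only quantifies over PFG projective covers (not free profinite groups of finite rank, which are never PFG unless procyclic, by Remark \ref{rem:prop6.1notfingen}).

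For $(\Leftarrow)$: assume $G$ is PFG and finitely presented. Proposition \ref{prop:PFGimpliesPFR} gives that $G$ is PFR. Now take any short exact sequence $1 \to R \to P \to G \to 1$ with $P$ a PFG projective profinite group. Since $P$ is in particular finitely generated, the definition of PFR applied to the epimorphism $P \twoheadrightarrow G$ yields that $R$ is positively finitely normally generated in $P$. Hence $G$ is positively finitely presented.

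For $(\Rightarrow)$: assume $G$ is positively finitely presented, so $G$ is PFG and in particular finitely generated. Let $\tilde{G} \twoheadrightarrow G$ be the universal Frattini cover, which is PFG by Lemma \ref{fratcover} and projective by construction. Applying the positive finite presentability hypothesis to this cover, the kernel $R = \ker(\tilde{G} \to G)$ is positively finitely normally generated, hence finitely normally generated by some elements $r_1,\ldots,r_m \in \tilde{G}$.

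To conclude that $G$ is finitely presented, I would appeal to the cohomological criterion of Lubotzky (\cite[Theorem 0.3]{Lubotzky}, used in the proof of Theorem \ref{PFRnotPFG}): it suffices to bound $\dim_{\mathbb{F}_p} H^2(G,M) \leq C \dim_{\mathbb{F}_p} M$ uniformly over all primes $p$ and irreducible $\mathbb{F}_p \llbracket G \rrbracket$-modules $M$. Given such an $M$, regard it as a $\tilde{G}$-module via the quotient, so $R$ acts trivially. Projectivity of $\tilde{G}$ forces $H^2(\tilde{G}, M) = 0$, so the five-term exact sequence attached to $1 \to R \to \tilde{G} \to G \to 1$ produces a surjection $H^1(R,M)^G \twoheadrightarrow H^2(G,M)$. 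Since $R$ acts trivially on $M$, we have $H^1(R,M) \cong \Hom_{cts}(R^{ab},M)$, and taking $G$-invariants gives $H^1(R,M)^G \cong \Hom_{\hat{\mathbb{Z}}\llbracket G \rrbracket}(R^{ab},M)$, where $R^{ab}$ is a $\hat{\mathbb{Z}}\llbracket G \rrbracket$-module generated by the images of $r_1,\ldots,r_m$. Every such homomorphism is determined by the images of these $m$ generators, so $\dim_{\mathbb{F}_p} H^2(G,M) \leq m \dim_{\mathbb{F}_p} M$, uniformly in $p$ and $M$. Lubotzky's criterion then gives that $G$ is finitely presented. The main point to be careful about is checking that each step of this cohomological reduction — the five-term sequence, the identification of $H^1(R,M)$ with $\Hom_{cts}(R^{ab},M)$, and the bound on $\Hom_G(R^{ab},M)$ — goes through cleanly in the profinite setting, but all of these are standard given the setup.
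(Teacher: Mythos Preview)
Your proof is correct. The backward direction matches the paper's exactly (PFG + finitely presented $\Rightarrow$ PFR by Proposition~\ref{prop:PFGimpliesPFR}, then apply the definition of PFR to any PFG projective cover).

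For the forward direction, however, the paper takes a shorter route. Rather than invoking Lubotzky's cohomological criterion \cite[Theorem~0.3]{Lubotzky} and running the five-term sequence, the paper simply observes that a PFG projective group $P$ is finitely generated, and finitely generated projective profinite groups are finitely presented by \cite[Proposition~1.1]{Lubotzky}; since the kernel $R$ is normally finitely generated in $P$, the quotient $G = P/R$ is finitely presented by the standard fact that quotients of finitely presented groups by normally finitely generated subgroups are finitely presented. Your approach is perfectly valid and in some sense unpacks this: your five-term sequence argument is essentially a direct cohomological proof that ``projective + kernel normally finitely generated $\Rightarrow$ quotient finitely presented'', bypassing the packaged statement of \cite[Proposition~1.1]{Lubotzky}. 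The paper's route is more economical, but yours has the virtue of being self-contained at the level of cohomology and not relying on an external black box for the finite presentability of projective groups.
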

\begin{proof}
Given a positively finitely presented group $G$, fix a presentation \eqref{eq:posfinpres} with $P$ PFG projective. Finitely generated projective profinite groups are finitely presented by \cite[Proposition 1.1]{Lubotzky}, so this exhibits $G$ as a quotient of a finitely presented group by a normally finitely generated group: it is standard that such groups are finitely presented.

PFG and finitely presented implies PFR by Proposition \ref{prop:PFGimpliesPFR}, and then PFR plus PFG imply positively finitely presented by \cite[Lemma 3.4]{KV}.
\end{proof}

In particular, PFG projective groups are positively finitely presented.

\begin{cor}
\label{pfp=pfr}
For PFG groups, positive finite presentation is equivalent to PFR.
\end{cor}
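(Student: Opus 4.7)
The plan is to show the corollary by combining two results already in hand: Proposition \ref{prop:PFGimpliesPFR} (PFG plus finitely presented implies PFR) and Proposition \ref{prop:PFG and fin pres} (positively finitely presented is equivalent to PFG plus finitely presented). So the only genuine content is to identify the role of ``finitely presented'' as the bridge between the two notions, within the PFG class.

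For the forward direction, assume $G$ is PFG and positively finitely presented. By Proposition \ref{prop:PFG and fin pres}, $G$ is finitely presented. Then Proposition \ref{prop:PFGimpliesPFR} gives that $G$ is PFR. This direction is immediate.

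For the backward direction, assume $G$ is PFG and PFR. I would first observe that any PFR group is finitely presented: by definition, PFR requires $G$ to be finitely generated, and for $F \twoheadrightarrow G$ a presentation with $F$ a finitely generated free profinite group, the kernel $R$ must be positively finitely normally generated in $F$, in particular finitely normally generated, so $G$ is finitely presented. Hence $G$ is PFG and finitely presented, and Proposition \ref{prop:PFG and fin pres} yields that $G$ is positively finitely presented.

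There is no real obstacle here; the only small point to be careful about is justifying that PFR implies finitely presented, which follows immediately from unpacking the definition. Everything else is assembled from the previously established propositions.
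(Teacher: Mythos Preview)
Your proof is correct and follows essentially the same approach as the paper: both directions hinge on Proposition~\ref{prop:PFG and fin pres} and Proposition~\ref{prop:PFGimpliesPFR}, with ``finitely presented'' serving as the bridge. Your explicit unpacking of why PFR implies finitely presented is a slight elaboration of what the paper simply asserts, but the structure is identical.
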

\begin{proof}
PFR groups are finitely presented, which with PFG implies positive finite presentation by Proposition \ref{prop:PFG and fin pres}. Positively finitely presented groups are PFG and finitely presented, which implies PFR by Proposition \ref{prop:PFGimpliesPFR}.
\end{proof}

The next two results show that the class of positively finitely presented profinite groups is well behaved.

\begin{lem}
\label{pfp-extension}
For $N$ a positively finitely presented normal subgroup of $G$, $G$ is positively finitely presented if and only if $G/N$ is.
\end{lem}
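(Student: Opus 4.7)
The plan is to apply Proposition \ref{prop:PFG and fin pres}, which identifies the class of positively finitely presented profinite groups with those groups that are simultaneously PFG and finitely presented. The statement then reduces to checking that each of these two properties behaves appropriately under the extension $1 \to N \to G \to G/N \to 1$, in both directions.

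For the ``only if'' direction, I assume $G$ is positively finitely presented. The class of PFG groups is closed under quotients by \cite[Proposition 11.2.1]{LS}, so $G/N$ is PFG. Since $N$ is itself PFG, it is finitely generated, and hence normally finitely generated in $G$; combined with $G$ being finitely presented, this implies that $G/N$ is finitely presented by the standard argument of lifting a finite normal generating set of $N$ and adjoining it to the defining relations of $G$.

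For the ``if'' direction, I assume $N$ and $G/N$ are both positively finitely presented, so each is PFG and each is finitely presented. Then $G$ is PFG by closure of PFG under extensions, again by \cite[Proposition 11.2.1]{LS}. For finite presentability of $G$, I would invoke the profinite analogue of the classical extension principle: given finite (profinite) presentations of $N$ and $G/N$, one builds a finite presentation of $G$ with generators obtained by lifting the generators of $G/N$ and adjoining the generators of $N$, and with relations being the defining relations of $N$, lifts of the defining relations of $G/N$ rewritten in terms of the generators of $N$, and finitely many commutation relations encoding the action of the lifts on $N$.

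The only mild obstacle is verifying that this classical extension principle for finite presentability carries over verbatim from the abstract to the profinite setting, with free profinite groups replacing free abstract groups; this is standard and introduces no new ideas. With it in hand, combining the two closure properties via Proposition \ref{prop:PFG and fin pres} yields the lemma immediately.
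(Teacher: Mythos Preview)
Your proof is correct and follows essentially the same approach as the paper: reduce via Proposition~\ref{prop:PFG and fin pres} to the separate closure properties of PFG and finite presentability under extensions and quotients, then combine. The paper's proof is simply a terser version of yours, stating the two equivalences (``$N$ is finitely presented, so $G$ is finitely presented if and only if $G/N$ is; $N$ is PFG, so $G$ is PFG if and only if $G/N$ is'') without spelling out the standard arguments you give.
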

\begin{proof}
$N$ is finitely presented, so $G$ is finitely presented if and only if $G/N$ is; $N$ is PFG, so $G$ is PFG if and only if $G/N$ is. So if one of $G$ and $G/N$ is positively finitely presented, the other is finitely presented and PFG, so it is positively finitely presented by Proposition \ref{prop:PFG and fin pres}.
\end{proof}

Compare this to the class of PFR groups: it remains an open question whether this is closed under extensions (\cite[p.3]{KV}).

We conclude this section by showing that $G$ being positively finitely presented is witnessed by its universal Frattini cover. Compare this to the class of PFR groups: in general minimal presentations are not sufficient to determine whether a group is PFR (\cite[Section 7]{KV}). The following lemma is a generalisation of \cite[Proposition 11.2.1]{LS} to \emph{positively finitely normally generated subgroups}.

\begin{lem}
\label{lem:pfng}
Let $G$ be a profinite group, $B \lhd A \lhd G$ with $B$ normal in $G$. Suppose that $A/B$ is positively finitely normally generated in $G/B$ and $B$ is positively finitely normally generated in $G$. Then $A$ is positively finitely normally generated in $G$.
\end{lem}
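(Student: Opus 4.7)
The plan is to choose $k_1, k_2$ with $P^{G/B}(A/B,k_1)>0$ and $P^{G}(B,k_2)>0$, and then show $P^{G}(A,k_1+k_2)>0$ via a two-stage sampling argument on the product $A^{k_1+k_2}$. Draw $(a_1,\ldots,a_{k_1+k_2})$ uniformly from $A^{k_1+k_2}$, write $\mathbf{a}=(a_1,\ldots,a_{k_1})$ and $\mathbf{b}=(a_{k_1+1},\ldots,a_{k_1+k_2})$, and let $E_1$ be the event that the image of $\mathbf{a}$ in $(A/B)^{k_1}$ normally generates $A/B$ in $G/B$. By the second part of Lemma \ref{lem:11.1.1} applied coordinatewise to $A\twoheadrightarrow A/B$, we have $\mu(E_1)=P^{G/B}(A/B,k_1)>0$.

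Fix $\mathbf{a}\in E_1$ and set $N=N_{\mathbf{a}}:=\overline{\langle a_1,\ldots,a_{k_1}\rangle^G}$, which is normal in $G$, contained in $A$, and satisfies $NB=A$. The second isomorphism theorem provides a $G$-equivariant isomorphism $B/(B\cap N)\cong A/N$ (equivariance is immediate because conjugation commutes with the inclusion $B\hookrightarrow A$). Let $Y_{\mathbf{a}}\subseteq(A/N)^{k_2}$ denote the set of tuples that normally generate $A/N$ in $G/N$, and $\widetilde{Y}_{\mathbf{a}}\subseteq A^{k_2}$ its preimage under $A^{k_2}\twoheadrightarrow(A/N)^{k_2}$. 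The image of $X^{G}(B,k_2)$ under $B^{k_2}\twoheadrightarrow(B/(B\cap N))^{k_2}$, identified via the $G$-equivariant isomorphism with a subset of $(A/N)^{k_2}$, lies inside $Y_{\mathbf{a}}$, so the first part of Lemma \ref{lem:11.1.1} yields $\mu(Y_{\mathbf{a}})\geq P^{G}(B,k_2)$, and the second part gives $\mu(\widetilde{Y}_{\mathbf{a}})=\mu(Y_{\mathbf{a}})\geq P^{G}(B,k_2)>0$, \emph{uniformly} in $\mathbf{a}\in E_1$.

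Then Fubini applied to the measurable set
\[
Z=\{(\mathbf{a},\mathbf{b})\in A^{k_1}\times A^{k_2}:\mathbf{a}\in E_1,\ \mathbf{b}\in \widetilde{Y}_{\mathbf{a}}\}
\]
produces $\mu_{A^{k_1+k_2}}(Z)\geq P^{G/B}(A/B,k_1)\cdot P^{G}(B,k_2)>0$. Finally, any $(\mathbf{a},\mathbf{b})\in Z$ lies in $X^{G}(A,k_1+k_2)$: writing $M=\overline{\langle a_1,\ldots,a_{k_1+k_2}\rangle^G}$, we have $M\supseteq N$, and since the images of $\mathbf{b}$ in $A/N$ normally generate $A/N$ in $G/N$, we get $M/N=A/N$, hence $M=A$. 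This proves $P^{G}(A,k_1+k_2)>0$.

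The main obstacle is obtaining the lower bound on $\mu(\widetilde{Y}_{\mathbf{a}})$ \emph{uniformly in $\mathbf{a}$}; this is what allows Fubini to combine the two events. The key point making this work is the $G$-equivariance of $B/(B\cap N)\cong A/N$, which lets us transfer the sampling problem from $A$ to $B$ where we already have a positive uniform lower bound on normal generation. A minor technical point is the measurability of $\mathbf{a}\mapsto\mu(\widetilde{Y}_{\mathbf{a}})$, but this is routine since $N_{\mathbf{a}}$ depends continuously on $\mathbf{a}$ and all relevant sets are closed.
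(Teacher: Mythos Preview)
Your proof is correct and rests on the same key idea as the paper's --- generate $A$ modulo $B$ with the first batch of elements, then use the $G$-equivariant isomorphism $B/(B\cap N)\cong A/N$ to transfer the remaining problem to $B$, where positive normal generation is assumed --- but the execution differs. The paper first establishes the inequality $P^G(A,k+l)\geq P^{G/B}(A/B,k)\,P^G(B,l)$ for \emph{finite} $A$ by an explicit count (parametrising the second batch of elements as $b_iu_i$ with $b_i\in X^G(B,l)$ and $u_i$ ranging over $\langle\underline a\rangle^G$), and then passes to the profinite case by writing $X^G(A,k+l)$ as an inverse limit over a basis of $G$-invariant open normal subgroups. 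You instead work directly on the profinite group using Haar measure, Lemma~\ref{lem:11.1.1}, and Fubini, which avoids the two-step reduction and is arguably cleaner; both routes produce the same numerical lower bound.

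One minor remark: your aside that ``$N_{\mathbf a}$ depends continuously on $\mathbf a$'' is unnecessary and not obviously well-posed. Since $\widetilde Y_{\mathbf a}=\{\mathbf b:\langle\mathbf a,\mathbf b\rangle^G=A\}$ for every $\mathbf a$, one has $Z=(E_1\times A^{k_2})\cap X^G(A,k_1+k_2)$, an intersection of two closed sets; measurability of $Z$, and hence of the section function needed for Fubini, is immediate from this.
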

\begin{proof}
We first consider the case with $A$ (and hence $B$) finite. Let $\pi: G^k \to (G/B)^k$ be the obvious projection, and pick $\underline{a} \in \pi^{-1}(X^{G/B}(A/B,k))$, $\underline{b}\in X^G(B,k)$ and $\underline{u} \in (\langle\underline{a} \rangle^G)^l$; then $\langle \underline{b}\cdot \underline{u},\underline{a}\rangle^G =A$ (the product $\underline{b}\cdot \underline{u}$ is componentwise). Thus we can estimate the probability of normally generating $A$ in $G$, by counting the possible choices for the elements $\underline{a}$, $\underline{b}$ and $\underline{u}$: there are at least $$ \vert B \vert^k \cdot \vert X^{G/B}(A/B,k) \vert \cdot \vert A/B \vert^{l} \cdot \vert X^G(B,l) \vert $$ choices of $k+l$ elements generating $A$, and we conclude that $P^G(A,k+l)$ is at least $$\frac{\vert B \vert^k  \vert X^{G/B}(A/B,k) \vert}{\vert A \vert^k} \frac{\vert A/B \vert^{l} \vert X^G(B,l) \vert}{\vert A\vert^l}= P^{G/B}(A/B,k) P^G(B,l).$$

Now suppose $A$ is profinite. Note that $A$ has a neighbourhood basis $\mathcal{N}$ of the identity consisting of open normal subgroups which are $G$-invariant: this can be achieved by intersecting a basis of open normal subgroups of $G$ with $A$.

For a subset $X\subset A$, $\langle X\rangle^G = A$ if and only if $\langle XN/N \rangle^{G/N}=A/N$ for all $N\in \mathcal{N}$ (this is \cite[Proposition 4.1.1]{Wilson} with minor modifications). This implies that $X^G(A,k)$ is the inverse limit over $\mathcal{N}$ of $X^{G/N}(A/N,k)$ and hence
\begin{align*}
P^G(A,k+l) &= \mu_{A^k}(X^G(A,k+l)) \\
&= \inf_{n\in \mathcal{N}} \frac{\vert X^{G/N}(A/N,k+l) \vert}{\vert A/N\vert^{k+l}} \\
&= \inf_{n\in \mathcal{N}} P^{G/N}(A/N,k+l) \\
&\geq \inf_{n\in \mathcal{N}} P^{G/BN}(A/BN,k) P^{G/N}(BN/N,l) \\
&= P^{G/B}(A/B,k) P^G(B,l),
\end{align*}
which is positive for some choice of $k$ and $l$ by hypothesis.
\end{proof}

\begin{prop}
\label{pfp-frattinicover}
Let $G$ be a PFG profinite group and let $f: \tilde{G}\to G$ be the universal Frattini cover of $G$. Write $R$ for the kernel of this map. If $R$ is positively normally finitely generated in $\tilde{G}$, $G$ is positively finitely presented.
\end{prop}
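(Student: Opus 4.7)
The plan is to reduce this to the already-proved equivalence in Proposition \ref{prop:PFG and fin pres}, i.e.\ to show that $G$ is PFG (which is hypothesised) and finitely presented.

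First I would observe that $\tilde{G}$ is automatically PFG and projective: projectivity is by definition of the universal Frattini cover (see Section \ref{sec:frattinicovers}), and PFG follows from Lemma \ref{fratcover} applied to the Frattini cover $f \colon \tilde{G} \to G$. By \cite[Proposition 1.1]{Lubotzky} (quoted in the proof of Proposition \ref{prop:PFG and fin pres}), every finitely generated projective profinite group is finitely presented, so $\tilde{G}$ is finitely presented.

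Next, the hypothesis that $R$ is \emph{positively} normally finitely generated in $\tilde{G}$ trivially implies that $R$ is normally finitely generated in $\tilde{G}$, i.e.\ there exist finitely many elements of $R$ whose normal closure in $\tilde{G}$ is $R$. Hence $G = \tilde{G}/R$ is the quotient of a finitely presented profinite group by a normally finitely generated closed normal subgroup, and so $G$ itself is finitely presented by the standard argument (one adds the finitely many normal generators of $R$ as relators to a finite presentation of $\tilde{G}$).

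Combining this with the hypothesis that $G$ is PFG, Proposition \ref{prop:PFG and fin pres} immediately yields that $G$ is positively finitely presented. There is no real obstacle here: the work has already been done in Lemma \ref{fratcover}, Proposition \ref{prop:PFG and fin pres}, and the choice of the universal Frattini cover as the canonical PFG projective presentation. In particular, one does not need to verify the PFG condition on the kernel for every PFG projective presentation $P \twoheadrightarrow G$; the equivalence with finite presentability bypasses this.
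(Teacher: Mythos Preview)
Your proof is correct, but it takes a genuinely different route from the paper's. The paper verifies Definition~\ref{defn:posfinpres} directly: given an arbitrary PFG projective presentation $1 \to S \to Q \to G \to 1$, it factors $Q \twoheadrightarrow G$ through the universal Frattini cover $\tilde{G}$, obtains $S/T \cong R$ with $T = \ker(Q \to \tilde{G})$, and then applies the extension Lemma~\ref{lem:pfng} (proved specifically for this purpose) to conclude that $S$ is positively normally finitely generated in $Q$. Your argument instead reduces immediately to Proposition~\ref{prop:PFG and fin pres} by extracting only ordinary normal finite generation of $R$ from the hypothesis and deducing that $G$ is finitely presented.

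Each approach has its merits. The paper's direct argument is self-contained relative to the probabilistic definitions and does not invoke the chain of implications PFG $\Rightarrow$ APFG $\Rightarrow$ UBERG $\Rightarrow$ PFR hidden inside Proposition~\ref{prop:PFG and fin pres}; it also justifies the presence of Lemma~\ref{lem:pfng} in the paper. Your argument, on the other hand, is shorter and makes a sharper point: once Proposition~\ref{prop:PFG and fin pres} is available, the \emph{positive} part of the hypothesis on $R$ is never used---normal finite generation of $R$ (together with $G$ being PFG) already suffices. This is worth noting explicitly, since it shows the proposition holds under a weaker assumption than stated.
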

\begin{proof}
Since $\tilde{G}$ is a projective cover of $G$, if $1 \to S \to Q \to G \to 1$ is another presentation of $G$ with $Q$ PFG, then $Q \to G$ factors into an epimorphism $Q \to \tilde{G}$ and $f$. The diagram
\[\xymatrix{
S \ar[r] \ar@{->>}[d] & Q \ar[r] \ar@{->>}[d] & G \ar@{=}[d] \\
R \ar[r] & \tilde{G} \ar[r] & G
}\]
has exact rows; writing $T$ for the kernel of $Q \to \tilde{G}$, we get $S/T \cong R$ by the Nine Lemma. Since $R$ is positively finitely normally generated in $\tilde{G}$, it has polynomial maximal $\tilde{G}$-stable subgroup growth. $\tilde{G}$ is positively finitely generated and projective, hence positively finitely presented, so $T$ has polynomial maximal $Q$-stable subgroup growth. By Lemma \ref{lem:pfng} applied to $S$ as an extension of $T$ by $R$, we have that $S$ %has polynomial maximal $Q$-stable subgroup growth, and hence 
is positively normally finitely generated in $Q$, as required.
\end{proof}

\section{Modules of type \texorpdfstring{$\PFP_n$}{PFPn}}\label{sec:PFP_modules}

Let $R$ be a profinite ring. In this section, all modules will be profinite $R$-modules. %, all homomorphisms will be assumed to be continuous and generation will be intended in the topological sense.  

%\subsection{PFG modules}\label{sec:PFG_modules}
%
%\begin{lem}\label{lem:hom}
%Let $M$ be an $R$-module. Then $P_R(M,k) > 0$ if and only if 
%\begin{equation*}
%\mu_{\Hom_R(R^k,M)}(\{\varphi\in \Hom_R(R^k,M) \mid \varphi \text{ surjective} \}) >0.
%\end{equation*}
%\end{lem}
%\begin{proof}
%Fix a basis $x_1,\ldots,x_n$ for $R^n$. Under the isomorphism $\Hom_R(R^n,M) \cong M^n$, surjections $f:R^n \to M$ correspond to subsets $\{f(x_1),\ldots,f(x_n)\}$ which generate $M$.
%\end{proof}

\subsection{Projective covers of PFG modules}

Let $M$ be an $R$-module. A submodule $N$ of a module $M$ is \emph{superfluous} if, for any submodule $H$ of $M$, $H+N=M$ implies $H=M$. 

\begin{defn}
A homomorphism $P\to M$, with $P$ projective, is said to be a \emph{projective cover} of $M$ if its kernel is a superfluous submodule of $P$.
\end{defn}

It is easy to see that, if $P_1\to M$ and $P_2\to M$ are two projective covers of $M$, then $P_1\cong P_2$.  %this property defines $P$ up to isomorphism, 
So we may abuse terminology by referring to $P$ itself as the projective cover of $M$, instead of the homomorphism $P\to M$. Profinite modules have projective covers by \cite[Remark 3.4.3(i)]{SW}.

\begin{lem}
\label{projcover}
$M$ is PFG if and only if its projective cover $P$ is.
\end{lem}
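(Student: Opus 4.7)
The plan is to prove both implications. The forward direction is immediate: $M$ is a quotient of $P$, so if $P$ is PFG then $M$ is PFG by Lemma \ref{lem:PFGquotext}. The substantive content is the converse, which relies on the superfluous kernel property.

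For the converse, write $\pi : P \to M$ for the projective cover, with superfluous kernel $K$, and let $\pi^{(k)} : P^k \to M^k$ denote the induced map on direct powers. The key step is to show
\[
X_R(P,k) = (\pi^{(k)})^{-1}(X_R(M,k)).
\]
The inclusion $\subseteq$ is clear, since any generating tuple of $P$ projects to a generating tuple of $M$. For $\supseteq$, suppose $(p_1,\ldots,p_k) \in P^k$ has image generating $M$. Then the closed submodule $N = \langle p_1,\ldots,p_k\rangle_R$ of $P$ satisfies $N+K = P$; since $K$ is superfluous, this forces $N = P$.

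With this set equality in hand, I would apply Lemma \ref{lem:11.1.1} to the surjective map of profinite abelian groups $\pi^{(k)} : P^k \to M^k$ (whose kernel is $K^k$) to conclude
\[
P_R(P,k) = \mu_{P^k}\!\left((\pi^{(k)})^{-1}(X_R(M,k))\right) = \mu_{M^k}(X_R(M,k)) = P_R(M,k).
\]
Thus $P_R(M,k) > 0$ for some $k$ if and only if $P_R(P,k) > 0$ for the same $k$, which gives the equivalence.

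The only non-routine point is the set-theoretic equality $X_R(P,k) = (\pi^{(k)})^{-1}(X_R(M,k))$, and this is precisely where the defining property of projective covers (superfluous kernel) is used; without it, a lift of a generating tuple of $M$ to $P$ need only generate $P$ modulo $K$. Everything else is either quotient-closure of PFG modules or the standard Haar-measure compatibility for profinite-group quotients.
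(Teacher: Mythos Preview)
Your proof is correct and follows essentially the same approach as the paper's: the paper simply observes (by analogy with Lemma~\ref{fratcover}) that any lift of a generating set of $M$ generates $P$ because the kernel is superfluous, and then invokes Lemma~\ref{lem:11.1.1}. You have made the same argument explicit by writing out the set equality $X_R(P,k) = (\pi^{(k)})^{-1}(X_R(M,k))$ and deducing $P_R(P,k) = P_R(M,k)$.
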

\begin{proof}
This is the same argument as Lemma \ref{fratcover}: we can assume that both $M$ and $P$ are finitely generated and observe that, for any generating set $S$ of $M$, any lift of $S$ to $P$ generates $P$, since the kernel is superfluous.
\end{proof}

\subsection{Modules of type \texorpdfstring{$\PFP_n$}{PFPn}}\label{sec:PFPn_modules} The previous lemma suggests the following definition. 

\begin{defn}
An $R$-module $M$ has \emph{type $\PFP_n$} if it has a projective resolution $P_\ast$ 
\begin{equation*}
\ldots \to P_n \to \ldots \to P_1\to P_0 \to M \to 0 %\tag{(P_\ast)}
\end{equation*}
with $P_0, \ldots, P_n$ PFG $R$-modules. The module $M$ has type $\PFP_\infty$ if it has a projective resolution $P_\ast$ with $P_n$ PFG for all $n$.
\end{defn}

\begin{prop}
\label{Schanuel}
Suppose we have two partial resolutions $$P_{n-1} \to \cdots \to P_0 \text{ and } Q_{n-1} \to \cdots \to Q_0$$ of $M$ with each $P_i$ and $Q_i$ PFG projective. Then $\ker(P_{n-1} \to P_{n-2})$ is PFG if and only if $\ker(Q_{n-1} \to Q_{n-2})$ is.
\end{prop}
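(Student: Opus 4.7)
The plan is to reduce this to the generalised Schanuel lemma for profinite modules and then apply the closure properties of PFG modules established in Lemma \ref{lem:PFGquotext}.

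First, I would invoke the generalised Schanuel lemma, applied to the two given partial projective resolutions. This lemma follows by a straightforward induction from the short Schanuel lemma already listed among the standard homological tools for profinite modules in the preliminaries, so we may appeal to it freely. Setting $K = \ker(P_{n-1} \to P_{n-2})$ and $K' = \ker(Q_{n-1} \to Q_{n-2})$ (with the convention $P_{-1} = Q_{-1} = M$), it produces an isomorphism of the form
$$K \oplus Q_{n-1} \oplus P_{n-2} \oplus Q_{n-3} \oplus \cdots \;\cong\; K' \oplus P_{n-1} \oplus Q_{n-2} \oplus P_{n-3} \oplus \cdots,$$
where both alternating sums are finite direct sums of the given PFG projectives.

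Next, since PFG modules are closed under extensions by Lemma \ref{lem:PFGquotext}, finite direct sums of PFG modules are PFG; in particular both of the alternating sums adjoined to $K$ and $K'$ above are PFG. Assuming now that $K$ is PFG, the left-hand side is a direct sum of two PFG modules and hence PFG; via the displayed isomorphism, the right-hand side is PFG as well, and since $K'$ is a direct summand — hence a quotient — of the right-hand side, the closure of PFG under quotients from Lemma \ref{lem:PFGquotext} gives that $K'$ is PFG. The converse direction is symmetric.

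I do not anticipate any serious obstacle; the only mildly non-trivial point is confirming the generalised Schanuel lemma in the profinite category, but this is a purely formal induction from the short version and does not use anything beyond the existence of pullbacks and pushouts of short exact sequences of profinite modules, both of which are standard.
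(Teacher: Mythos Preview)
Your proposal is correct and is essentially the same as the paper's own proof, which simply reads ``Schanuel's lemma'': you have just spelled out the details implicit in that citation, namely the generalised Schanuel isomorphism together with closure of PFG under extensions and quotients from Lemma~\ref{lem:PFGquotext}.
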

\begin{proof}
Schanuel's lemma.
\end{proof}

It follows that $M$ has type $\PFP_\infty$ if and only if it has type $\PFP_n$ for all $n$.

\begin{rem}
\begin{enumerate}[(i)]
\item By Lemma \ref{projcover}, an $R$-module has type $\PFP_0$ if and only if it is PFG.
\item Clearly, type $\PFP_n$ implies type $\FP_n$ for all $n$.
\item Note that, if $R$ is PFG as an $R$-module, then all finitely generated $R$-modules are PFG. Thus, type $\PFP_n$ coincides with type $\FP_n$ for PFG rings.
\end{enumerate}
\end{rem}

We will now show that the properties defined above behave well with respect to short exact sequences. See \cite{Weibel} for more detail on the constructions used.

\begin{prop}
\label{moduletypes}
Let $0 \to A \xrightarrow[]{f} B \xrightarrow[]{g} C \to 0$ be a short exact sequence of profinite $R$-modules.
\begin{enumerate}[(i)]
\item If $A$ has type $\PFP_{n-1}$ and $B$ has type $\PFP_n$, $C$ has type $\PFP_n$.
\item If $B$ has type $\PFP_{n-1}$ and $C$ has type $\PFP_n$, $A$ has type $\PFP_{n-1}$.
\item If $A$ and $C$ have type $\PFP_n$, so does $B$.
\end{enumerate}
\end{prop}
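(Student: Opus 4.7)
The plan is to prove all three parts by simultaneous induction on $n$, addressing them within each inductive step in the order (iii), (i), (ii). The base case $n = 0$ will be covered by Lemma \ref{lem:PFGquotext}: extension closure of PFG gives (iii), quotient closure gives (i), and (ii) is vacuous at $n = 0$ under the convention that type $\PFP_{-1}$ imposes no condition on a module.

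Before the induction, I would extract the following technical lemma: if $M$ has type $\PFP_n$ and $P \twoheadrightarrow M$ is any surjection with $P$ a PFG projective module, then $\ker(P \to M)$ has type $\PFP_{n-1}$. This is obtained by building a PFG projective resolution of the kernel one step at a time, invoking Proposition \ref{Schanuel} at each stage to certify that the successive kernel remains PFG so that the construction can continue, using the existence of PFG projective covers of PFG modules (Lemma \ref{projcover}).

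For the inductive step, fix $n \geq 1$ and assume (i)--(iii) hold for $n-1$. For (iii) I would choose PFG projective covers $P \twoheadrightarrow A$ and $R \twoheadrightarrow C$, apply the horseshoe lemma to lift them to a surjection $P \oplus R \twoheadrightarrow B$ (whose source is PFG by Lemma \ref{lem:PFGquotext}), and take kernels to obtain $0 \to K_A \to K_B \to K_C \to 0$; the technical lemma forces $K_A$ and $K_C$ to have type $\PFP_{n-1}$, and the inductive hypothesis for (iii) then gives $K_B$ of type $\PFP_{n-1}$, whence $B$ has type $\PFP_n$. For (i) I would take a PFG projective cover $Q \twoheadrightarrow B$, compose with $g$ to get $Q \twoheadrightarrow C$, and exploit the resulting short exact sequence $0 \to \ker(Q \to B) \to \ker(Q \to C) \to A \to 0$: the first term has type $\PFP_{n-1}$ by the technical lemma applied to $B$, the third has type $\PFP_{n-1}$ by hypothesis, and the just-established case (iii) at step $n$ then delivers $\ker(Q \to C)$ of type $\PFP_{n-1}$, so $C$ has type $\PFP_n$. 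For (ii) the same short exact sequence has outer terms of types $\PFP_{n-2}$ and $\PFP_{n-1}$ respectively (the latter coming from $C$ via the technical lemma), so the just-established case (i) at step $n$ yields $A$ of type $\PFP_{n-1}$.

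The main obstacle will be the preliminary technical lemma, which repackages Proposition \ref{Schanuel} into a form iteratively usable in an induction; once it is in hand, all three parts reduce to formal manipulations of short exact sequences, the only delicate point being the logical ordering (iii) $\Rightarrow$ (i) $\Rightarrow$ (ii) within each inductive step.
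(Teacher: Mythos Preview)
Your argument is correct, and for parts (ii) and (iii) it is essentially identical to the paper's: both use the horseshoe lemma for (iii) and the snake-lemma short exact sequence $0 \to \ker(Q \to B) \to \ker(Q \to C) \to A \to 0$ together with (i) for (ii). The genuine difference is in part (i). The paper avoids induction entirely there: it takes a type $\PFP_{n-1}$ resolution $P'_\ast$ of $A$, a type $\PFP_n$ resolution $P_\ast$ of $B$, lifts $f$ to a chain map, and observes that the mapping cone of $P'_\ast \to P_\ast$ is a type $\PFP_n$ resolution of $C$. This is a one-line argument once you have the mapping cone, whereas your route to (i) recycles the kernel exact sequence from (ii) and feeds it into (iii), which is why you are forced into a simultaneous induction. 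The mapping cone buys brevity and independence of the three parts; your approach buys the elimination of an extra homological construction, at the cost of the inductive scaffolding.

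One small correction: the ordering you flag as ``delicate'' is in fact irrelevant. In your proof of (i) you invoke (iii) on a sequence whose outer terms have type $\PFP_{n-1}$, so what you actually need is (iii) at level $n-1$, i.e.\ the inductive hypothesis, not the ``just-established case (iii) at step $n$''. Likewise in (ii) you only need (i) at level $n-1$. So the three parts can be proved in any order within each inductive step; the simultaneous induction is still required, but there is no internal dependency at the top level.
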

\begin{proof}
Recall that the class of PFG modules is closed under extensions by Lemma \ref{lem:PFGquotext}.
\begin{enumerate}[(i)]
\item Take a type $\PFP_{n-1}$ resolution $P'_\ast$ of $A$ and a type $\PFP_n$ resolution $P_\ast$ of $B$. There is a map $P'_\ast \to P_\ast$ extending $A \to B$. The mapping cone of $P'_\ast \to P_\ast$ is a type $\PFP_n$ resolution of $C$.
\item Fix a map $Q \stackrel{q}{\twoheadrightarrow} B$ with $Q$ PFG projective. Note that $Q$ has type $\PFP_\infty$. We have a diagram
\[
\xymatrix{
	\ker(q) \ar[r] \ar[d] & Q \ar@{->>}[r]^{q} \ar@{=}[d] & B \ar@{->>}[d]^{g} \\
	\ker(g \circ q) \ar[r] & Q \ar@{->>}[r] & C;}
\]
with exact rows. By Proposition \ref{Schanuel}, $\ker(q)$ is of type $\PFP_{n-2}$ and $\ker(g\circ q)$ is of type $\PFP_{n-1}$. By the snake lemma, $0 \to \ker(q) \to \ker(g\circ q) \to A \to 0$ is exact. By (i), $A$ has type $\PFP_{n-1}$.
\item Given a type $\PFP_n$ resolution for $A$ and another for $C$, the resolution for $B$ constructed using the horseshoe lemma has type $\PFP_n$.
\end{enumerate}
\end{proof}

\subsection{Growth conditions}

The famous Mann-Shalev Theorem in \cite{MS} characterises PFG groups algebraically as those profinite groups with ``few'' open maximal subgroups. We would like to mimic this theorem as well as producing a cohomological criterion for when modules have type $\PFP_n$.

\begin{defn}
For $R$ a profinite ring and $M$ a profinite $R$-module, let $m^R_k(M)$ be the number of maximal (open) submodules of $M$ of index $k$. We say that $M$ has \emph{polynomial maximal submodule growth}, or \emph{PMSMG} for short, if there is some constant $c>0$ such that $m^R_k(M) \leq k^c$ for all $k$.
\end{defn}

For a profinite ring $R$ and $k\in \mathbb{N}$, denote by $\mathcal{S}^R_k$ the set of irreducible $R$-modules of order $k$.

\begin{prop}\label{prop:PMSMG}
Let $M$ be a profinite $R$-module. Then the following conditions are equivalent:
\begin{enumerate}[(i)]
\item $M$ is PFG.
\item $M$ has PMSMG.
\item $\sum_{S \in \mathcal{S}^R_k} (\lvert \Hom_R(M,S) \rvert-1)$ has polynomial growth in $k$.
\end{enumerate} 
\end{prop}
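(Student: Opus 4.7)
The plan is to establish the three implications (ii) $\Leftrightarrow$ (iii), (ii) $\Rightarrow$ (i), and (i) $\Rightarrow$ (ii), with the last being the main obstacle.

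For (ii) $\Leftrightarrow$ (iii), I would use the bijection between maximal submodules $N \leq M$ with $M/N \cong S$ (for $S \in \mathcal{S}^R_k$ fixed) and the set of nonzero elements of $\Hom_R(M,S)$ modulo postcomposition by $\operatorname{Aut}_R(S)$, valid because any nonzero map into an irreducible is surjective. By Schur's lemma $\End_R(S)$ is a finite division ring, hence a finite field $\mathbb{F}_{q_S}$ with $2 \leq q_S \leq k$, so $|\operatorname{Aut}_R(S)| = q_S - 1 \in [1, k-1]$. Summing over $\mathcal{S}^R_k$ yields
\[
m^R_k(M) = \sum_{S \in \mathcal{S}^R_k} \frac{|\Hom_R(M,S)| - 1}{q_S - 1},
\]
which sandwiches $m^R_k(M) \leq \sum_{S}(|\Hom_R(M,S)|-1) \leq (k-1)\, m^R_k(M)$; hence polynomial growth of one side is equivalent to polynomial growth of the other.

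For (ii) $\Rightarrow$ (i) I would apply the standard union bound. Every maximal submodule of $M$ is open (finite index and closed), and every proper closed submodule lies in some maximal open submodule (using that $M$ has a basis of open submodules at $0$). Hence $M^k \setminus X_R(M,k) \subseteq \bigcup_N N^k$ over maximal $N$, so
\[
1 - P_R(M,k) \leq \sum_{l \geq 2} m^R_l(M) \cdot l^{-k}.
\]
Assuming $m^R_l(M) \leq l^c$, the right-hand side is at most $\sum_l l^{c-k}$, which tends to $0$ as $k \to \infty$, giving $P_R(M,k) > 0$ for $k$ large enough.

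The main obstacle is (i) $\Rightarrow$ (ii), the module analogue of the harder direction of Mann--Shalev. Since every maximal submodule contains the Jacobson radical $J(M)$, we have $m^R_k(M) = m^R_k(M/J(M))$, and $M/J(M)$ is PFG by Lemma \ref{lem:PFGquotext}, so I would reduce to the semisimple case $M \cong \prod_S S^{a_S}$ (product over isomorphism classes of irreducibles). Fix $j$ with $P_R(M,j) > 0$. First, the projection $M \twoheadrightarrow S^{a_S}$ gives $P_R(M,j) \leq P_R(S^{a_S},j)$; by Morita equivalence, the $R$-module $S^{a_S}$ corresponds to an $a_S$-dimensional $\mathbb{F}_{q_S}$-vector space, so $R$-generation by $j$ elements forces $j \geq a_S$, yielding the uniform bound $a_S \leq j$. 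Second, the product projection $M \twoheadrightarrow \prod_{S \in \mathcal{S}^R_k} S$ (one copy of each; surjective because the factors are pairwise non-isomorphic simples) gives
\[
P_R(M,j) \leq \prod_{S \in \mathcal{S}^R_k}(1 - k^{-j}) = (1 - k^{-j})^{|\mathcal{S}^R_k|},
\]
forcing $|\mathcal{S}^R_k| = O(k^j)$. Combining with the formula from step one, $m^R_k(M) \leq \sum_{S \in \mathcal{S}^R_k} q_S^{a_S} \leq |\mathcal{S}^R_k| \cdot k^j$, which is polynomial in $k$ as required.
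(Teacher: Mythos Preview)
Your overall strategy is sound and matches the paper's in spirit (the paper is terse, citing \cite[Proposition 6.1, Proposition 3.5]{KV} for (i)$\Leftrightarrow$(ii) and giving a short direct argument for (ii)$\Leftrightarrow$(iii)). Two points need correction, however.

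\textbf{The Morita step is wrong as stated.} Number of generators is not a Morita invariant: the equivalence between $M_{r_S}(\mathbb{F}_{q_S})$-modules and $\mathbb{F}_{q_S}$-vector spaces sends $S^{a_S}$ to $\mathbb{F}_{q_S}^{a_S}$, but a single element of $S^{a_S}$ can generate a copy of $S^{r_S}$ (the regular module of $M_{r_S}$), not just $S$. The correct bound is $a_S \leq j\, r_S$, where $r_S = \dim_{\mathbb{F}_{q_S}} S$. Fortunately your final inequality survives, because $q_S^{a_S} \leq q_S^{j r_S} = |S|^j = k^j$; you should rewrite the step to reflect this.

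\textbf{Notational slip.} The projection $M \twoheadrightarrow \prod_{S \in \mathcal{S}^R_k} S$ does not exist unless every $S\in\mathcal{S}^R_k$ actually occurs in the head of $M$. You should restrict to the subset $\{S \in \mathcal{S}^R_k : a_S \geq 1\}$; your probabilistic inequality then bounds the size of \emph{that} set by $O(k^j)$, and since terms with $a_S=0$ contribute nothing to $m^R_k(M)$, the conclusion still follows.

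On (ii)$\Leftrightarrow$(iii): your exact formula $m^R_k(M)=\sum_S (|\Hom_R(M,S)|-1)/(q_S-1)$ is cleaner than the paper's route. The paper only records the crude inequality $m^R_k(M) \leq \sum_S(|\Hom_R(M,S)|-1)$ for one direction, and for the other direction first invokes (ii)$\Rightarrow$(i) to get $M$ $d$-generated, then bounds $|\Hom_R(M,S)|\le k^d$ and observes that the number of $S$ contributing a nonzero term is at most $m^R_k(M)$. Your sandwich avoids this detour through finite generation.
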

\begin{proof}
(i)$\Leftrightarrow$ (ii): Imitate \cite[Proposition 6.1, (2)$\Rightarrow$(3)]{KV} and \cite[Proposition 3.5]{KV}.

We will now show that (ii) is equivalent to (iii). First, note that for each maximal submodule in $M$ we get a quotient map to an irreducible module, so we have an injection from the set of maximal submodules of index $k$ to the set of surjective (or equivalently, non-trivial) maps to irreducible modules of order $k$. Hence, $$m^R_k(M) \leq \sum_{S \in \mathcal{S}^R_k} (\lvert \Hom_R(M,S) \rvert-1).$$

Conversely, if $M$ has PMSMG, we have shown it is $d$-generated for some $d$. Hence $\lvert \Hom_R(M,S) \rvert \leq \lvert S \rvert^d$, so $$\sum_{S \in \mathcal{S}^R_k} (\lvert \Hom_R(M,S) \rvert-1) \leq k^dm^R_k(M).$$
\end{proof}

Since $\mathrm{Hom}(M,S)$ is just $\Ext^0_R(M,S)$, we can now apply the proposition to give conditions equivalent to a module having type $\PFP_n$.

\begin{thm}
\label{extcondition}
Let $M$ be an $R$-module. Then, $M$ has type $\PFP_n$ if and only if $\sum_{S \in \mathcal{S}^R_k} (\lvert \Ext_R^m(M,S) \rvert-1)$ has polynomial growth in $k$ for all $m \leq n$.
\end{thm}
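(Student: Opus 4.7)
The plan is to proceed by induction on $n$. The base case $n=0$ is exactly Proposition~\ref{prop:PMSMG}, since $\PFP_0$ coincides with PFG (by Lemma~\ref{projcover}) and $\Ext^0_R = \Hom_R$.

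For the inductive step, assume the equivalence holds for $n-1$. First, using Proposition~\ref{prop:PMSMG}, the $m=0$ case of the growth condition is equivalent to $M$ being PFG; I would assume this on both sides of the equivalence. Now let $P \twoheadrightarrow M$ be a PFG projective cover (which exists by Lemma~\ref{projcover}), and let $K$ be its kernel. By Proposition~\ref{Schanuel}, $M$ has type $\PFP_n$ if and only if $K$ has type $\PFP_{n-1}$, so by the inductive hypothesis it suffices to show that the polynomial growth of $\sum_{S \in \mathcal{S}^R_k}(\lvert \Ext^m_R(M,S)\rvert - 1)$ for all $m \leq n$ is equivalent to the polynomial growth of $\sum_{S \in \mathcal{S}^R_k}(\lvert \Ext^m_R(K,S)\rvert - 1)$ for all $m \leq n-1$.

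The main tool is the long exact $\Ext$-sequence arising from $0 \to K \to P \to M \to 0$. Since $P$ is projective, $\Ext^m_R(P,S) = 0$ for $m \geq 1$, giving isomorphisms $\Ext^m_R(M,S) \cong \Ext^{m-1}_R(K,S)$ for all $m \geq 2$, and a four-term exact sequence
\[
0 \to \Hom_R(M,S) \to \Hom_R(P,S) \to \Hom_R(K,S) \to \Ext^1_R(M,S) \to 0.
\]
For $m \geq 2$ the isomorphism makes the corresponding sums equal, so the only nontrivial comparison is between the growth of $\sum_S(\lvert\Ext^1_R(M,S)\rvert-1)$ and $\sum_S(\lvert\Hom_R(K,S)\rvert-1)$.

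The main obstacle is precisely this $m=1$ comparison, which I would handle via the two estimates extracted from the four-term sequence: (a) $\Ext^1_R(M,S)$ is a quotient of $\Hom_R(K,S)$, so $\lvert\Ext^1_R(M,S)\rvert \leq \lvert\Hom_R(K,S)\rvert$, giving the forward direction for $m=1$; and (b) $\lvert\Hom_R(K,S)\rvert \leq \lvert\Hom_R(P,S)\rvert \cdot \lvert\Ext^1_R(M,S)\rvert$. For the reverse direction, $P$ is PFG so $\sum_S(\lvert\Hom_R(P,S)\rvert-1)$ is polynomial by Proposition~\ref{prop:PMSMG}, and the hypothesis on $\Ext^1$ implies in particular that for each individual $S\in\mathcal{S}^R_k$ the value $\lvert\Ext^1_R(M,S)\rvert$ is polynomially bounded in $k$. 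Using the identity $\lvert\Hom_R(K,S)\rvert - 1 \leq (\lvert\Hom_R(P,S)\rvert - 1)\cdot \lvert\Ext^1_R(M,S)\rvert + (\lvert\Ext^1_R(M,S)\rvert - 1)$ and summing over $S$ of order $k$ then yields the desired polynomial bound. Combining these with the case $m \geq 2$ and the already-handled $m = 0$ for $M$ completes the induction.
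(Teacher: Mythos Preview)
Your proposal is correct and follows the same overall strategy as the paper: induction on $n$ with base case Proposition~\ref{prop:PMSMG}, combined with dimension-shifting through the long exact $\Ext$-sequence associated to a PFG projective cover $0 \to K \to P \to M \to 0$. The paper's written proof is terser and only spells out the forward implication in the inductive step (bounding $f_n^M$ by $f_{n-1}^K + f_n^P$ directly for all $n \geq 1$, without separating $m=1$ from $m \geq 2$); your version is more complete in that it handles both directions explicitly, and your treatment of the reverse direction at $m=1$ via the inequality $\lvert\Hom_R(K,S)\rvert - 1 \leq (\lvert\Hom_R(P,S)\rvert - 1)\lvert\Ext^1_R(M,S)\rvert + (\lvert\Ext^1_R(M,S)\rvert - 1)$ supplies a detail the paper leaves to the reader.
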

\begin{proof}
For an $R$-module $M$, we write $$f_m^M(k)= \sum_{S \in \mathcal{S}^R_k} (\lvert \Ext_R^m(M,S) \rvert-1).$$

The case $n=0$ is Proposition \ref{prop:PMSMG}. Now, suppose that $n\ge 1$ and the theorem is true for every $m\le n-1$. Let $M$ be an $R$-module of type $\PFP_n$. By hypothesis, $f_m^M(k)$ is polynomial in $k$ for $m \leq n-1$; it remains to check that $f_n^M(k)$ is polynomial in $k$. By Lemma \ref{projcover}, we have a short exact sequence $$0 \to K \to P \to M \to 0$$ with $P$ PFG projective. $K$ has type $\PFP_{n-1}$, so by hypothesis $f_{n-1}^K(k)$ has polynomial growth in $k$; since $\Ext_R^n(P,-)=0$ for $n \geq 1$, we have that $f_n^P(k)=0$. Using the long exact sequence in cohomology, we see that $f_n^M(k) \leq f_{n-1}^K(k) + f_n^P(k)$, and we are done.
\end{proof}

Note that $\mathcal{S}^R_k$ may be infinite, and the sum $\sum_{S \in \mathcal{S}^R_k} (\lvert \Ext_R^n(M,S) \rvert-1)$ may nonetheless be finite. We will see in Section \ref{examples} that, for an infinite product $G=\prod H$ with $H$ a finite group,, there are infinitely many values of $k$ such that $$\sum_{S \in \mathcal{S}^{\hat{\mathbb{Z}}\llbracket G \rrbracket}_k} (\lvert H_{\hat{\mathbb{Z}}}^1(G,S) \rvert-1) = 0$$ even though $\mathcal{S}^{\hat{\mathbb{Z}}\llbracket G \rrbracket}_k$ is infinite.

\section{Groups of type \texorpdfstring{$\PFP_n$}{PFPn}}
\label{pfpn_groups}

In this section $R$ will be a commutative profinite ring.

\begin{defn}
A profinite group $G$ has \emph{type $\PFP_n$ over $R$} if $R$ has type $\PFP_n$ as $R \llbracket G \rrbracket$-module. 
Unless specified otherwise, type $\PFP_n$ will mean over $\hat{\mathbb{Z}}$.
\end{defn}
%\begin{rem}
%\end{rem}

\begin{rem}[$\PFP_n$ for prosoluble groups]\label{ex:PFPnprosoluble}
We get immediately that $\PFP_n$ and $\FP_n$ are equivalent when $R \llbracket G \rrbracket$ itself is PFG as an $R \llbracket G \rrbracket$-module. By Corollary \ref{PFGtoUBERG}, the ring $\hat{\mathbb{Z}} \llbracket G \rrbracket$ is PFG as a $\hat{\mathbb{Z}} \llbracket G \rrbracket$-module whenever $G$ is PFG; by Remark \ref{rem:prop6.1notfingen}, this occurs for all finitely generated prosoluble groups $G$. So $\PFP_n$ and $\FP_n$ over $\hat{\mathbb{Z}}$ coincide for such groups (cf.\ Conjecture \ref{quest:prosoluble} below).
\end{rem}

Every group is of type $\FP_0$ over every ring. This is false for type $\PFP_0$.
 
\begin{lem}
A profinite group $G$ has type $\PFP_0$ over $R$ if and only if $R$ is PFG as an $R$-module.
\end{lem}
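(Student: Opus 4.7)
The plan is to unwind the definitions: by the remark following the definition of modules of type $\PFP_n$, a module has type $\PFP_0$ if and only if it is PFG, so $G$ has type $\PFP_0$ over $R$ exactly when $R$ is PFG as an $R\llbracket G \rrbracket$-module. The task therefore reduces to showing that $R$ is PFG as an $R\llbracket G \rrbracket$-module if and only if it is PFG as an $R$-module.

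For this, I would observe that the $R\llbracket G \rrbracket$-module structure on $R$ factors through the augmentation $\varepsilon:R\llbracket G \rrbracket \to R$, so that $G$ acts trivially on $R$. Consequently an $R\llbracket G \rrbracket$-submodule of $R$ is nothing but an $R$-submodule of $R$: for any tuple $(r_1,\dots,r_k)\in R^k$ we have
\[
\langle r_1,\dots,r_k\rangle_{R\llbracket G \rrbracket}=\langle r_1,\dots,r_k\rangle_R.
\]
Thus the subsets $X_{R\llbracket G \rrbracket}(R,k)$ and $X_R(R,k)$ of $R^k$ coincide. Since the Haar measure on $R^k$ depends only on the underlying additive profinite abelian group, we get $P_{R\llbracket G \rrbracket}(R,k)=P_R(R,k)$ for every $k$, from which the equivalence follows.

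This is essentially a definition-chasing argument; the only subtlety to record is that one uses the triviality of the $G$-action on $R$ (forced by the augmentation), so there is no real obstacle to surmount.
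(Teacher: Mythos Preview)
Your proof is correct and follows essentially the same approach as the paper: both arguments reduce to the observation that, since $G$ acts trivially on $R$, a subset generates $R$ as an $R$-module if and only if it generates $R$ as an $R\llbracket G\rrbracket$-module. The paper compresses this into a single sentence, while you spell out the consequence for the sets $X_{R\llbracket G\rrbracket}(R,k)=X_R(R,k)$ and the resulting equality of probabilities.
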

\begin{proof}
Any subset of $R$ generates it as an $R$-module if and only if it generates it as an $R \llbracket G \rrbracket$-module, since the $G$-action is trivial.
\end{proof}

Next we show that the class of groups of type $\PFP_n$ is closed under commensurability. Recall that for an $R \llbracket G \rrbracket$-module $M$, we denote by $\Res^G_H M$ the $ R \llbracket H \rrbracket$-module with the same underlying set as $M$ and restricting the action of $G$ to $H$.

\begin{prop}\label{prop:PFPncommensurability}
Let $G$ be a profinite group and let $H$ be an open subgroup of $G$. Then, $H$ has type $\PFP_n$ over $R$ if and only if $G$ does.
\end{prop}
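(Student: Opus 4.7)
Since $H$ is open in $G$, the ring $R\llbracket G\rrbracket$ is free of rank $[G:H]$ as a left (and right) $R\llbracket H\rrbracket$-module; consequently, restriction $\Res^G_H$ and induction $\Ind^G_H=R\llbracket G\rrbracket\otimes_{R\llbracket H\rrbracket}-$ are both exact and preserve projectivity. The plan is to transfer a $\PFP_n$-resolution of $R$ in both directions and verify that the PFG condition is maintained.

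For the forward direction, I would take a $\PFP_n$-resolution $P_\ast\to R$ over $R\llbracket G\rrbracket$, so that $\Res^G_H P_\ast\to R$ is a projective resolution over $R\llbracket H\rrbracket$, and then verify that each $\Res^G_H P_i$ (for $i\leq n$) is PFG. Since $P_i$ is a direct summand of some $R\llbracket G\rrbracket^{d_i}$ (being a finitely generated projective), $\Res^G_H P_i$ is a direct summand, hence a quotient, of $R\llbracket H\rrbracket^{d_i[G:H]}$. For the converse, I would induce a $\PFP_n$-resolution $Q_\ast\to R$ over $R\llbracket H\rrbracket$ to obtain a projective resolution $\Ind^G_H Q_\ast\to \Ind^G_H R=R[G/H]$ over $R\llbracket G\rrbracket$, concluding that $R[G/H]$ has type $\PFP_n$. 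To upgrade this to type $\PFP_n$ for $R$ itself, I would exploit the short exact sequence $0\to I\to R[G/H]\to R\to 0$, where the augmentation kernel $I$ is an $R\llbracket G\rrbracket$-module whose action factors through the finite quotient $R\llbracket G/H\rrbracket$ and should therefore be of type $\PFP_\infty$ over $R\llbracket G\rrbracket$; Proposition \ref{moduletypes}(i) then gives $R$ type $\PFP_n$.

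The technical heart in both directions is that a finite direct sum $R\llbracket H\rrbracket^N$ (resp.\ $R\llbracket G\rrbracket^N$) be PFG as a module over itself, so that $\Res^G_H P_i$ (resp.\ $\Ind^G_H Q_i$) inherits PFG as a quotient by Lemma \ref{lem:PFGquotext}. By closure under extensions, this reduces to $R\llbracket H\rrbracket$ (resp.\ $R\llbracket G\rrbracket$) being PFG, i.e.\ to $H$ (resp.\ $G$) having UBERG. This is automatic in the prosoluble case by Remark \ref{ex:PFPnprosoluble}; in general, one may instead need to invoke the Ext characterisation of Theorem \ref{extcondition} together with Shapiro's lemma (identifying $|H^m(H,T)|$ with $|H^m(G,\Ind^G_H T)|$) and Clifford-theoretic bookkeeping of composition factors, so as to directly transfer the polynomial growth condition between the sums over $\mathcal{S}^{R\llbracket G\rrbracket}_\bullet$ and $\mathcal{S}^{R\llbracket H\rrbracket}_\bullet$ and thereby bypass any explicit appeal to UBERG.
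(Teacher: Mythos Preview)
Your proposal has a genuine gap. Both directions of your primary argument reduce to showing that a finite power of the group ring is PFG over itself, i.e.\ to UBERG for $H$ or $G$. But UBERG is not part of the hypothesis, and the paper makes clear (e.g.\ in Section~\ref{examples}) that type $\PFP_n$ is studied precisely in situations where the group ring need \emph{not} be PFG; indeed Proposition~\ref{prop:finsimplegroup} gives groups of type $\PFP_1$ over $R$ with $R\llbracket G\rrbracket$ not PFG. So the reduction to UBERG is not available, and your ``direct summand of a free module, hence PFG as a quotient'' step fails in general. You acknowledge this and gesture toward a Shapiro/Clifford workaround, but that sketch is not a proof: relating the sums over $\mathcal{S}^{R\llbracket G\rrbracket}_k$ and $\mathcal{S}^{R\llbracket H\rrbracket}_k$ via composition factors of induced modules would require nontrivial control over both the number of constituents and the sizes of the Ext groups simultaneously, and you have not supplied it.

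The paper avoids all of this with a single elementary observation you are missing: for $H$ open of index $c$ in $G$, an $R\llbracket G\rrbracket$-module $M$ is PFG if and only if $\Res^G_H M$ is PFG, proved directly at the level of Haar measure. One direction is trivial; for the other, if $t$ elements generate $M$ over $R\llbracket G\rrbracket$ then the $R\llbracket H\rrbracket$-submodule they generate has bounded index in $\Res^G_H M$, so finitely many further random elements suffice. Once PFG is known to pass through restriction, the standard argument from \cite[VIII, Proposition~5.1]{Brown} transfers a $\PFP_n$ resolution in both directions with no appeal to UBERG, no induction functor, and no short exact sequence for $R[G/H]$.
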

\begin{proof}
We first claim that an $R \llbracket G \rrbracket$-module $M$ is PFG if and only if $\Res^G_H M$ is PFG. Indeed, clearly any set of generators for $\Res^G_H M$ as an $R \llbracket H \rrbracket$-module generates $M$ as an $R \llbracket G \rrbracket$-module. Conversely, say $\lvert G:H \rvert = c$. A set of $t$ generators for $M$ generates an open submodule of $\Res^G_HM$ of index at most $c^t$, so if $P_{R \llbracket G \rrbracket}(M,t) > 0$ for some $t$, $P_{R \llbracket H \rrbracket}(\Res^G_HM,t+c^t) > 0$.

It now follows by the same techniques as for abstract modules (see \cite[VIII, Proposition 5.1]{Brown}) that $M$ has type $\PFP_n$ if and only if $\Res^G_HM$ does, and in particular this holds for $M=R$.
\end{proof}

\subsection{Minimal resolutions}

We can imitate the methods of \cite{Damian} to give some more detail about the type $\PFP_n$ conditions. Fix a projective resolution $P^G_\ast$ of $\hat{\mathbb{Z}}$ as a $\hat{\mathbb{Z}}\llbracket G \rrbracket$-module such that each $P^G_n$ is a projective cover of the kernel $K^G_{n-1}$ of the following map. Thus $G$ has type $\PFP_n$ over $\hat{\mathbb{Z}}$ if and only if $K^G_m$ is PFG for all $m<n$.

\begin{define}
Write $\mathcal{S}^{\hat{\mathbb{Z}}\llbracket G \rrbracket}$ for the set of irreducible $\hat{\mathbb{Z}}\llbracket G \rrbracket$-modules. For $M \in \mathcal{S}^{\hat{\mathbb{Z}}\llbracket G \rrbracket}$:
\begin{enumerate}[(i)]
\item $r_G(M)$ is the dimension of $M$ over the field $\End_G(M)$;
\item $\delta_G(M)$ is the number of non-Frattini chief factors $G$-isomorphic to $M$ in a chief series of $G$;
\item $h_G^n(M)$ is the dimension of $H^n(G,M)$ over $\End_G(M)$, for $n \geq 1$;
\item $h'_G(M)$ is the dimension of $H^1(G/C_G(M),M)$ over $\End_G(M)$.
\end{enumerate}
For $N$ any other (profinite) $\hat{\mathbb{Z}}\llbracket G \rrbracket$-module, we write $i_N(M)$ for the number of factors $G$-isomorphic to $M$ in the (semisimple) \emph{head} $N/\operatorname{rad}(N)$ of $N$. By \cite[Proposition~7.4.5]{Wilson}, $N/\operatorname{rad}(N)$ is isomorphic to a product of simple modules and we can write $N/\operatorname{rad}(N) = \prod_{i\in I} H_i$, where $H_i$ is a power of a simple $\hat{\mathbb{Z}}\llbracket G \rrbracket$-module $M_i$ and $M_i \not\cong M_j$ for $i\neq j$. The profinite $\hat{\mathbb{Z}}\llbracket G \rrbracket$-module $H_i$ is called a homogeneous component of the head of $N$.
\end{define}

Parallel to \cite[Theorem 1]{Damian}, we get:

\begin{thm}
For $N$ a $\hat{\mathbb{Z}}\llbracket G \rrbracket$-module, the minimal size $d_{\hat{\mathbb{Z}}\llbracket G \rrbracket}(N)$ of a generating set of $N$ is $$\sup_{M \in \mathcal{S}^{\hat{\mathbb{Z}}\llbracket G \rrbracket}} \left \lceil \frac{i_N(M)}{r_G(M)} \right\rceil.$$
\end{thm}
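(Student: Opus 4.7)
The plan is to follow the structure of \cite[Theorem 1]{Damian}, extending the argument from finitely generated profinite modules to arbitrary profinite $\hat{\mathbb{Z}}\llbracket G \rrbracket$-modules. First, I would apply a profinite Nakayama-style argument: since $\operatorname{rad}(N)$ is the intersection of the maximal open submodules of $N$, it is superfluous, so a closed subset topologically generates $N$ if and only if its image generates the semisimple head $\bar N = N/\operatorname{rad}(N)$. This reduces the problem to computing $d_{\hat{\mathbb{Z}}\llbracket G \rrbracket}(\bar N)$.

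By \cite[Proposition 7.4.5]{Wilson}, we can write $\bar N = \prod_{i \in I} H_i$, where each homogeneous component $H_i$ is isomorphic to a power of a simple module $M_i$, with the $M_i$ pairwise non-isomorphic and $k_i := i_N(M_i)$ equal to the number of $M_i$-factors in $H_i$. Since $\Hom_{\hat{\mathbb{Z}}\llbracket G \rrbracket}(H_i, H_j) = 0$ for $i \neq j$, I would next show that $d_{\hat{\mathbb{Z}}\llbracket G \rrbracket}\bigl(\prod_i H_i\bigr) = \sup_i d_{\hat{\mathbb{Z}}\llbracket G \rrbracket}(H_i)$: the $\geq$ direction is immediate by projection to each factor, and the reverse uses a Chinese-remainder-style lifting argument, patching together generators of each $H_i$ into elements of the product.

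It then remains to compute $d_{\hat{\mathbb{Z}}\llbracket G \rrbracket}(M^k)$ for a single simple module $M$. With $D = \End_G(M)$ and $r = r_G(M) = \dim_D M$, the Jacobson density theorem gives that the image of $\hat{\mathbb{Z}}\llbracket G \rrbracket$ in $\End_{D^{\mathrm{op}}}(M)$ is dense. A short computation, using that $M^k \cong M \otimes_D D^k$ as a left $\hat{\mathbb{Z}}\llbracket G \rrbracket$-module, then shows that any $t$-tuple of elements of $M^k$ generates a submodule isomorphic to $M^j$ with $j \leq \min(k, tr)$, and this bound is attained for a suitable tuple. Hence $d_{\hat{\mathbb{Z}}\llbracket G \rrbracket}(M^k) = \lceil k/r \rceil$, and combining the three steps yields the displayed formula.

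The main obstacle I anticipate is making the second step rigorous when the index set $I$ is infinite: one must ensure that $d$-tuples generating each finite subproduct can be assembled into a $d$-tuple topologically generating the inverse limit $\prod_i H_i$, rather than merely approximating it. This should follow from the compactness of $\bar N^d$ together with an inverse-limit argument over the cofinite open submodules of $\bar N$, in the spirit of the limiting constructions used elsewhere in the paper (for example in the proof of Lemma \ref{lem:pfng}).
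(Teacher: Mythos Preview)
Your proposal is correct and follows essentially the same three-step structure as the paper's proof: reduce to the head, reduce to homogeneous components, then compute $d(M^k)=\lceil k/r\rceil$. The only differences are cosmetic: the paper outsources step two to \cite[Section 3]{Damian} and step three to \cite[Lemma 1]{CGK} rather than invoking Jacobson density directly, and it does not explicitly flag the infinite-$I$ issue you raise (which is indeed handled by the inverse-limit/compactness argument you describe).
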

\begin{proof}
Any subset of $N$ generates $N$ if and only if it generates the head of $N$, if and only its projection into each homogeneous component of the head generates that homogeneous component (see \cite[Section 3]{Damian}). Each homogeneous component is a product of $i_N(M)$ copies of some $M \in \mathcal{S}^{\hat{\mathbb{Z}}\llbracket G \rrbracket}$, and so the homogeneous component is generated by $\left \lceil i_N(M)/r_G(M) \right\rceil$ elements by \cite[Lemma 1]{CGK}.
\end{proof}

We can apply this theorem to the kernels $K^G_n$ in our minimal projective resolution.

\begin{cor}
$$d_{\hat{\mathbb{Z}}\llbracket G \rrbracket}(K^G_{n-1}) = \sup_{M \in \mathcal{S}^{\hat{\mathbb{Z}}\llbracket G \rrbracket}} \left\lceil \frac{h_G^n(M)}{r_G(M)} \right\rceil.$$
\end{cor}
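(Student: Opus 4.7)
The plan is to apply the preceding theorem to $N = K^G_{n-1}$ and then identify $i_{K^G_{n-1}}(M)$ with $h^n_G(M)$ for every simple $\hat{\mathbb{Z}}\llbracket G\rrbracket$-module $M$. The identification splits into two independent observations about the minimal resolution $P^G_\ast$.

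First, I would exploit the minimality of the resolution at the level of heads. The surjection $\pi\colon P^G_n \twoheadrightarrow K^G_{n-1}$ is a projective cover, so its kernel $K^G_n$ is a superfluous submodule of $P^G_n$ and hence contained in $\operatorname{rad}(P^G_n)$. It follows that $\pi$ induces an isomorphism $P^G_n/\operatorname{rad}(P^G_n) \cong K^G_{n-1}/\operatorname{rad}(K^G_{n-1})$ of semisimple heads, so $i_{K^G_{n-1}}(M) = i_{P^G_n}(M)$ for every simple $M$.

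Next, I would compute $H^n(G,M) = \Ext^n_{\hat{\mathbb{Z}}\llbracket G \rrbracket}(\hat{\mathbb{Z}}, M)$ directly from $P^G_\ast$. The boundary $d_{n+1}\colon P^G_{n+1} \to P^G_n$ factors through $K^G_n \subseteq \operatorname{rad}(P^G_n)$; on the other hand, any homomorphism from $P^G_n$ into a simple module $M$ annihilates $\operatorname{rad}(P^G_n)$. Hence all differentials of the cochain complex $\Hom_{\hat{\mathbb{Z}}\llbracket G\rrbracket}(P^G_\ast, M)$ vanish, and $H^n(G,M) = \Hom_{\hat{\mathbb{Z}}\llbracket G\rrbracket}(P^G_n, M)$. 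Because any map from $P^G_n$ to $M$ factors through its head, a Schur-lemma count on $P^G_n/\operatorname{rad}(P^G_n) \cong \bigoplus_N N^{i_{P^G_n}(N)}$ gives $\Hom_{\hat{\mathbb{Z}}\llbracket G\rrbracket}(P^G_n, M) \cong \End_G(M)^{i_{P^G_n}(M)}$, so $h^n_G(M) = i_{P^G_n}(M)$. Combining with the previous step, $h^n_G(M) = i_{K^G_{n-1}}(M)$, and substituting into the formula of the preceding theorem yields the corollary.

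The only genuine subtlety is the folklore statement that in the profinite (semiperfect) setting a superfluous submodule of a projective module is automatically contained in the radical, which is what allows the minimality argument to force the differentials in $\Hom(P^G_\ast, M)$ to be zero; otherwise the proof is a direct assembly of the theorem above with a standard calculation using the minimal resolution.
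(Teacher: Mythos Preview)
Your proof is correct and follows essentially the same route as the paper: apply the preceding theorem with $N=K^G_{n-1}$ and identify $i_{K^G_{n-1}}(M)=h^n_G(M)$ via the vanishing of differentials in $\Hom_G(P^G_\ast,M)$ for $M$ simple, which the paper records by citing \cite[Lemma 2.11]{Gruenberg}. One small remark: the ``subtlety'' you flag is not special to projective modules or the profinite setting---a superfluous submodule $N\leq P$ lies in every maximal submodule (else $N+L=P$ forces $L=P$), hence in $\operatorname{rad}(P)$, so no extra hypothesis is needed there.
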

\begin{proof}
$i_{K^G_{n-1}}(M) = h_G^n(M)$ by the argument of \cite[Lemma 2.11]{Gruenberg}: differentials in the chain complex $\Hom_G(P^G_\ast,M)$ are trivial for $M$ irreducible.
\end{proof}

\begin{cor}
$$d_{\hat{\mathbb{Z}}\llbracket G \rrbracket}(K^G_0) = \sup_{M \in \mathcal{S}^{\hat{\mathbb{Z}}\llbracket G \rrbracket}} \left \lceil \frac{\delta_G(M)+h'_G(M)}{r_G(M)} \right \rceil.$$
\end{cor}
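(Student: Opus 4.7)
The plan is to apply the previous corollary with $n = 1$, which immediately gives
$$d_{\hat{\mathbb{Z}}\llbracket G \rrbracket}(K^G_0) = \sup_{M \in \mathcal{S}^{\hat{\mathbb{Z}}\llbracket G \rrbracket}} \left\lceil \frac{h^1_G(M)}{r_G(M)} \right\rceil.$$
It therefore suffices to establish, for every irreducible $\hat{\mathbb{Z}}\llbracket G \rrbracket$-module $M$, the cohomological identity
\begin{equation*}
h^1_G(M) = \delta_G(M) + h'_G(M), \tag{$\ast$}
\end{equation*}
after which the claim follows by substituting into the ceiling and taking the supremum.

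The strategy for $(\ast)$ is to analyse the inflation--restriction five-term exact sequence associated with the normal subgroup $C = C_G(M)$:
$$0 \to H^1(G/C, M) \to H^1(G, M) \xrightarrow{\mathrm{res}} H^1(C, M)^{G/C} \to H^2(G/C, M).$$
The inflation term has $\End_G(M)$-dimension $h'_G(M)$ by definition (noting that $\End_G(M) = \End_{G/C}(M)$), so the task reduces to showing that the image of $\mathrm{res}$ has $\End_G(M)$-dimension exactly $\delta_G(M)$. Since $C$ acts trivially on $M$, the term $H^1(C, M)^{G/C}$ identifies canonically with $\Hom_G(C^{\mathrm{ab}}, M)$, and restricting a continuous derivation $G \to M$ to $C$ produces a $G$-equivariant homomorphism.

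The main obstacle is the identification of this image with the contribution of the complemented chief factors of $G$ that are $G$-isomorphic to $M$. The key point is that each complemented chief factor $N/K \cong_G M$ (necessarily lying in $C$) admits a complement in $G/K$, which yields a crossed homomorphism $G \to N/K \cong M$ whose class spans a one-dimensional $\End_G(M)$-subspace of the image; conversely, refining a chief series of $C$ to a chief series of $G$ and inducting on its length via the long exact sequence in cohomology decomposes every class in the image into such contributions, with non-complemented (Frattini) chief factors contributing nothing. This is essentially the classical Gasch\"utz/Aschbacher--Guralnick correspondence between complemented chief factors and $1$-cohomology; in the profinite setting it is obtained by passing to an inverse limit over the finite quotients of $G$, where the statement is standard. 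Summing $\End_G(M)$-dimensions across the five-term sequence then yields $(\ast)$ and hence the corollary.
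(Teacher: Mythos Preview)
Your proposal is correct and follows the same line as the paper: apply the previous corollary with $n=1$ and then invoke the identity $h^1_G(M)=\delta_G(M)+h'_G(M)$. The paper simply cites \cite[(2.10)]{AG} for this identity, whereas you unpack that citation by sketching the inflation--restriction argument (with $C=C_G(M)$) that underlies the Aschbacher--Guralnick formula; the content is the same.
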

\begin{proof}
$h_G^1(M) = \delta_G(M)+h'_G(M)$, by \cite[(2.10)]{AG}.
\end{proof}

%Alternatively, for this corollary one can follow the same argument as \cite[Theorem 1]{Damian}: the only new non-triviality is that \cite{Damian} uses the fact that, for $G$ finite, the (abstract) augmentation ideal $I_{\mathbb{Z}}[G]$ is a Swan module; we can replace this with \cite[Theorem 6.3]{Linnell} which shows that direct summands of $I_{\mathbb{Z}}[G]$ are also Swan modules.

Note that it is enough for the two corollaries to take the supremum over $\mathbb{F}_p$-modules for $p$ dividing the order of $G$, since otherwise $h_G^n(M)=0$ by \cite[Corollary 7.3.3]{RZ}.

As expected, we deduce that $K^G_0$ is finitely generated if and only if $I_{\hat{\mathbb{Z}}}\llbracket G \rrbracket$ is, and comparing our theorem to \cite[Theorem 1]{Damian}, we see that $$d_{\hat{\mathbb{Z}}\llbracket G \rrbracket}(K^G_0) \leq d_{\hat{\mathbb{Z}}\llbracket G \rrbracket}(I_{\hat{\mathbb{Z}}}\llbracket G \rrbracket) \leq d_{\hat{\mathbb{Z}}\llbracket G \rrbracket}(K^G_0)+1.$$

\begin{thm}
For $N$ a $\hat{\mathbb{Z}}\llbracket G \rrbracket$-module, and any $k$, $$P_{\hat{\mathbb{Z}}\llbracket G \rrbracket}(N,k) = \prod_{M\in \mathcal{S}^{\hat{\mathbb{Z}}\llbracket G \rrbracket}} \prod_{i=0}^{i_N(M)-1} \left(1-\frac{\vert \End_G(M) \vert^i}{\vert M \vert^k}\right).$$ (If $i_N(M)=0$ for some $M$, we take the corresponding factor to be the empty product, i.e. $1$.)
\end{thm}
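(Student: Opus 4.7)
The plan is to reduce $P_{\hat{\mathbb{Z}}\llbracket G\rrbracket}(N,k)$ to a product over the simple modules appearing in the head of $N$, and then evaluate each factor by a short linear algebra count. Write $R=\hat{\mathbb{Z}}\llbracket G\rrbracket$ for brevity.

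First I would argue that $P_R(N,k)=P_R(N/\operatorname{rad}(N),k)$. Since $\operatorname{rad}(N)$ is the intersection of the maximal open submodules of $N$ it is superfluous, so a $k$-tuple topologically generates $N$ iff its image generates $N/\operatorname{rad}(N)$; meanwhile the quotient map $N^k\twoheadrightarrow (N/\operatorname{rad}(N))^k$ is measure-preserving on normalised Haar measure by the module analogue of Lemma~\ref{lem:11.1.1}. Next, using the decomposition $N/\operatorname{rad}(N)=\prod_{[M]}H_M$ with $H_M\cong M^{i_N(M)}$ recalled in the definition block preceding the theorem, together with the vanishing $\Hom_G(H_M,H_{M'})=0$ for non-isomorphic simples (so every finite continuous quotient of the product factors through a finite subproduct), a tuple topologically generates $\prod_{[M]}H_M$ iff each of its projections to $H_M$ generates $H_M$. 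Since Haar measure on a profinite product is the product measure, this yields
\[
P_R(N,k)=\prod_{[M]}P_R(M^{i_N(M)},k).
\]

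The key computation is then $P_R(M^s,k)$ for a fixed simple $M$, where $F=\End_G(M)$ (a finite field by Schur's lemma) and $s=i_N(M)$. Writing $x_j=(x_j^{(1)},\dots,x_j^{(s)})\in M^s$ and using the identification $\Hom_G(M^s,M)\cong F^s$, the $x_j$ generate $M^s$ iff for every nonzero $(c_1,\dots,c_s)\in F^s$ there is some $j$ with $\sum_i c_ix_j^{(i)}\neq 0$; equivalently, the $s$ vectors $y_i=(x_1^{(i)},\dots,x_k^{(i)})\in M^k$ are $F$-linearly independent. The $y_i$ are i.i.d.\ uniform in $M^k$, which is an $F$-vector space of cardinality $|M|^k$, so by the standard count in which the $i$th vector must fall outside the $F$-span of the previous $i$,
\[
P_R(M^s,k)=\prod_{i=0}^{s-1}\left(1-\frac{|F|^i}{|M|^k}\right),
\]
and combining with the previous display yields the claimed formula.

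I expect the main obstacle to be the dictionary in the single-factor step that translates $R$-module generation into $F$-linear independence in $M^k$; once this is set up, the remainder is bookkeeping. Subtleties specific to the profinite setting---infinite products over isomorphism classes $[M]$, and the distinction between closed and abstract submodules---can be dealt with by approximating $N$ by its finite quotients modulo open submodules, computing the formula there, and passing to the inverse limit, using that $X_R(-,k)$ is the inverse limit of its counterparts and $P_R(-,k)$ is continuous under such limits.
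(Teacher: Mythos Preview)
Your argument is correct and is essentially a fleshed-out version of the paper's proof, which simply refers the reader to \cite[Theorem~2]{Damian} for the core computation; your reduction to the head, factorisation over homogeneous components, and the $F$-linear independence count for $P_R(M^s,k)$ are exactly that argument. The paper adds two bookkeeping remarks you leave implicit: first, when $k<d_{\hat{\mathbb{Z}}\llbracket G\rrbracket}(N)$ the formula still holds because some factor $1-|\End_G(M)|^i/|M|^k$ vanishes (take $i=r_G(M)k<i_N(M)$), and second, the passage to the limit works even when $G$ is not countably based, which the paper handles by a transfinite induction on a well-ordered cofinal system of open normal subgroups rather than a sequential inverse limit. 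Your final paragraph covers the spirit of both points, so nothing further is needed.
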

\begin{proof}
The proof echoes that of \cite[Theorem 2]{Damian} exactly, except for the following two points.

Firstly, it is unnecessary to assume $k \geq d_{\hat{\mathbb{Z}}\llbracket G \rrbracket}(N)$: if not, by the argument of the previous theorem there is some $M \in \mathcal{S}^{\hat{\mathbb{Z}}\llbracket G \rrbracket}$ such that $$i_N(M)/r_G(M) > k,$$ so there is some $i < i_N(M)$ such that $$\vert M \vert^k = \vert \End_G(M) \vert^{r_G(M)k} = \vert \End_G(M) \vert^i,$$ and hence for that value of $i$, the factor $1-\vert \End_G(M) \vert^i/\vert M \vert^k$ vanishes, and we correctly conclude $P_{\hat{\mathbb{Z}}\llbracket G \rrbracket}(N,k) = 0$. If $N$ is not finitely generated, our formula gives $P_{\hat{\mathbb{Z}}\llbracket G \rrbracket}(N,k) = 0$ for all $k$.

Secondly, it is unnecessary to assume $G$ is countably based. If not, we may choose a (transfinite) sequence $\{N_\alpha\}$ of normal closed subgroups of $G=N_0$, such that $N_\alpha+1$ is open in $N_\alpha$ and $N_\alpha = \bigcap_{\beta<\alpha} N_\beta$ for $\alpha$ a limit. Then, arguing inductively on $\alpha$ in the same way as \cite[Theorem 2]{Damian}, we reach the required conclusion.
\end{proof}

These arguments apply to the statement of \cite[Theorem 2]{Damian}: the extra conditions there can be removed.

\begin{cor}
For any $k$, $$P_{\hat{\mathbb{Z}}\llbracket G \rrbracket}(K^G_{n-1},k) = \prod_{\mathcal{S}^{\hat{\mathbb{Z}}\llbracket G \rrbracket}} \prod_{i=0}^{h_G^n(M)-1} \left(1-\frac{\vert \End_G(M) \vert^i}{\vert M \vert^k}\right).$$
\end{cor}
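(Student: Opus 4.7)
The plan is to deduce this corollary by applying the preceding theorem, which expresses $P_{\hat{\mathbb{Z}}\llbracket G \rrbracket}(N,k)$ in terms of the multiplicities $i_N(M)$ of irreducible modules in the head $N/\operatorname{rad}(N)$, to the particular module $N = K^G_{n-1}$. The only ingredient I need is the identification
\[
i_{K^G_{n-1}}(M) = h_G^n(M) \quad \text{for every } M \in \mathcal{S}^{\hat{\mathbb{Z}}\llbracket G \rrbracket},
\]
which is exactly the computation already recorded in the proof of the immediately preceding corollary (giving $d_{\hat{\mathbb{Z}}\llbracket G \rrbracket}(K^G_{n-1})$), attributed there to the argument of \cite[Lemma~2.11]{Gruenberg}: since each $P^G_j$ is a projective cover of $K^G_{j-1}$, the differentials in the cochain complex $\Hom_G(P^G_\ast,M)$ vanish for any irreducible $M$, so $H^n(G,M)$ is canonically isomorphic to $\Hom_G(K^G_{n-1},M)$, and this in turn computes the multiplicity of $M$ in the head of $K^G_{n-1}$.

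With this identification in hand, I would substitute $N = K^G_{n-1}$ into the formula of the previous theorem and rewrite $i_{K^G_{n-1}}(M)$ as $h_G^n(M)$ under the inner product sign, so that the product $\prod_{i=0}^{i_N(M)-1}$ becomes $\prod_{i=0}^{h_G^n(M)-1}$. The convention about the empty product carries over without change: when $h_G^n(M) = 0$, the corresponding inner factor is simply $1$, so those $M$ contribute nothing.

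There is nothing genuinely hard here; the corollary is essentially a rewording of the previous theorem specialised to the syzygy modules of the minimal resolution. The only subtlety worth flagging, already handled by the proof of the previous theorem, is that neither countable basedness of $G$ nor a lower bound $k \geq d_{\hat{\mathbb{Z}}\llbracket G \rrbracket}(K^G_{n-1})$ is needed: if $K^G_{n-1}$ is not PFG (equivalently, if $h_G^n(M)/r_G(M)$ is unbounded over $M$), the product side vanishes identically in $k$, matching $P_{\hat{\mathbb{Z}}\llbracket G \rrbracket}(K^G_{n-1},k) = 0$.
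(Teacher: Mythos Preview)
Your proposal is correct and matches the paper's own proof essentially verbatim: the paper simply notes that $i_{K^G_{n-1}}(M) = h_G^n(M)$ (as already established in the preceding corollary) and applies the previous theorem with $N = K^G_{n-1}$. Your additional remarks about the empty-product convention and the non-PFG case are accurate elaborations of points already handled in the theorem being cited.
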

\begin{proof}
As before, note that $i_{K^G_{n-1}}(M) = h_G^n(M)$.
\end{proof}

Finally, by applying the same arguments as \cite[Theorem 3]{Damian}, we get:

\begin{thm}
For $N$ a finitely generated $\hat{\mathbb{Z}}\llbracket G \rrbracket$-module, $N$ is PFG if and only if the number of irreducible $G$-modules $M$ such that $i_N(M) \geq 1$ is polynomially bounded as a function of the order.
\end{thm}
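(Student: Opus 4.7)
The plan is to apply Proposition~\ref{prop:PMSMG} and translate the condition on $\sum_{S \in \mathcal{S}^{\hat{\mathbb{Z}}\llbracket G \rrbracket}_k} (|\Hom_G(N,S)|-1)$ into a condition on the counting function
\[
  f_N(k) := \bigl|\{S \in \mathcal{S}^{\hat{\mathbb{Z}}\llbracket G \rrbracket}_k : i_N(S) \geq 1\}\bigr|.
\]
The bridge is the standard observation that for an irreducible module $S$ a $G$-homomorphism $N \to S$ is either zero or surjective, so $\Hom_G(N,S) \neq 0$ if and only if $S$ is a quotient of $N$, if and only if $S$ occurs as a simple factor of the head $N/\operatorname{rad}(N)$, i.e.\ $i_N(S) \geq 1$. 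In particular, the non-zero summands in $\sum_{S \in \mathcal{S}^{\hat{\mathbb{Z}}\llbracket G \rrbracket}_k}(|\Hom_G(N,S)|-1)$ are precisely those indexed by $S$ with $i_N(S) \geq 1$, and each such summand is $\geq 1$. This already yields the lower bound $f_N(k) \leq \sum_{S \in \mathcal{S}^{\hat{\mathbb{Z}}\llbracket G \rrbracket}_k}(|\Hom_G(N,S)|-1)$, so polynomial growth of the sum forces polynomial growth of $f_N(k)$.

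For the other direction, the finite generation hypothesis enters: if $N$ is generated by $d$ elements, any $\hat{\mathbb{Z}}\llbracket G \rrbracket$-homomorphism $N \to S$ is determined by the images of these generators, giving $|\Hom_G(N,S)| \leq |S|^d = k^d$. Therefore
\[
  \sum_{S \in \mathcal{S}^{\hat{\mathbb{Z}}\llbracket G \rrbracket}_k}\bigl(|\Hom_G(N,S)|-1\bigr) \;\leq\; f_N(k)\cdot k^d,
\]
so polynomial growth of $f_N(k)$ implies polynomial growth of the sum. Combined with Proposition~\ref{prop:PMSMG} (applied with $R = \hat{\mathbb{Z}}\llbracket G \rrbracket$), which tells us that $N$ is PFG if and only if $\sum_{S \in \mathcal{S}^{\hat{\mathbb{Z}}\llbracket G \rrbracket}_k}(|\Hom_G(N,S)|-1)$ is polynomially bounded in $k$, the desired equivalence is immediate.

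There is no substantive obstacle to overcome here: the proof is essentially a two-sided sandwich estimate. The only subtle point worth flagging is the use of the finite generation hypothesis, which is exactly what allows one to trade $|\Hom_G(N,S)|$ for $|S|^d$ in the upper bound; without it, a single irreducible $S$ could contribute arbitrarily many homomorphisms at a fixed order $k$, and the equivalence between ``few simple quotients'' and PFG would fail. This is analogous to the role played by finite generation in passing between conditions (ii) and (iii) of Proposition~\ref{prop:PMSMG}.
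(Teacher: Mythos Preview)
Your proof is correct, and it takes a somewhat different route from the paper's. The paper simply defers to the argument of \cite[Theorem 3]{Damian}, which proceeds via the explicit product formula
\[
P_{\hat{\mathbb{Z}}\llbracket G \rrbracket}(N,k) \;=\; \prod_{M\in \mathcal{S}^{\hat{\mathbb{Z}}\llbracket G \rrbracket}} \prod_{i=0}^{i_N(M)-1} \left(1-\frac{\lvert \End_G(M) \rvert^i}{\lvert M \rvert^k}\right)
\]
and analyses when this infinite product is positive through a Dirichlet-series convergence estimate; the paper even has to patch a small gap in Damian's inequality $\sum g(n)/n^t \leq A_G(t)$, which tacitly assumed $i_N(M)\geq 1$ for every irreducible $M$. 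Your argument bypasses all of this by going straight through Proposition~\ref{prop:PMSMG}: once PFG is characterised by polynomial growth of $\sum_S(\lvert\Hom_G(N,S)\rvert-1)$, the two-sided sandwich $f_N(k)\leq \sum_S(\lvert\Hom\rvert-1)\leq k^d f_N(k)$ finishes immediately. This is more elementary, entirely self-contained within the paper, and sidesteps the correction the authors had to make to Damian's proof; the trade-off is that it does not give any quantitative information about the actual generation probability, whereas the product-formula approach in principle does.
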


Note that the inequality $\sum g(n)/n^t \leq A_G(t)$ in the proof of \cite[Theorem 3]{Damian} (in the notation used there) is not quite true: it assumes that $i_G(M) \geq 1$ for all $M \in \mathcal{S}^{\hat{\mathbb{Z}}\llbracket G \rrbracket}$. Since this holds for all non-trivial $M$, and since the number of trivial irreducible $G$-modules of order $n$ is polynomially bounded as a function of $n$ (by the polynomial $1$), the conclusion holds.

\begin{cor}
\label{Damian3}
Assume $G$ has type $\FP_n$. Then $K^G_{n-1}$ is PFG (and $G$ has type $\PFP_n$ over $\hat{\mathbb{Z}}$) if and only if the number of irreducible $G$-modules $M$ such that $h_G^n(M) \geq 1$ is polynomially bounded as a function of the order. In particular, for $G$ of type $\FP_1$, $G$ has type $\PFP_1$ if and only if the number of irreducible $G$-modules $M$ such that $\delta_G(M)+h'_G(M) \geq 1$ is polynomially bounded as a function of the order.
\end{cor}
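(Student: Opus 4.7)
The plan is to deduce this as a direct specialisation of the preceding theorem to $N = K^G_{n-1}$, combined with the identification $i_{K^G_{n-1}}(M) = h_G^n(M)$ that was established in the earlier corollary of this section.

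First I would verify the hypothesis of the preceding theorem, namely that $K^G_{n-1}$ is finitely generated. Since $G$ has type $\FP_n$, there is a partial projective resolution of $\hat{\mathbb{Z}}$ of length $n$ with finitely generated projective terms, and a Schanuel argument (the $\FP_n$ variant of Proposition~\ref{Schanuel}) applied to this partial resolution and to the minimal resolution $P^G_\ast$ shows that the kernel $K^G_{n-1}$ in the minimal resolution is finitely generated. The preceding theorem then asserts that $K^G_{n-1}$ is PFG if and only if the number of irreducible $\hat{\mathbb{Z}}\llbracket G \rrbracket$-modules $M$ with $i_{K^G_{n-1}}(M) \geq 1$ is polynomially bounded in $|M|$. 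Substituting the identification $i_{K^G_{n-1}}(M) = h_G^n(M)$ (which holds because the differentials of $\Hom_G(P^G_\ast, M)$ vanish for irreducible $M$ when the resolution is minimal) gives the main equivalence.

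For the parenthetical identification of ``$K^G_{n-1}$ is PFG'' with ``$G$ has type $\PFP_n$'': by the projective cover lemma (Lemma~\ref{projcover}) together with Proposition~\ref{Schanuel}, $G$ has type $\PFP_n$ exactly when each of $K^G_0, \dots, K^G_{n-1}$ in the minimal resolution is PFG. I would handle this by induction on $n$, so that the lower-degree kernels are supplied by the inductive hypothesis and the step from $\PFP_{n-1}$ to $\PFP_n$ reduces to the PFG-ness of $K^G_{n-1}$. For the ``in particular'' statement at $n=1$, I would apply the main assertion and then substitute the identity $h_G^1(M) = \delta_G(M) + h'_G(M)$ from \cite[(2.10)]{AG}, which has already been invoked earlier in the section.

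Since the computational heavy lifting is carried out in the preceding theorem (and in the earlier corollary supplying $i_{K^G_{n-1}}(M) = h_G^n(M)$), there is no serious obstacle: the only mildly subtle point is the inductive justification that PFG-ness of the single kernel $K^G_{n-1}$ upgrades to the full $\PFP_n$ condition, which is precisely where the $\FP_n$ hypothesis (giving finite generation at lower degrees, to be combined with the inductive PFG-ness there) is used.
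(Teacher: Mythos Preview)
Your approach is essentially identical to the paper's: the corollary is stated there without proof, as an immediate consequence of the preceding theorem applied to $N = K^G_{n-1}$ together with the identification $i_{K^G_{n-1}}(M) = h_G^n(M)$ already recorded earlier in the section (and, for the $n=1$ statement, the formula $h_G^1(M) = \delta_G(M) + h'_G(M)$). If anything your write-up is more careful than the paper's, since you make explicit the Schanuel step giving finite generation of $K^G_{n-1}$ from the $\FP_n$ hypothesis; the only point to tidy is that your ``inductive'' justification of the parenthetical does not by itself supply PFG-ness of the lower kernels, so the parenthetical is best read as shorthand for applying the present equivalence at each degree $m \leq n$ in turn rather than as a claim that PFG-ness of $K^G_{n-1}$ alone forces type $\PFP_n$.
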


\subsection{APFG and \texorpdfstring{$\mathrm{PFP}_1$}{PFP1}}\label{sec:APFG}

As the reader might guess, APFG and $\PFP_1$ are closely related. From Proposition \ref{APFGring} we deduce:

\begin{lem}
\label{APFGPFP1}
If $G$ is APFG, then it has type $\PFP_1$ over $\hat{\mathbb{Z}}$.
\end{lem}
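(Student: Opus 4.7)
The plan is to build an explicit projective resolution witnessing type $\PFP_1$ for $\hat{\mathbb{Z}}$ as a $\hat{\mathbb{Z}}\llbracket G \rrbracket$-module, using the augmentation sequence and the hypothesis on the augmentation ideal.

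First I would observe that, by Proposition \ref{APFGring}, APFG implies UBERG, so $\hat{\mathbb{Z}}\llbracket G\rrbracket$ is PFG as a $\hat{\mathbb{Z}}\llbracket G\rrbracket$-module. Take $P_0 = \hat{\mathbb{Z}}\llbracket G\rrbracket$ together with the augmentation map $\varepsilon \colon P_0 \twoheadrightarrow \hat{\mathbb{Z}}$; this $P_0$ is PFG (by UBERG) and free, hence projective. The kernel of $\varepsilon$ is, by definition, the augmentation ideal $I_{\hat{\mathbb{Z}}}\llbracket G\rrbracket$, which is PFG by the APFG hypothesis.

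Next, by Lemma \ref{projcover}, the projective cover $P_1 \twoheadrightarrow I_{\hat{\mathbb{Z}}}\llbracket G\rrbracket$ of the PFG module $I_{\hat{\mathbb{Z}}}\llbracket G\rrbracket$ is itself PFG. Splicing this cover with the augmentation sequence yields an exact sequence
\[
P_1 \longrightarrow P_0 \stackrel{\varepsilon}{\longrightarrow} \hat{\mathbb{Z}} \longrightarrow 0
\]
with $P_0,P_1$ both PFG projective $\hat{\mathbb{Z}}\llbracket G\rrbracket$-modules. Extending $P_1$ to any (not necessarily PFG) projective resolution of $\hat{\mathbb{Z}}$ in the usual way produces a projective resolution $P_\ast$ of $\hat{\mathbb{Z}}$ whose first two terms are PFG, which is exactly the condition for $\hat{\mathbb{Z}}$ to have type $\PFP_1$ as a $\hat{\mathbb{Z}}\llbracket G\rrbracket$-module, i.e.\ for $G$ to have type $\PFP_1$ over $\hat{\mathbb{Z}}$.

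There is no real obstacle here: the proof is a two-line assembly of Proposition \ref{APFGring} (to get PFG-ness of $P_0$), the augmentation sequence (to identify the first syzygy), and Lemma \ref{projcover} (to produce a PFG projective cover $P_1$ of the first syzygy).
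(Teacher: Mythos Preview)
Your proof is correct and follows essentially the same approach as the paper: use Proposition~\ref{APFGring} to see that $\hat{\mathbb{Z}}\llbracket G\rrbracket$ is PFG, then read off type $\PFP_1$ from the augmentation sequence \eqref{eq:augmentation} together with Lemma~\ref{projcover}. The paper compresses this into two sentences, but your more explicit version is the same argument.
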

\begin{proof}
By Proposition \ref{APFGring}, $\hat{\mathbb{Z}} \llbracket G \rrbracket$ is PFG. So the exact sequence \eqref{eq:augmentation} of PFG modules shows that $G$ has type $\PFP_1$.
\end{proof}

\begin{cor}
\label{PFP1}
If $G$ is PFG, then it has type $\PFP_1$ over $\hat{\mathbb{Z}}$.
\end{cor}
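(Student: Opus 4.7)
The proof plan is essentially a two-step chain using results already established in the excerpt. The hypothesis is that $G$ is PFG. By Proposition \ref{prop:damian_thm4.4} (the cited result from \cite{Damian}), every PFG profinite group is APFG. Then Lemma \ref{APFGPFP1}, which was just proved immediately above, gives that APFG implies type $\PFP_1$ over $\hat{\mathbb{Z}}$.

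So the entire proof is a one-line composition: apply Proposition \ref{prop:damian_thm4.4} to pass from PFG to APFG, then apply Lemma \ref{APFGPFP1} to conclude type $\PFP_1$. There is no obstacle here; the corollary is genuinely a formal consequence of the two preceding results, and is recorded separately mainly because this chain of implications is one of the diagram arrows advertised in the introduction (namely PFG $\Rightarrow$ APFG $\Rightarrow \PFP_1$, with the composite being PFG $\Rightarrow \PFP_1$).

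If one preferred to avoid explicit reference to APFG, an equivalent direct route is: a PFG group has UBERG (Corollary \ref{PFGtoUBERG}), so $\hat{\mathbb{Z}}\llbracket G \rrbracket$ is PFG as a module over itself; then the augmentation sequence \eqref{eq:augmentation} exhibits $\hat{\mathbb{Z}}$ as a quotient of the PFG module $\hat{\mathbb{Z}}\llbracket G \rrbracket$ whose kernel $I_{\hat{\mathbb{Z}}}\llbracket G \rrbracket$ is also PFG (being finitely generated over a PFG ring, using $\FP_1$ which follows from PFG). This shows a length-1 PFG-projective resolution of $\hat{\mathbb{Z}}$ exists, i.e., type $\PFP_1$. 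But the shortest write-up is just to cite Proposition \ref{prop:damian_thm4.4} and Lemma \ref{APFGPFP1}.
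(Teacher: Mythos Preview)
Your proposal is correct and matches the paper's proof exactly: apply Proposition~\ref{prop:damian_thm4.4} to get APFG from PFG, then invoke Lemma~\ref{APFGPFP1} to conclude type $\PFP_1$. The alternative route you sketch via UBERG is also valid but, as you note, unnecessary here.
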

\begin{proof}
By Proposition \ref{prop:damian_thm4.4}, $G$ being PFG implies it is APFG. The result follows by the previous lemma.
\end{proof}

Using the relation between APFG and $\PFP_1$, we can give an example of group of type $\PFP_1$ that is not PFG.

\begin{example}\label{ex:damian}
In \cite[Example 4.5]{Damian} it is shown that, for $N$ large enough, the group $\prod_{n\ge N} \mathrm{Alt}(n)^{2^n}$ is APFG but not PFG. This example therefore has type $\PFP_1$ over $\hat{\mathbb{Z}}$, and has UBERG, but is not PFG.
\end{example}

\subsection{Minimal extensions, PFR and type \texorpdfstring{$\PFP_2$}{PFP2}}
\label{PFPvsPFP2}

%Recall that in Proposition \ref{eq/iso} we showed that polynomial minimal extension growth is equivalent to the number of isomorphism classes of extensions growing polynomially. Among finitely generated groups, this gives another equivalent characterisation of being PFR. The same is true if we restrict to equivalence/isomorphism classes of minimal abelian extensions.

\begin{lem}
\label{pfp2}
PFR implies $\PFP_2$ over $\hat{\mathbb{Z}}$.
\end{lem}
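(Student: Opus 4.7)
The plan is to exhibit the first three terms of a projective resolution of $\hat{\mathbb{Z}}$ over $\hat{\mathbb{Z}}\llbracket G \rrbracket$ that are all PFG.

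Since $G$ is PFR, it is finitely generated, say by $d$ elements. Let $F = \hat{F}_d$ be the free profinite group of rank $d$ and choose a surjection $\pi \colon F \twoheadrightarrow G$ with kernel $R$. The definition of PFR, applied to $\pi$, says that $R$ is positively finitely normally generated in $F$.

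The first substantive step is to check that the relation module $R^{ab}$, viewed as a $\hat{\mathbb{Z}}\llbracket G \rrbracket$-module via the conjugation action of $F$ (which factors through $G = F/R$), is PFG. Indeed, if $r_1, \ldots, r_k$ normally generate $R$ in $F$ with positive probability, then their images in $R^{ab}$ generate $R^{ab}$ as a $\hat{\mathbb{Z}}\llbracket G \rrbracket$-module with at least the same probability, by Lemma \ref{lem:11.1.1} applied componentwise to the continuous surjection $R^k \twoheadrightarrow (R^{ab})^k$.

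Next I would invoke the standard four-term exact sequence
$$0 \to R^{ab} \to \hat{\mathbb{Z}}\llbracket G \rrbracket^d \to \hat{\mathbb{Z}}\llbracket G \rrbracket \to \hat{\mathbb{Z}} \to 0$$
associated to the free presentation $F \twoheadrightarrow G$. This arises from the augmentation sequence for $F$ (using that $I_{\hat{\mathbb{Z}}}\llbracket F\rrbracket$ is free of rank $d$ over $\hat{\mathbb{Z}}\llbracket F \rrbracket$) by tensoring with $\hat{\mathbb{Z}}\llbracket G\rrbracket$ over $\hat{\mathbb{Z}}\llbracket F\rrbracket$, together with the identification $\operatorname{Tor}_1^{\hat{\mathbb{Z}}\llbracket F\rrbracket}(\hat{\mathbb{Z}}\llbracket G\rrbracket, \hat{\mathbb{Z}}) \cong R^{ab}$. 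Finally, take a PFG projective cover $P_2 \twoheadrightarrow R^{ab}$, which exists by Lemma \ref{projcover} since $R^{ab}$ is PFG. Splicing onto the sequence above, with $P_0 = \hat{\mathbb{Z}}\llbracket G\rrbracket$ and $P_1 = \hat{\mathbb{Z}}\llbracket G\rrbracket^d$ (both trivially PFG), produces a projective resolution of $\hat{\mathbb{Z}}$ whose first three terms are PFG, so $G$ has type $\PFP_2$.

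The main obstacle is setting up the four-term exact sequence correctly in the profinite setting; this is standard once one notes that $\hat{\mathbb{Z}}\llbracket F\rrbracket$ is free, hence projective, over $\hat{\mathbb{Z}}\llbracket R\rrbracket$, so that a $\hat{\mathbb{Z}}\llbracket F\rrbracket$-free resolution of $\hat{\mathbb{Z}}$ also computes $H_\ast(R, \hat{\mathbb{Z}})$ after change of rings. The rest of the argument is a direct assembly of Lemmas \ref{lem:11.1.1} and \ref{projcover}.
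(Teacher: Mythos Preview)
Your argument has a real gap hidden in the parenthetical ``both trivially PFG''. A free module $\hat{\mathbb{Z}}\llbracket G\rrbracket^d$ is \emph{not} trivially PFG: being cyclic (or finitely generated) does not imply PFG. Indeed, $\hat{\mathbb{Z}}\llbracket G\rrbracket$ is PFG as a module over itself precisely when $G$ has UBERG (Proposition~\ref{KV}); for instance $\hat{\mathbb{Z}}\llbracket F_2\rrbracket$ is cyclic but not PFG. So to make your resolution work you must first invoke PFR $\Rightarrow$ UBERG. That implication is available in the paper (Proposition~\ref{KV}), so the gap can be filled, but it is exactly the substantive step you have elided.

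Once UBERG is in hand, however, your route becomes redundant: with $\hat{\mathbb{Z}}\llbracket G\rrbracket$ PFG, finitely generated and PFG coincide for all $\hat{\mathbb{Z}}\llbracket G\rrbracket$-modules, so type $\PFP_2$ collapses to type $\FP_2$, and the four-term sequence simply witnesses that finitely presented groups have type $\FP_2$. This is precisely the paper's two-line proof (PFR $\Rightarrow$ UBERG and finitely presented $\Rightarrow$ $\PFP_2=\FP_2$ and $G$ has type $\FP_2$). The one genuinely new ingredient in your approach --- deducing directly from the PFR hypothesis that the relation module $R^{ab}$ is PFG, via Lemma~\ref{lem:11.1.1} --- is correct and pleasant, but it does no work once UBERG is known, since $R^{ab}$ finitely generated already suffices. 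The paper does sketch a second proof after Proposition~\ref{eq/iso}, closer in spirit to yours, going through minimal abelian extension growth and Theorem~\ref{extcondition}; that argument too passes through UBERG (via APFG $\Rightarrow$ $\PFP_1$).
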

\begin{proof}
%A positively finitely presented group $G$ is PFR and PFG, so has type $\PFP_1$ by Corollary \ref{PFP1}, and by the previous proposition, the number of equivalence classes of minimal extensions of $G$ of degree $n$ grows polynomially in $n$. In particular the number of equivalence classes of minimal abelian extensions of $G$ of degree $n$ grows polynomially in $n$, so $G$ has type $\PFP_2$ by Proposition \ref{extcondition}.
Let $G$ be PFR. Then $G$ is finitely presented and has UBERG, by Proposition \ref{KV}. Hence $G$ has type $\PFP_2$ if and only if it has type $\FP_2$. Since $G$ is finitely presented, it has type $\FP_2$.
\end{proof}

Recall that group extensions $K \to E \to G$ and $K \to E' \to G$ with $K$ abelian are said to be \emph{equivalent} if there is a commutative diagram
\[
\xymatrix{
K \ar[r] \ar@{=}[d] & E \ar[r] \ar[d] & G \ar@{=}[d] \\
K \ar[r] & E' \ar[r] & G;}
\]
they are said to be \emph{isomorphic} if there is a commutative diagram
\[
\xymatrix{
K \ar[r] \ar^\cong[d] & E \ar[r] \ar[d] & G \ar@{=}[d] \\
K \ar[r] & E' \ar[r] & G.}
\]
So equivalent abelian extensions are isomorphic. Moreover, isomorphic abelian extensions induce a $G$-automorphism of $K$; since irreducible $G$-modules are $1$-generated, the number of $G$-automorphisms is at most $\lvert K \rvert$. Therefore the number of equivalence classes of minimal abelian extensions of degree $n$ in one isomorphism class is at most  $n$.

We conclude:

\begin{prop}
\label{eq/iso}
Polynomial minimal abelian extension growth (that is, polynomial growth in the number of isomorphism classes of minimal extensions of $G$ with abelian kernel) is equivalent to polynomial growth in the number of equivalence classes of minimal extensions by non-isomorphic $G$-modules.
\end{prop}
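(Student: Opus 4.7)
The plan is to derive the equivalence directly from the two inequalities implicit in the discussion immediately preceding the proposition; all the real work is already done in the preamble.

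Let $a(n)$ denote the number of isomorphism classes of minimal abelian extensions of $G$ of order $n$, and let $b(n)$ denote the number of equivalence classes of such extensions, summed over non-isomorphic $G$-module kernels. First I would observe that since equivalent abelian extensions are (by definition) isomorphic, every isomorphism class of extensions contains at least one equivalence class. This gives the easy bound $a(n) \leq b(n)$, which already shows that polynomial growth of $b$ implies polynomial growth of $a$.

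For the reverse direction I would use the argument sketched in the preamble. Fix an isomorphism class of minimal abelian extensions of degree $n$ with kernel (isomorphic to) an irreducible $G$-module $K$ of order $n$. Any isomorphism of extensions between two equivalence classes in this isomorphism class induces a $G$-automorphism of $K$, and conversely $\mathrm{Aut}_G(K)$ acts on the set of equivalence classes with kernel $K$ with orbits exactly the isomorphism classes. Consequently the number of equivalence classes in a single isomorphism class of degree $n$ is at most $|\mathrm{Aut}_G(K)| \leq |K| = n$, using that an irreducible $G$-module is $1$-generated. Summing over isomorphism classes this yields $b(n) \leq n \cdot a(n)$.

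The sandwich $a(n) \leq b(n) \leq n \cdot a(n)$ then immediately gives the equivalence of polynomial growth: a bound $a(n) \leq n^c$ yields $b(n) \leq n^{c+1}$, while $b(n) \leq n^c$ already gives $a(n) \leq n^c$. There is no substantive obstacle in this argument; the proof is essentially bookkeeping, and the only thing to be careful about is matching the two quantities $a(n)$ and $b(n)$ precisely with the two growth conditions named in the statement.
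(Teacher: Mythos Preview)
Your argument is correct and is exactly the one the paper uses: the proposition is stated with ``We conclude'' after the preamble, and your sandwich $a(n) \leq b(n) \leq n \cdot a(n)$ simply makes explicit the two observations (equivalent implies isomorphic, and each isomorphism class contains at most $\lvert \mathrm{Aut}_G(K)\rvert \leq \lvert K\rvert = n$ equivalence classes) already recorded there.
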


Recall that, by the usual correspondence between the second cohomology of a group and its extensions, this second condition is equivalent to saying that $\sum_S \vert H^2(G,S) \vert$ grows polynomially in $k$, where the sum ranges over all irreducible $G$-modules of order $k$.

We can use this idea for an alternative proof that PFR implies type $\PFP_2$. It is shown in \cite[Theorem 3.9]{KV} that for finitely generated profinite groups, PFR is equivalent to PMEG (see Section \ref{sec:pfgpfrmore}) -- that is, polynomial growth in the number of isomorphism classes of minimal extensions. Clearly this implies polynomial minimal abelian extension growth, and in fact it is equivalent to it among finitely presented groups, by \cite[Theorem 4.1, Proposition 5.2, Proposition 5.4]{KV}. By our proposition, this is equivalent to polynomial growth in $\sum_S \vert H^2(G,S) \vert$; by Theorem \ref{extcondition}, groups of type $\PFP_1$ (which includes PFR groups by Proposition \ref{APFGring} and Lemma \ref{APFGPFP1}) have type $\PFP_2$ (over $\hat{\mathbb{Z}}$) if and only if $\sum_S (\vert H^2(G,S) \vert-1)$ grows polynomially, so the result follows.

We also observe, in contrast to \cite[Section 7]{KV}, that we can check the $\PFP_2$ property by considering the minimal presentation of a group.

Imitating \cite[Section 3]{KV}, we say that a presentation \eqref{eq:posfinpres} of $G$ has polynomial maximal $P$-stable \emph{abelian} subgroup growth if the number of maximal $P$-stable subgroups $S$ of $R$ of index $n$ with $R/S$ abelian grows polynomially in $n$.

We can now state an equivalent formulation of property $\PFP_2$.

\begin{prop}
\label{PFP2=absubgps}
Let $G$ be a profinite group of type $\PFP_1$ and let $\tilde{G}\to G$ be the universal Frattini cover of $G$. Then $G$ has type $\PFP_2$ if and only if $R = \ker(\tilde{G} \to G)$ has polynomial maximal $\tilde{G}$-stable abelian subgroup growth. %: precisely because we're counting all the minimal abelian extensions that aren't split.
\end{prop}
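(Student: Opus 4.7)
The plan is first to translate the condition on $R$ into a statement about the relation module, and then to compare a Frattini-based partial projective resolution with a minimal PFG resolution afforded by $\PFP_1$. Put $R^{\mathrm{ab}} := R/\overline{[R,R]}$; the conjugation action of $\tilde{G}$ on $R$ induces a $\hat{\mathbb{Z}}\llbracket G\rrbracket$-module structure on $R^{\mathrm{ab}}$, since $R$ acts trivially on its own abelianisation. Because $[R,R] \leq S$ whenever $R/S$ is abelian, the map $S \mapsto S/[R,R]$ is a bijection between closed $\tilde{G}$-stable subgroups of $R$ with abelian quotient and closed $\hat{\mathbb{Z}}\llbracket G\rrbracket$-submodules of $R^{\mathrm{ab}}$, preserving index. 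So the condition on $R$ is equivalent, by Proposition~\ref{prop:PMSMG}, to $R^{\mathrm{ab}}$ being a PFG $\hat{\mathbb{Z}}\llbracket G\rrbracket$-module; the task reduces to proving $G$ has type $\PFP_2$ if and only if $R^{\mathrm{ab}}$ is PFG (under the hypothesis that $G$ has type $\PFP_1$).

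The Frattini presentation $1\to R\to\tilde{G}\to G\to 1$, together with the fact that $\tilde{G}$ has cohomological dimension at most $1$, yields a standard partial projective resolution
\[ 0 \to R^{\mathrm{ab}} \to \hat{\mathbb{Z}}\llbracket G\rrbracket \otimes_{\hat{\mathbb{Z}}\llbracket\tilde{G}\rrbracket} I_{\hat{\mathbb{Z}}}\llbracket\tilde{G}\rrbracket \to \hat{\mathbb{Z}}\llbracket G\rrbracket \to \hat{\mathbb{Z}} \to 0. \]
On the other hand, $\PFP_1$ gives a partial resolution $P_1\to P_0\to\hat{\mathbb{Z}}\to 0$ with $P_0,P_1$ PFG, and $\PFP_2$ is equivalent to the second syzygy $K=\ker(P_1\to P_0)$ being PFG. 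Generalised Schanuel compares these two length-$2$ partial projective resolutions and shows that $R^{\mathrm{ab}}$ and $K$ are stably equivalent modulo the projective modules appearing in the two resolutions.

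To conclude without imposing UBERG, I would work through Theorem~\ref{extcondition}. Under $\PFP_1$, type $\PFP_2$ is equivalent to $\sum_{S\in\mathcal{S}_k}(|H^2(G,S)|-1)$ growing polynomially in $k$, while $R^{\mathrm{ab}}$ being PFG is, by Proposition~\ref{prop:PMSMG}, equivalent to $\sum_{S\in\mathcal{S}_k}(|\Hom_G(R^{\mathrm{ab}},S)|-1)$ growing polynomially. The 5-term exact sequence of the Lyndon--Hochschild--Serre spectral sequence for $1\to R\to\tilde{G}\to G\to 1$, together with the vanishing $H^2(\tilde{G},S)=0$ coming from projectivity of $\tilde{G}$, yields the four-term exact sequence
\[ 0 \to H^1(G,S) \to H^1(\tilde{G},S) \to \Hom_G(R^{\mathrm{ab}},S) \to H^2(G,S) \to 0. \]
Since $|H^2(G,S)|\le|\Hom_G(R^{\mathrm{ab}},S)|$, one direction is immediate: polynomial growth of the $\Hom$-sum (i.e.\ $R^{\mathrm{ab}}$ PFG) implies polynomial growth of the $H^2$-sum (i.e.\ $\PFP_2$).

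The main obstacle is the reverse direction, where one must show that the correction factor $|H^1(\tilde{G},S)/H^1(G,S)|$ is controlled on average. Every element of this kernel corresponds to a $G$-equivariant homomorphism $R^{\mathrm{ab}}\to S$ that extends to a derivation of $\tilde{G}$; in the Schanuel framework, controlling it amounts to showing that the (potentially non-PFG) projectives $\hat{\mathbb{Z}}\llbracket G\rrbracket$ and $\hat{\mathbb{Z}}\llbracket G\rrbracket \otimes_{\hat{\mathbb{Z}}\llbracket\tilde{G}\rrbracket} I_{\hat{\mathbb{Z}}}\llbracket\tilde{G}\rrbracket$ contribute only benignly to the stable equivalence between $K$ and $R^{\mathrm{ab}}$. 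I expect this to be the delicate part of the argument and would exploit the fact that $\tilde{G}$ is a Frattini cover (so the inflation $H^1(G,S)\hookrightarrow H^1(\tilde{G},S)$ is already an isomorphism on modules that ``see'' no relations in $R^{\mathrm{ab}}$) to localise the contribution of this correction factor to the head of $R^{\mathrm{ab}}$.
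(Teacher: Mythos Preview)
Your reduction to ``$R^{\mathrm{ab}}$ is PFG'' and the five-term sequence are correct, and the direction $R^{\mathrm{ab}}$ PFG $\Rightarrow$ $\PFP_2$ is fine. The gap you flag in the reverse direction is not actually an obstacle: for every irreducible $G$-module $S$ the inflation $H^1(G,S)\to H^1(\tilde G,S)$ is an \emph{isomorphism}, so the transgression $\Hom_G(R^{\mathrm{ab}},S)\to H^2(G,S)$ is bijective and the two sums you are comparing are literally equal. To see injectivity of the transgression, note that a nonzero $\phi\in\Hom_G(R^{\mathrm{ab}},S)$ is surjective, and the pushout extension is $\tilde G/\ker\phi\twoheadrightarrow G$; since $R\le\Phi(\tilde G)$ we have $S=R/\ker\phi\le\Phi(\tilde G/\ker\phi)$, so this extension is a nontrivial Frattini cover and cannot split. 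Your parenthetical remark about inflation being an isomorphism ``on modules that see no relations'' undersells the situation: it holds for all irreducible $S$, and that is exactly what closes the argument. The Schanuel discussion is then unnecessary.

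The paper's proof is organised differently but rests on the same fact. Rather than passing through $\Hom_G(R^{\mathrm{ab}},S)$, it quotes Hill to identify maximal $\tilde G$-stable abelian subgroups of $R$ directly with isomorphism classes of non-split minimal abelian extensions of $G$ (this identification is precisely the statement that every nonzero $\phi$ gives a non-split pushout, i.e.\ the Frattini argument above). It then uses Proposition~\ref{eq/iso} to pass from isomorphism classes to equivalence classes at the cost of a polynomial factor in $k$, and finally invokes Theorem~\ref{extcondition}. So the paper compares $m_k(R^{\mathrm{ab}})$ with $\sum_S(|H^2(G,S)|-1)$ up to a factor of $k$, whereas your route (once completed) gives $\sum_S(|\Hom_G(R^{\mathrm{ab}},S)|-1)=\sum_S(|H^2(G,S)|-1)$ on the nose. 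Both arguments ultimately hinge on ``Frattini $\Rightarrow$ non-split''; yours packages it cohomologically, the paper packages it via extension classes.
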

\begin{proof}
We may use the argument of \cite[Proposition 7.1]{KV}: the maximal $\tilde{G}$-stable subgroups of $R$ of degree $n$ correspond precisely to the isomorphism classes of non-split minimal abelian extensions of $G$ of degree $n$ by \cite[(3.2)]{Hill}; the number of these grows polynomially in $n$ if and only if the number of equivalence classes of non-split minimal abelian extensions of $G$ of degree $n$ does, just as in Proposition \ref{eq/iso}; since $G$ has type $\PFP_1$, this condition is equivalent to having type $\PFP_2$ by Theorem \ref{extcondition}.
\end{proof}

\subsection{Relative type \texorpdfstring{$\PFP_n$}{PFPn}}
\label{sec:relpfpn}

\begin{defn}
A closed normal subgroup $H$ of a profinite group $G$ has \emph{relative type $\PFP_n$ in $G$ over $R$} if all PFG projective $R \llbracket G/H \rrbracket$-modules have type $\PFP_n$ over $R \llbracket G \rrbracket$.
\end{defn}

 \begin{rem}
 Notice that in the analogous $\FP_n$ case, $H$ has relative type $\FP_n$ in all groups if and only if $H$ has type $\FP_n$, whereas when $G$ does not have type $\PFP_1$, the trivial subgroup (of type $\PFP_\infty$) does not have relative type $\PFP_1$ in $G$, so we need the two different definitions.
\end{rem}

\begin{thm}\label{relPFPn}
Let $G$ be a profinite group, $H$ be normal in $G$ and let $M$ be a profinite $R \llbracket G/H \rrbracket$-module. Suppose $H$ has relative $\PFP_m$ in $G$ over $R$.
\begin{enumerate}[(i)]
\item If $M$ has type $\PFP_n$ over $R \llbracket G \rrbracket$ (via restriction), then it has type $\PFP_{k}$ over $R \llbracket G/H \rrbracket$, where $k = \min(m+1,n)$.
\item If $M$ has type $\PFP_n$ over $R \llbracket G/H \rrbracket$, then it has type $\PFP_{l}$ over $R \llbracket G \rrbracket$, where $l = \min(m,n)$.
\end{enumerate}
\end{thm}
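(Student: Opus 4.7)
The plan is to prove both parts by induction, using the PFG projective cover of $M$ over $R\llbracket G/H\rrbracket$, the relative $\PFP_m$ hypothesis to control its behaviour over $R\llbracket G\rrbracket$, and Proposition \ref{moduletypes} to close up short exact sequences.

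The key preliminary observation (and the base cases) is that an $R\llbracket G/H\rrbracket$-module $M$ is PFG as $R\llbracket G\rrbracket$-module (via the quotient $G\to G/H$) if and only if it is PFG as $R\llbracket G/H\rrbracket$-module: the closed $R\llbracket G\rrbracket$-submodules coincide with the closed $R\llbracket G/H\rrbracket$-submodules, so the generating probabilities $P(M,k)$ are literally the same. This settles part (i) when $k = \min(m+1,n)=0$ and part (ii) when $l = \min(m,n)=0$, and also lets us form a PFG projective cover $P_0 \twoheadrightarrow M$ over $R\llbracket G/H\rrbracket$ (by Lemma \ref{projcover}) whenever $M$ is known to be PFG over either ring. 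By the relative hypothesis, such a $P_0$ automatically has type $\PFP_m$ over $R\llbracket G\rrbracket$.

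For part (ii), I would induct on $l\geq 1$ (assuming $l\leq m$ and $l\leq n$). Take a PFG projective cover $P_0 \twoheadrightarrow M$ over $R\llbracket G/H\rrbracket$ with kernel $K_0$. Then $P_0$ has type $\PFP_m$, and in particular $\PFP_l$, over $R\llbracket G\rrbracket$. Since $M$ has type $\PFP_n$ over $R\llbracket G/H\rrbracket$, the kernel $K_0$ has type $\PFP_{n-1}$ over $R\llbracket G/H\rrbracket$, and $n-1\geq l-1$, so the inductive hypothesis gives $K_0$ of type $\PFP_{l-1}$ over $R\llbracket G\rrbracket$. Proposition \ref{moduletypes}(i) applied to $0\to K_0\to P_0\to M\to 0$ over $R\llbracket G\rrbracket$ then yields $M$ of type $\PFP_l$ over $R\llbracket G\rrbracket$.

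For part (i), I would induct on $k\geq 1$ (assuming $k\leq m+1$ and $k\leq n$). Again take a PFG projective cover $P_0 \twoheadrightarrow M$ over $R\llbracket G/H\rrbracket$ with kernel $K_0$; by the relative hypothesis $P_0$ has type $\PFP_m$, hence $\PFP_{k-1}$ (since $k-1\leq m$), over $R\llbracket G\rrbracket$. Applying Proposition \ref{moduletypes}(ii) to the same short exact sequence, now viewed over $R\llbracket G\rrbracket$ (using that $M$ has type $\PFP_n\supseteq \PFP_k$), gives $K_0$ of type $\PFP_{k-1}$ over $R\llbracket G\rrbracket$. The inductive hypothesis then promotes this to $K_0$ of type $\PFP_{k-1}$ over $R\llbracket G/H\rrbracket$. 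Since $P_0$ is projective over $R\llbracket G/H\rrbracket$, another application of Proposition \ref{moduletypes}(i) to the SES over $R\llbracket G/H\rrbracket$ delivers $M$ of type $\PFP_k$ over $R\llbracket G/H\rrbracket$.

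The potentially tricky point, and really the heart of the argument, is the bookkeeping: one must pick the projective cover over $R\llbracket G/H\rrbracket$ (so that its kernel remains an $R\llbracket G/H\rrbracket$-module, allowing one to feed it back into the induction), yet also exploit the relative $\PFP_m$ hypothesis, which transports exactly this projective cover across to $R\llbracket G\rrbracket$. Once the statement is unravelled to $k\leq m+1$ for (i) versus $l\leq m$ for (ii), the asymmetry ($m+1$ vs $m$) is precisely explained by whether one uses Proposition \ref{moduletypes}(i) on the projective side (which is $\PFP_\infty$ over $R\llbracket G/H\rrbracket$, buying an extra degree) or on the $R\llbracket G\rrbracket$ side (where $P_0$ only has $\PFP_m$). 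No spectral sequence is required.
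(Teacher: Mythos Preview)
Your proof is correct and follows essentially the same route as the paper: the same preliminary observation that PFG over $R\llbracket G\rrbracket$ and $R\llbracket G/H\rrbracket$ coincide, the same choice of a PFG projective $R\llbracket G/H\rrbracket$-module surjecting onto $M$, and the same applications of Proposition~\ref{moduletypes} to the resulting short exact sequence. The only cosmetic differences are that the paper inducts on $n$ rather than on $k$ (resp.\ $l$), and takes an arbitrary PFG projective rather than specifically the projective cover; neither affects the argument.
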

\begin{proof}
%All finitely generated projective $R \llbracket G/H \rrbracket$-modules have type $\PFP_\infty$ over $R \llbracket G \rrbracket$ by additivity.
First, note that $M$ is PFG as an $R \llbracket G \rrbracket$-module if and only if it is PFG as an $R \llbracket G/H \rrbracket$-module, because the action is by restriction. %so a set of generators for this module as an $R \llbracket G \rrbracket$-module generates it as an $R \llbracket G/H \rrbracket$-module, and vice versa.
\begin{enumerate}[(i)]
\item Use induction on $n$. When $n=0$ we are done. Take a PFG projective $R \llbracket G/H \rrbracket$-module $P$ and an epimorphism $P \to M$ with kernel $K$. The module $K$ has type $\PFP_{\min(m,n-1)}$ over $R \llbracket G \rrbracket$ by Proposition \ref{moduletypes}, so by hypothesis it has type $$\min(m+1,\min(m,n-1)) = \min(m,n-1)$$ over $R \llbracket G/H \rrbracket$. Therefore $M$ has type $\min(m,n-1)+1=k$ over $R \llbracket G/H \rrbracket$.
\item Use induction on $n$. When $n=0$ we are done. Take a PFG projective $R \llbracket G/H \rrbracket$-module $P$ and an epimorphism $P \to M$ with kernel $K$. Now $K$ has type $\PFP_{n-1}$ over $R \llbracket G/H \rrbracket$-module by Proposition \ref{moduletypes}, so by hypothesis it has type $\PFP_{\text{min}(m,n-1)}$ over $R \llbracket G \rrbracket$. Also $P$ has type $\PFP_m$ over $R \llbracket G \rrbracket$, so by Proposition \ref{moduletypes} $M$ has type $\PFP_{\text{min}(m,n)}$ over $R \llbracket G \rrbracket$.
\end{enumerate}
\end{proof}

In particular this holds for $M=R$.

\begin{cor}
\label{relPFPn2}
Suppose that $H$ has relative type $\PFP_m$ in $G$ over $R$.
\begin{enumerate}[(i)]
\item If $G$ has type $\PFP_n$ over $R$, then $G/H$ has type $\PFP_{\min(m+1,n)}$ over $R$.
\item If $G/H$ has type $\PFP_n$ over $R$, then $G$ has type $\PFP_{\min(m,n)}$ over $R$.
\end{enumerate}
\end{cor}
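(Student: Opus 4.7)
The plan is to deduce this corollary as an immediate consequence of Theorem \ref{relPFPn} applied to the module $M = R$. The first step is to recall the definitions: by definition, $G$ has type $\PFP_n$ over $R$ precisely when $R$ (equipped with the trivial $G$-action) has type $\PFP_n$ as an $R \llbracket G \rrbracket$-module, and similarly $G/H$ has type $\PFP_n$ over $R$ precisely when $R$ has type $\PFP_n$ as an $R \llbracket G/H \rrbracket$-module. Moreover, pulling back the trivial $R \llbracket G/H \rrbracket$-module structure on $R$ along the quotient map $G \twoheadrightarrow G/H$ yields exactly the trivial $R \llbracket G \rrbracket$-module structure, so ``restriction'' in the sense of Theorem \ref{relPFPn} matches the two module structures on $R$ that appear implicitly in the two definitions.

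With this dictionary in place, part (i) of the corollary is just part (i) of Theorem \ref{relPFPn} with $M = R$: the hypothesis that $G$ has type $\PFP_n$ over $R$ becomes the hypothesis that $M = R$ has type $\PFP_n$ over $R \llbracket G \rrbracket$, and the conclusion that $M$ has type $\PFP_{\min(m+1,n)}$ over $R \llbracket G/H \rrbracket$ translates into $G/H$ having type $\PFP_{\min(m+1,n)}$ over $R$. Part (ii) follows from part (ii) of Theorem \ref{relPFPn} in the same way, with the roles of source and target swapped.

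There is essentially no obstacle here since the real work has already been carried out in the proof of Theorem \ref{relPFPn} (which used induction on $n$, Proposition \ref{moduletypes} applied to a short exact sequence coming from an epimorphism from a PFG projective $R \llbracket G/H \rrbracket$-module, and the definition of relative type $\PFP_m$ to promote type from $R \llbracket G/H \rrbracket$ to $R \llbracket G \rrbracket$). The only point to check is the compatibility of the two module structures on the trivial module $R$, which is immediate.
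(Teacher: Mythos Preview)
Your proposal is correct and takes essentially the same approach as the paper, which simply notes that the corollary is the special case $M=R$ of Theorem \ref{relPFPn}. Your additional remarks about the compatibility of the two trivial module structures on $R$ are accurate but not needed beyond the observation that restriction along $G \twoheadrightarrow G/H$ carries the trivial $R\llbracket G/H\rrbracket$-module to the trivial $R\llbracket G\rrbracket$-module.
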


\begin{rem}
Exactly the same approach gives the analogous classical result (see \cite[Proposition 2.7]{Bieri} for example) that relates type $\FP_n$ conditions for extensions and quotients of abstract groups. As far as we know, this is the first proof of this result which avoids the use of spectral sequences; those unfamiliar with the mysteries of spectral sequences may find this new perspective enlightening.
\end{rem}

At the moment the property of relative type $\PFP_n$, and thus the behaviour of type $\PFP_n$ under extensions, remains mysterious. But we have the following result:

\begin{prop}\label{prop:PFPndirectprod}
%Suppose $R \llbracket H \rrbracket$ is PFG. A $G$-module $M$ has type $\PFP_n$ over $R$ if and only if the $G \times H$-module $\Ind_G^{G \times H} M$ has type $\PFP_n$ over $R$ for any $H$.
Suppose $G$ and $H$ are profinite groups. Let $M$ be an $R \llbracket G \rrbracket$-module and $N$ an $R \llbracket H \rrbracket$-module. Suppose $M$ has type $\PFP_m$ and $N$ has type $\PFP_n$. Then $M \otimes_R N$ has type $\PFP_{\min(m,n)}$ as an $R \llbracket G \times H \rrbracket$-module.
\end{prop}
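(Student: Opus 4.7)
The plan is to construct a projective resolution of $M \otimes_R N$ over $R \llbracket G \times H \rrbracket$ whose first $\min(m,n)+1$ terms are PFG, by tensoring projective resolutions of $M$ and $N$. Using Lemma \ref{projcover} and Proposition \ref{Schanuel} (iteratively via projective covers), choose a projective resolution $P_\ast \to M$ over $R \llbracket G \rrbracket$ with $P_i$ PFG for $i \leq m$, and a projective resolution $Q_\ast \to N$ over $R \llbracket H \rrbracket$ with $Q_j$ PFG for $j \leq n$. Form the total complex $T_\ast$ of the double complex $P_\ast \otimes_R Q_\ast$, so that
\[
T_k \;=\; \bigoplus_{i+j=k} P_i \otimes_R Q_j;
\]
via the isomorphism $R \llbracket G \rrbracket \otimes_R R \llbracket H \rrbracket \cong R \llbracket G \times H \rrbracket$, this is a complex of $R \llbracket G \times H \rrbracket$-modules augmented to $M \otimes_R N$.

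Checking that $T_\ast \to M \otimes_R N$ is a projective resolution is routine. Each summand $P_i \otimes_R Q_j$ is a summand of $R \llbracket G \rrbracket^{a} \otimes_R R \llbracket H \rrbracket^{b} \cong R \llbracket G \times H \rrbracket^{ab}$ and is hence projective over $R \llbracket G \times H \rrbracket$. Exactness of the total complex is the standard double complex argument, valid because projective profinite $R \llbracket G \rrbracket$- and $R \llbracket H \rrbracket$-modules are flat as profinite $R$-modules (since $R \llbracket G \rrbracket$ and $R \llbracket H \rrbracket$ are profinitely free over $R$, and flatness is inherited by summands).

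The main obstacle is to show that each $T_k$ with $k \leq \min(m,n)$ is PFG over $R \llbracket G \times H \rrbracket$. Since $T_k$ is a finite direct sum of the modules $P_i \otimes_R Q_j$ with $i \leq m$ and $j \leq n$, and finite direct sums of PFG modules are PFG by Lemma \ref{lem:PFGquotext}, this reduces to the claim that $P \otimes_R Q$ is PFG over $R \llbracket G \times H \rrbracket$ whenever $P$ is PFG projective over $R \llbracket G \rrbracket$ and $Q$ is PFG projective over $R \llbracket H \rrbracket$. The natural route, and the technical heart of the proof, is via the characterisation of PFG by polynomial maximal submodule growth in Proposition \ref{prop:PMSMG}: one bounds
\[
\sum_{S}\bigl(|\Hom_{R \llbracket G \times H \rrbracket}(P \otimes_R Q, S)| - 1\bigr),
\]
where $S$ ranges over simple $R \llbracket G \times H \rrbracket$-modules of order $k$, by applying the tensor-hom adjunction
\[
\Hom_{R \llbracket G \times H \rrbracket}(P \otimes_R Q, S) \;\cong\; \Hom_{R \llbracket G \rrbracket}\bigl(P, \Hom_{R \llbracket H \rrbracket}(Q, S)\bigr)
\]
and then using, after passage to a splitting field if necessary, that simple $R \llbracket G \times H \rrbracket$-modules decompose as tensor products of simples over the factor rings. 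The polynomial bounds for $P$ over $R \llbracket G \rrbracket$ and $Q$ over $R \llbracket H \rrbracket$ then combine (with the usual divisor-function book-keeping when summing over factorisations $k = k_1 k_2$) to give the required polynomial bound, and the conclusion follows.
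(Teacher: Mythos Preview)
Your overall architecture matches the paper exactly: tensor the two partial projective resolutions to get a projective resolution of $M \otimes_R N$, and reduce everything to the claim that $P \otimes_R Q$ is PFG over $R\llbracket G \times H\rrbracket$ whenever $P$ and $Q$ are PFG projective over the factor rings. The exactness and projectivity checks are fine.

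The gap is precisely at what you yourself call the technical heart. Over the residue fields of $R$ (which are finite, hence not algebraically closed), simple $R\llbracket G \times H\rrbracket$-modules do \emph{not} in general decompose as tensor products of simples for $G$ and $H$; the tensor decomposition only holds for absolutely irreducible representations. Your phrase ``after passage to a splitting field if necessary'' gestures at the fix but does not supply it: Proposition~\ref{prop:PMSMG} characterises PFG via simples over the residue fields of $R$, and you have not explained why polynomial bounds there are equivalent to polynomial bounds for absolutely irreducibles over \emph{all} finite extensions $k'$, which is what the tensor decomposition needs. Likewise, the tensor--hom adjunction you write down is correct, but $\Hom_{R\llbracket H\rrbracket}(Q,S)$ is not a simple $R\llbracket G\rrbracket$-module in general, so you cannot feed it directly into the PMSMG bound for $P$; the ``divisor-function book-keeping when summing over factorisations $k=k_1k_2$'' presupposes the tensor factorisation you have not established. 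The paper closes this gap by invoking the analogue of \cite[Lemma~6.8]{KV}: one shows that a projective module $P$ is PFG if and only if there is a constant $b$ with $f^G_n(P,k') \leq |k'|^{bn}$ for all $n$ and all finite extensions $k'$ of residue fields of $R$, where $f^G_n$ counts homomorphisms into (Galois orbits of) absolutely irreducible $k'$-representations. With this reformulation in hand, the tensor decomposition of absolutely irreducibles for $G \times H$ (as in \cite[Theorem~6.4]{KV}) gives the required bound for $P \otimes_R Q$ immediately. You should either prove this reformulation or cite it; without it, the argument is incomplete.
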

\begin{proof}
We will show that if $P$ is a PFG projective $R \llbracket G \rrbracket$-module and $Q$ a PFG projective $R \llbracket H \rrbracket$-module, then $P \otimes_R Q$ is PFG as an $R \llbracket G \times H \rrbracket$-module. The result follows by taking tensor products of partial PFG projective resolutions for $M$ and $N$.

%Since $\Ind$ is an exact functor, it is enough to show that a projective $G$-module $P$ is PFG if and only if $\Ind P = P \otimes_{R \llbracket G \rrbracket} R \llbracket G \times H \rrbracket \cong P \otimes_R R \llbracket H \rrbracket$ is PFG.

Each maximal submodule of $P$ gives an epimorphism onto an irreducble $k\llbracket G \rrbracket$-module for some field $k$ which is a quotient of $R$. Now define the function $f^G_n(P,k')$ over all finite extensions $k'$ of fields $k$ which appear as quotients of $R$, as follows. Let $\mathcal{S}_n(G,k')$ be the set of absolutely irreducible representations of $G$ of dimension $n$ which are defined over $k'$; we think of elements of $\mathcal{S}_n(G,k')$ as $R \llbracket G \rrbracket$-modules via restriction along $R \to k'$. \cite[Lemma 6.7]{KV} gives a bijection $\Phi_k$ from Galois orbits of irreducible $\bar{k}\llbracket G \rrbracket$-modules to irreducible $k\llbracket G \rrbracket$-modules, where $\bar{k}$ is the algebraic closure of $k$, and we identify $k'$ with a subfield of $\bar{k}$. Then $f^G_n(P,k') = \sum_{M \in \mathcal{S}_n(G,k')} (|\Hom_{R \llbracket G \rrbracket}(P,\Phi_k(M))|-1)$. Exactly the same approach as \cite[Lemma 6.8]{KV} shows that $P$ is PFG if and only if there is some $b$ such that $f^G_n(P,k') \leq |k'|^{bn}$ for all $n$ and all $k'$ where it is defined.

As in the proof of \cite[Theorem 6.4]{KV}, the absolutely irreducible representations of $G \times H$ over $k'$ are precisely the tensor products of absolutely irreducible representations of $G$ and $H$ over $k'$. As there, we deduce that if $f^G_n(P,k') \leq |k'|^{bn}$ for some $b$, and similarly for $f^H_n(Q,k')$, there exists a $c$ such that $f^{G \times H}_n(P \otimes_R Q,k') \leq |k'|^{cn}$, and $P \otimes_R Q$ is PFG, as required.

%So $f^{G \times H}_n(\Ind^{G \times H}_G P,k') = \sum{M \in S^{G \times H}_n} (\Hom_{R \llbracket G \times H \rrbracket}(P \otimes_R R \llbracket H \rrbracket,M)-1) \leq \sum_{n_1n_2=n} \sum_{M_1 \in S^G_{n_1},M_2 \in S^H_{n_2}} ... I promise it does work, it just gets really messy
\end{proof}

\begin{cor}
If $G$ has type $\PFP_m$ over $R$ and $H$ has type $\PFP_n$ over $R$, then $G \times H$ has type $\PFP_{\min(m,n)}$ over $R$.
\end{cor}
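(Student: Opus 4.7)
The plan is to deduce this corollary directly from Proposition \ref{prop:PFPndirectprod} by specializing to trivial modules. Recall that by definition, $G$ has type $\PFP_m$ over $R$ means that $R$, viewed as the trivial $R\llbracket G \rrbracket$-module, has type $\PFP_m$; similarly for $H$ and $n$.

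First I would take $M = R$ as the trivial $R\llbracket G \rrbracket$-module and $N = R$ as the trivial $R\llbracket H \rrbracket$-module. By hypothesis these have types $\PFP_m$ and $\PFP_n$ respectively. Applying Proposition \ref{prop:PFPndirectprod} gives that $M \otimes_R N$ has type $\PFP_{\min(m,n)}$ as an $R\llbracket G \times H \rrbracket$-module.

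Finally I would identify $R \otimes_R R \cong R$ via the natural multiplication map, and observe that under this isomorphism the $G \times H$-action (induced from the diagonal action on the tensor product of trivial modules) is again trivial. Thus $R$, viewed as the trivial $R\llbracket G \times H \rrbracket$-module, has type $\PFP_{\min(m,n)}$, which is precisely the statement that $G \times H$ has type $\PFP_{\min(m,n)}$ over $R$.

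There is no real obstacle here: the only thing to check is the identification of the tensor product with the trivial $(G \times H)$-module $R$, which is immediate. All the substantive work lies in Proposition \ref{prop:PFPndirectprod}.
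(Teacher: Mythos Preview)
Your proof is correct and is exactly the intended argument: the corollary follows immediately from Proposition~\ref{prop:PFPndirectprod} by taking $M=N=R$ with trivial action and using $R\otimes_R R\cong R$ as trivial $R\llbracket G\times H\rrbracket$-module. The paper states the corollary without proof precisely because this specialization is immediate.
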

%\begin{proof}
%By the proposition, $R \llbracket (G \times H)/G \rrbracket = \Ind^{G \times H}_G R$ has type $\PFP_n$ over $R$. Since type $\PFP_n$ is closed under finite direct sums and summands, all finitely generated projective $R \llbracket (G \times H)/G \rrbracket$-modules have type $\PFP_n$ over $R$, as required.
%\end{proof}

Compare this to the result in \cite[Theorem 6.4]{KV} that UBERG is preserved by (finite) direct products.

\section{Examples}
\label{examples}

%When the profinite ring $R$ is PFG, $R$-modules have type $\PFP_n$ over $R$ if and only if they have type $\FP_n$ over $R$. 

In this section we will construct some examples of groups of type $\PFP_1$ over $R$ although the group ring $R \llbracket G \rrbracket$ is not PFG, examples of groups of type $\FP_1$ over $\hat{\mathbb{Z}}$ which do not have type $\PFP_1$, and examples of groups of type $\PFP_n$ but not $\PFP_{n+1}$ over $\hat{\mathbb{Z}}$ for all $n$.

\subsection{\texorpdfstring{$G$ of type $\PFP_1$ over $R$ such that $R \llbracket G \rrbracket$ is not PFG}{G PFP1 != R[[G]] PFG}} The examples studied in \cite{Damian} suggest the strategy of looking at products of finite groups.

\begin{prop}\label{prop:finsimplegroup}
Let $S$ be a finite group and let $G$ be an infinite product of copies of $S$. For any prime $p$ not dividing the order of $S$, $\mathbb{Z}_p \llbracket G \rrbracket$ is not PFG, but $G$ has type $\PFP_1$ over $\mathbb{Z}_p$.
\end{prop}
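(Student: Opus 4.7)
\emph{Part 1: $\mathbb{Z}_p\llbracket G \rrbracket$ is not PFG.} The plan is to exhibit infinitely many pairwise non-isomorphic irreducible $\mathbb{Z}_p\llbracket G \rrbracket$-modules of a common order, then invoke Proposition~\ref{prop:PMSMG}. Since $p \nmid |S|$ and $S \neq 1$, Maschke's theorem guarantees that $\mathbb{F}_p[S]$ admits a nontrivial irreducible representation $\rho \colon S \to \mathrm{GL}(V)$. For each $n \in \mathbb{N}$, pulling back $V$ along the projection $\pi_n \colon G = S^{\mathbb{N}} \to S$ onto the $n$-th coordinate gives an irreducible $\mathbb{Z}_p\llbracket G \rrbracket$-module $V_n$ (the action of $\mathbb{Z}_p\llbracket G \rrbracket$ factors through the irreducible action of $\mathbb{F}_p[S]$). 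Pick $s \in S$ with $\rho(s) \neq 1$; the element $g_n \in G$ with $s$ in position $n$ and $1$ elsewhere acts as $\rho(s)$ on $V_n$ but trivially on $V_m$ for $m \neq n$, so any $G$-equivariant isomorphism $V_n \to V_m$ would force $\rho(s) = 1$, proving the $V_n$ are pairwise non-isomorphic. Applying Proposition~\ref{prop:PMSMG} to $R = \mathbb{Z}_p\llbracket G \rrbracket$ viewed as a module over itself, and using $\Hom_R(R,W) \cong W$ for every irreducible $W$, we obtain $\sum_{W \in \mathcal{S}^R_{|V|}}(|W|-1) = \infty$, so $R$ is not PFG.

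\emph{Part 2: $G$ has type $\PFP_1$ over $\mathbb{Z}_p$.} The plan is to apply Theorem~\ref{extcondition} with $M = \mathbb{Z}_p$ over $R = \mathbb{Z}_p\llbracket G \rrbracket$, verifying the growth condition for $m = 0$ and $m = 1$. Two preparatory observations are needed. First, $G$ is pro-$p'$: every open normal subgroup of $G$ contains the kernel of a projection $G \to S^F$ for some finite $F \subseteq \mathbb{N}$, and $|S^F|$ is coprime to $p$. Second, every irreducible $\mathbb{Z}_p\llbracket G \rrbracket$-module $W$ is a finite $\mathbb{Z}_p$-module, so $pW = W$ is ruled out by Nakayama's lemma and irreducibility forces $pW = 0$; hence $W$ is a finite $\mathbb{F}_p$-vector space. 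For $m = 0$, $H^0(G,W) = W^G$ is non-zero exactly when $W$ is the trivial module $\mathbb{F}_p$ (of order $p$), contributing at most $p-1$ to the sum. For $m = 1$, the action on any irreducible $W$ factors through a finite quotient $G/U$ of order coprime to $p$; for every open normal $U' \subseteq U$, restriction-corestriction shows $|G/U'| \cdot H^1(G/U', W) = 0$ while $|G/U'|$ acts invertibly on the $p$-group $W$, so $H^1(G/U', W) = 0$. Passing to the colimit yields $H^1(G,W) = 0$ and the $m=1$ sum vanishes, so Theorem~\ref{extcondition} delivers $\PFP_1$.

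The conceptual point of the argument is that the single hypothesis $p \nmid |S|$ plays two complementary roles: on one hand it supplies a nontrivial irreducible $\mathbb{F}_p$-representation of $S$ (which, pulled back along the many projections, produces the abundance of irreducible $G$-modules needed to break PFG-ness of the group ring); on the other it renders $G$ pro-$p'$, which forces the cohomological vanishing that underwrites $\PFP_1$. Once this twofold role is identified, neither part presents a serious obstacle.
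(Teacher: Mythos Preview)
Your proof is correct and follows essentially the same approach as the paper's. The paper's version is terser: for Part~1 it simply asserts that $G$ has infinitely many irreducible $\mathbb{F}_p$-representations of dimension $\leq |S|$, and for Part~2 it cites \cite[Corollary~7.3.3]{RZ} directly for the vanishing of $H^1(G,M)$ when $p\nmid |G|$, then invokes Theorem~\ref{extcondition}. Your version spells out the construction of the irreducibles $V_n$ explicitly, proves the pro-$p'$ cohomological vanishing by hand via restriction--corestriction, and treats the $m=0$ case of Theorem~\ref{extcondition} carefully; these are elaborations rather than genuinely different ideas. One cosmetic point: the invocation of Maschke's theorem is unnecessary---any nontrivial finite group has a nontrivial irreducible $\mathbb{F}_p$-representation regardless of characteristic, since the regular representation has a nontrivial composition factor.
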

\begin{proof}
The group $G$ has infinitely many irreducible representations of dimension $\leq \vert S\vert$ over $\mathbb{F}_p$, so $\mathbb{Z}_p \llbracket G \rrbracket$ is not PFG. 

On the other hand, \cite[Corollary 7.3.3]{RZ} shows that $H^1(G,M) =0$ is trivial for all irreducible $\mathbb{F}_p\llbracket G\rrbracket$-modules $M$ with $p \notin \pi$. Therefore, by Theorem \ref{extcondition}, we can see that $G$ has type $\PFP_1$ over $\mathbb{Z}_p$. 
\end{proof}

\begin{rem}
Similarly, if we denote by $\pi= \pi(S)$ the set of prime divisors of the order of $S$, the group $S^\mathbb{N}$ has type $\PFP_1$ over $\mathbb{Z}_{\pi'} = \prod_{p\notin \pi} \mathbb{Z}_p$.
\end{rem}

By varying the group $S$, we get examples of this behaviour over $\mathbb{Z}_p$ for all primes $p$. We do not know of any such examples over $\hat{\mathbb{Z}}$, and leave it as a question.

\begin{question}
Are there groups of type $\PFP_1$ over $\hat{\mathbb{Z}}$ which do not have UBERG?
\end{question}

By Corollary \ref{Damian3}, a necessary and sufficient condition for this is that the number of irreducible modules $M$ of order $n$ for such a group $G$ would grow faster than polynomially in $n$, but the number for which $\delta_G(M)+h'_G(M)$ is non-zero would grow polynomially.

\cite{Damian} constructs groups with UBERG which are not PFG, but another remaining question is whether such groups must be finitely generated.

\begin{question}
Are there groups with UBERG which are not finitely generated?
\end{question}
 
%\subsection{Pronilpotent groups and UBERG}
On the other hand, examples like the above cannot appear among pro\-nil\-po\-tent groups. In future work with S. Kionke, we show:
 
\begin{prop}\label{prop:pronilp}
Let $G$ be a pronilpotent group. Then $G$ has UBERG if and only if $G$ is finitely generated.
\end{prop}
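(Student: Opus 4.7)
The plan is to handle the two directions separately. The forward implication follows easily from earlier results in the paper, while the reverse will hinge on a character-counting argument combined with an input from analytic number theory. For the easy direction, if $G$ is finitely generated and pronilpotent then $G$ is prosoluble, so Remark~\ref{rem:prop6.1notfingen} gives that $G$ is PFG, and Corollary~\ref{PFGtoUBERG} then yields UBERG.

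For the converse, suppose $G$ has UBERG. I would write $G = \prod_p G_p$ as the product of its Sylow pro-$p$ subgroups and set $d_p := \dim_{\mathbb{F}_p}(G_p/\Phi(G_p))$. A pronilpotent group is topologically $d$-generated if and only if $d_p \leq d$ for every $p$, so the task reduces to bounding $\sup_p d_p$. The key construction is of many $1$-dimensional irreducible $\hat{\mathbb{Z}}\llbracket G \rrbracket$-modules of small order: for any pair of distinct primes $(p,q)$ with $p \mid q-1$, the field $\mathbb{F}_q$ contains $\mu_p$, so each continuous homomorphism $G_p \twoheadrightarrow G_p/\Phi(G_p) \to \mu_p \hookrightarrow \mathbb{F}_q^\times$ extends by the trivial character on the other Sylow factors to a continuous character $\chi \colon G \to \mathbb{F}_q^\times$, and hence to a $1$-dimensional irreducible $\mathbb{F}_q\llbracket G \rrbracket$-module; distinct characters yield non-isomorphic modules. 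The number of irreducible $\hat{\mathbb{Z}}\llbracket G \rrbracket$-modules of order $q$ is therefore at least $p^{d_p}$, and UBERG provides a constant $c$ with $p^{d_p} \leq q^c$ for every admissible pair $(p,q)$. Dirichlet's theorem already gives such a $q$ for each $p$ and in particular forces $d_p < \infty$ for every $p$.

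To upgrade this pointwise finiteness to a uniform bound I would invoke Linnik's theorem, which guarantees an absolute constant $L$ such that the least prime $q \equiv 1 \pmod p$ satisfies $q \leq p^L$. Substituting yields $p^{d_p} \leq p^{Lc}$, so $d_p \leq Lc$ independently of $p$ and $G$ is topologically finitely generated. The main obstacle is precisely this invocation of Linnik's theorem, a genuinely analytic input. A possible softening would be to run the counting simultaneously over all prime divisors of $q-1$ to obtain $\sum_{p' \mid q-1} d_{p'} \log p' \leq c \log q$ and then to average over primes $q \leq N$ via the prime number theorem in arithmetic progressions, but any such attempt still seems to require nontrivial input on the distribution of primes in residue classes, so I would expect some number-theoretic ingredient to remain unavoidable.
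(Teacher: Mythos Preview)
The paper does not actually prove this proposition: immediately before the statement it says ``In future work with S.~Kionke, we show:'', and no argument is supplied. So there is no proof in the paper to compare your attempt against.

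On its own merits your argument is correct. The forward direction is exactly as you say. For the converse, the character count is sound: irreducible $\hat{\mathbb{Z}}\llbracket G\rrbracket$-modules of prime order $q$ are precisely the one-dimensional $\mathbb{F}_q$-representations of $G$, i.e.\ continuous characters $G\to\mathbb{F}_q^\times$; for $p\mid q-1$ there are $p^{d_p}$ of these factoring through $G_p/\Phi(G_p)\to\mu_p$ (trivial on the other Sylow factors), and distinct characters give non-isomorphic modules. UBERG then forces $p^{d_p}\le q^c$, and Linnik's theorem on the least prime $q\equiv 1\pmod p$ bounds $d_p$ uniformly. One cosmetic point: Linnik's bound is usually stated as $q\le Cp^L$ with an implied constant, so the conclusion reads $d_p\le Lc + c\log C/\log p$, which is still uniformly bounded. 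Your suspicion that some analytic input on primes in arithmetic progressions is unavoidable seems well founded; no purely algebraic substitute is apparent.
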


The class of prosoluble groups appears often in these contexts as groups where pathological behaviour cannot occur. For example, fi\-ni\-te\-ly generated prosoluble groups are PFG, and prosoluble groups have type $\PFP_1$ if and only if they are finitely generated by Corollary \ref{PFP1} and \cite[Remark 3.5(a)]{CC2}. So it is very natural to ask:

\begin{question}\label{quest:prosoluble}
Are all prosoluble groups with UBERG finitely generated?
\end{question}

\subsection{Type \texorpdfstring{$\FP_1$}{FP1} and type \texorpdfstring{$\PFP_1$}{PFP1}}
\label{FP1vsPFP1}

Clearly type $\PFP_1$ over $R$ implies type $\FP_1$ over $R$; we show the converse does not hold.

\begin{prop}\label{prop:free_not_PFP}
Let $F_n$ be the free profinite group on $n$ generators. For $n > 1$, $F_n$ does not have type $\PFP_1$ over $\hat{\mathbb{Z}}$.
\end{prop}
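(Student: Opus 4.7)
The plan is to apply the cohomological characterisation from Corollary \ref{Damian3}. Since $F_n$ is finitely generated, the augmentation ideal $I_{\hat{\mathbb{Z}}}\llbracket F_n \rrbracket$ is finitely generated, so $F_n$ has type $\FP_1$ over $\hat{\mathbb{Z}}$. Thus, by Corollary \ref{Damian3}, $F_n$ has type $\PFP_1$ if and only if the number of irreducible $F_n$-modules $M$ with $H^1(F_n,M) \neq 0$ grows polynomially in $|M|$. The strategy will be to show that \emph{every} non-trivial irreducible $F_n$-module has non-vanishing first cohomology, and then to invoke the failure of UBERG for non-abelian free profinite groups to conclude that the count of such modules grows too fast.

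For the cohomology computation, recall that the profinite version of the Nielsen–Schreier formula gives that the augmentation ideal of $F_n$ is free as a $\hat{\mathbb{Z}}\llbracket F_n \rrbracket$-module of rank $n$, yielding the free resolution
\[
0 \to \hat{\mathbb{Z}}\llbracket F_n \rrbracket^n \to \hat{\mathbb{Z}}\llbracket F_n \rrbracket \to \hat{\mathbb{Z}} \to 0.
\]
Applying $\Hom_{\hat{\mathbb{Z}}\llbracket F_n \rrbracket}(-,M)$ produces an exact sequence
\[
0 \to M^{F_n} \to M \to M^n \to H^1(F_n,M) \to 0,
\]
from which $|H^1(F_n,M)| = |M|^{n-1}\cdot |M^{F_n}|$. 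When $M$ is a non-trivial irreducible module, $M^{F_n}$ is a proper $F_n$-submodule, hence equal to $0$; so $|H^1(F_n,M)| = |M|^{n-1}$, which is at least $|M|\geq 2$ for $n>1$. In particular $H^1(F_n,M) \neq 0$ for every non-trivial irreducible $F_n$-module.

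To finish, by Remark \ref{rem:prop6.1notfingen} non-abelian free profinite groups do not have UBERG, i.e.\ the number of irreducible $\hat{\mathbb{Z}}\llbracket F_n \rrbracket$-modules of order $k$ does not grow polynomially in $k$. Since the trivial irreducible modules contribute at most one module per prime $k$ (a polynomial quantity), the number of non-trivial irreducibles of order $k$ also fails to be polynomially bounded. Combined with the previous paragraph, the number of irreducible $F_n$-modules $M$ of order $k$ with $H^1(F_n,M)\ne 0$ is not polynomially bounded, so Corollary \ref{Damian3} gives that $F_n$ does not have type $\PFP_1$ over $\hat{\mathbb{Z}}$. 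The only mildly delicate point is verifying the exactness of the four-term sequence above in the profinite setting, but this is standard since a basis of $I_{\hat{\mathbb{Z}}}\llbracket F_n \rrbracket$ is given by the Fox derivatives $x_i-1$ for a free generating set $\{x_1,\ldots,x_n\}$.
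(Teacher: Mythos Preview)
Your argument is correct and in fact cleaner than the paper's. The paper establishes the result for $n\geq 3$ by writing $F_n = \hat{\mathbb{Z}} \ast F_{n-1}$, applying the Mayer--Vietoris sequence for profinite free products to bound $\sum_S(|H^1(F_n,S)|-1)$ from below by $\sum_S(|H^1(\hat{\mathbb{Z}},S)|-1)$, and then counting the irreducible $F_n$-modules on which the $\hat{\mathbb{Z}}$-factor acts trivially (identified with irreducible $F_{n-1}$-modules, whose number grows superpolynomially by \cite[Lemma 6.16]{KV}); the case $n=2$ is then deduced separately via the Nielsen--Schreier theorem and the commensurability invariance of type $\PFP_1$. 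By contrast, you bypass both the free-product decomposition and the $n=2$ special case by using the explicit free resolution of $\hat{\mathbb{Z}}$ coming from $I_{\hat{\mathbb{Z}}}\llbracket F_n\rrbracket \cong \hat{\mathbb{Z}}\llbracket F_n\rrbracket^n$, which immediately gives $H^1(F_n,M)\neq 0$ for \emph{every} non-trivial irreducible $M$ when $n>1$; the failure of UBERG (Remark~\ref{rem:prop6.1notfingen}) then finishes the job via Corollary~\ref{Damian3}. Your route is more elementary and uniform in $n$; the paper's route has the minor advantage of only needing Theorem~\ref{extcondition} rather than the refinement in Corollary~\ref{Damian3}, but this is hardly a cost since $F_n$ manifestly has type $\FP_1$.
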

\begin{proof}
We first give a proof for $n=3$; the case for $n>3$ is proved similarly.

We think of $F_3$ as the profinite free product $\hat{\mathbb{Z}} \ast F_2$, for some fixed copy of $\hat{\mathbb{Z}}$. We use the Mayer-Vietoris sequence of \cite[Proposition 9.2.13]{RZ}. Note that our profinite free product is proper by \cite[Example 9.2.6]{RZ}, so the Mayer-Vietoris sequence applies.

Now $F_3$ has type $\FP_1$ because it is finitely generated. To show it does not have type $\PFP_1$ we use the cohomological characterisation: we will show $\sum_{S \in \mathcal{S}^{F_3}_k} |H^1(G,S)|-1$ grows faster than polynomially in $k$. By the Mayer-Vietoris sequence, it suffices to show $\sum_{S \in \mathcal{S}^{F_3}_k} |H^1(\hat{\mathbb{Z}},S)|-1$ grows faster than polynomially in $k$.

Let $\mathcal{T}^{F_3}_k$ be the set of irreducible $F_3$-modules of order $k$ on which restriction to $\hat{\mathbb{Z}}$ gives the trivial action. By the universal property of free products, we can identify this with $\mathcal{S}^{F_2}_k$. The sequence $|\mathcal{S}^{F_2}_k|$ grows faster than polynomially in $k$ by \cite[Lemma 6.16]{KV}. It is well-known (see \cite{Weibel}) that for any group $G$ and any trivial $G$-module $A$, $H^1(G,A) = \Hom(G,A)$, so for $S \in \mathcal{T}^{F_3}_k$, $H^1(\hat{\mathbb{Z}},S) = S$ and hence $$\sum_{S \in \mathcal{S}^{F_3}_k} |H^1(\hat{\mathbb{Z}},S)|-1 \geq \sum_{S \in \mathcal{T}^{F_3}_k} |H^1(\hat{\mathbb{Z}},S)|-1 = |\mathcal{T}^{F_3}_k|^2 - |\mathcal{T}^{F_3}_k|$$ grows faster than polynomially in $k$, as required.

Finally, for $n=2$, we note by \cite[Theorem 3.6.2]{RZ} that proper open subgroups of $F_2$ are free profinite groups of higher rank, which therefore do not have type $\PFP_1$; we conclude $F_2$ does not have type $\PFP_1$ by Proposition \ref{prop:PFPncommensurability}.
\end{proof}

\subsection{Type \texorpdfstring{$\PFP_n$}{PFPn} but not type \texorpdfstring{$\PFP_{n+1}$}{PFPn+1}}

In \cite[Proposition 4.6]{CC2}, a family $\{A_n\}$ of pro-$\mathcal{C}$ groups of type $\FP_n$ but not $\FP_{n+1}$ is constructed over $\mathbb{Z}_{\hat{\mathcal{C}}}$, the pro-$\mathcal{C}$ completion of $\hat{\mathbb{Z}}$, for any class $\mathcal{C}$ of finite groups closed under subgroups, quotients and extensions.

\begin{prop}
\label{FPnnotn+1}
Let $\mathcal{C}$ be the class of finite soluble groups, so that $\mathbb{Z}_{\hat{\mathcal{C}}} = \hat{\mathbb{Z}}$. Then $A_n$ has type $\PFP_n$ but not type $\PFP_{n+1}$ over $\hat{\mathbb{Z}}$.
\end{prop}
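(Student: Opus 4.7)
The strategy is to reduce everything to the classical $\FP_n$ theory via the prosoluble coincidence noted in Remark \ref{ex:PFPnprosoluble}. Since $\mathcal{C}$ is the class of finite soluble groups, each $A_n$ is a pro-$\mathcal{C}$ group, hence prosoluble. The first thing I would verify (by inspecting the construction in \cite[Proposition 4.6]{CC2}) is that each $A_n$ is finitely generated; this is essentially built into the construction, since the $A_n$ are obtained by iterating finite-index/extension-type operations starting from finitely generated data.

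Once finite generation is in hand, Remark \ref{rem:prop6.1notfingen} tells us that $A_n$ is PFG, and Corollary \ref{PFGtoUBERG} gives that $A_n$ has UBERG, i.e. the group ring $\hat{\mathbb{Z}}\llbracket A_n \rrbracket$ is PFG as a module over itself. By Remark \ref{ex:PFPnprosoluble}, this forces type $\PFP_k$ and type $\FP_k$ over $\hat{\mathbb{Z}}$ to coincide for $A_n$ for every $k$. Applying the quoted property of the family $\{A_n\}$ from \cite[Proposition 4.6]{CC2}, namely that $A_n$ has type $\FP_n$ but not type $\FP_{n+1}$ over $\hat{\mathbb{Z}}$, we immediately conclude that $A_n$ has type $\PFP_n$ but not type $\PFP_{n+1}$, as required.

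The argument is therefore essentially a bookkeeping check: the genuine content lies in the construction of \cite{CC2} and in the equivalence $\PFP_n \Leftrightarrow \FP_n$ for finitely generated prosoluble groups. The only place where something could in principle go wrong is confirming that the $A_n$ produced by the pro-$\mathcal{C}$ construction in \cite{CC2} are actually finitely generated when specialised to $\mathcal{C} = $ finite soluble groups; I would expect to write a single sentence pointing at the relevant line of \cite[Proposition 4.6]{CC2} to justify this. If for some reason the construction did not automatically give a finitely generated group, the fallback would be to invoke the characterisation in Theorem \ref{extcondition} directly, bounding $\sum_{S \in \mathcal{S}^{\hat{\mathbb{Z}}\llbracket A_n \rrbracket}_k}(|H^m_{\hat{\mathbb{Z}}}(A_n,S)|-1)$ for $m \le n$ using the same inductive structure that yields the $\FP_n$ property in \cite{CC2}; but I do not anticipate needing this.
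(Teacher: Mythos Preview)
Your approach for $n \geq 1$ is exactly what the paper does: the $A_n$ are finitely generated prosoluble for $n \geq 1$, so Remark~\ref{ex:PFPnprosoluble} collapses $\PFP_k$ to $\FP_k$ and the cited result from \cite{CC2} finishes.

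The gap is at $n=0$. You assert that ``each $A_n$ is finitely generated'' and that this is ``essentially built into the construction'', but in fact $A_0$ is \emph{not} finitely generated (this is how \cite[Proposition~4.6]{CC2} arranges for $A_0$ to fail type $\FP_1$). Consequently the route through PFG, UBERG and Remark~\ref{ex:PFPnprosoluble} is unavailable for $A_0$, and your ``single sentence pointing at the relevant line'' cannot exist. The paper handles this case separately: type $\PFP_0$ over $\hat{\mathbb{Z}}$ is automatic (since $\hat{\mathbb{Z}}$ is PFG as a $\hat{\mathbb{Z}}$-module), while $A_0$ fails type $\FP_1$ over $\hat{\mathbb{Z}}$ because prosoluble groups of type $\FP_1$ are finitely generated (\cite[Corollary~2.4]{Damian}), and hence $A_0$ fails the weaker condition $\PFP_1$ as well. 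Your proposed fallback via Theorem~\ref{extcondition} is unnecessary here; the trivial implication $\PFP_1 \Rightarrow \FP_1$ already does the job.
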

\begin{proof}
By Remark \ref{ex:PFPnprosoluble}, for finitely generated prosoluble groups, type $\PFP_n$ over $R$ is equivalent to type $\FP_n$ over $R$. Since $A_n$ is finitely generated for $n \geq 1$, we conclude that $A_n$ has type $\PFP_n$ but not type $\PFP_{n+1}$ over $\hat{\mathbb{Z}}$ for $n \geq 1$. For $n=0$, $A_0$ is not finitely generated, hence not of type $\FP_1$ over $\hat{\mathbb{Z}}$ by \cite[Corollary 2.4]{Damian}, hence not of type $\PFP_1$.
\end{proof}

As an additional example, we consider the iterated wreath products described in \cite{Vannacci}. Let $G$ be the infinitely iterated wreath product of copies of the alternating group $A_{36}$ defined in \cite[Remark 2]{Vannacci}. $G$ is PFG by the proof of \cite[Theorem A]{Quick}, so it has type $\PFP_1$ over $\hat{\mathbb{Z}}$, and has UBERG -- though it is not finitely presented, by \cite[Remark 2]{Vannacci}.

\begin{prop}
$G$ does not have type $\PFP_2$ over $\hat{\mathbb{Z}}$.
\end{prop}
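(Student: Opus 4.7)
The plan is to deduce that if $G$ had type $\PFP_2$ over $\hat{\mathbb{Z}}$, then $G$ would have to be finitely presented, contradicting the cited fact that $G$ is not. Since $G$ is PFG it is finitely generated; suppose for contradiction that $G$ has type $\PFP_2$ over $\hat{\mathbb{Z}}$. Then $G$ has type $\FP_2$ over $\hat{\mathbb{Z}}$ (since type $\PFP_n$ trivially implies type $\FP_n$), so in a minimal projective resolution $P^G_\ast$ of $\hat{\mathbb{Z}}$ the kernel $K^G_1$ is finitely generated as a $\hat{\mathbb{Z}}\llbracket G \rrbracket$-module.

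I would then invoke the corollary proved in Section \ref{pfpn_groups} giving
\[
d_{\hat{\mathbb{Z}}\llbracket G \rrbracket}(K^G_1) \;=\; \sup_{M \in \mathcal{S}^{\hat{\mathbb{Z}}\llbracket G \rrbracket}} \bigl\lceil h_G^2(M)/r_G(M) \bigr\rceil,
\]
so that finite generation of $K^G_1$ is equivalent to the existence of a constant $C$ with $h_G^2(M) \leq C\,r_G(M)$ for every irreducible $\hat{\mathbb{Z}}\llbracket G \rrbracket$-module $M$. Since for any such $M$ (which is in particular an $\mathbb{F}_p$-vector space for some $p$) the dimensions $h_G^2(M)$ and $r_G(M)$ equal $\dim_{\mathbb{F}_p} H^2(G,M)$ and $\dim_{\mathbb{F}_p} M$ divided by the common factor $\dim_{\mathbb{F}_p} \End_G(M)$, this bound is equivalent to
\[
\dim_{\mathbb{F}_p} H^2(G,M) \;\leq\; C \dim_{\mathbb{F}_p} M
\]
holding uniformly for every prime $p$ and every irreducible $\mathbb{F}_p\llbracket G \rrbracket$-module $M$.

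By Lubotzky's characterisation \cite[Theorem 0.3]{Lubotzky} (already used in Section \ref{sec:PFGPFR}), the last inequality is exactly the condition that the finitely generated profinite group $G$ be finitely presented. This contradicts \cite[Remark 2]{Vannacci}, completing the proof.

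There is no serious obstacle in this approach; the only minor bookkeeping is the passage from $\End_G(M)$-dimensions (used in the minimal-resolution formulas) to $\mathbb{F}_p$-dimensions (used in Lubotzky's criterion), which is immediate since both $h_G^2(M)$ and $r_G(M)$ scale by the same factor $\dim_{\mathbb{F}_p} \End_G(M)$. One could alternatively argue via Theorem \ref{extcondition} by showing $\sum_{S} (|H^2(G,S)| - 1)$ has super-polynomial growth in the order of $S$, but doing so would require quantitative input beyond the mere unboundedness of $h_G^2(M)/r_G(M)$ given by Lubotzky's theorem, and so the $\FP_2$-route above is cleaner.
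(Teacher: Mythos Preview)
Your proof is correct and reaches the same endpoint as the paper—namely, that $G$ fails to have type $\FP_2$—but by a different route. The paper argues directly: since $G$ has UBERG, $\PFP_2$ and $\FP_2$ coincide, and then it computes the Schur multiplier $H_2(G,\hat{\mathbb{Z}})$ explicitly. Using Read's theorem on Schur multipliers of wreath products, the $n$th finite truncation $W_n$ has $H_2(W_n,\hat{\mathbb{Z}}) = (\mathbb{Z}/2\mathbb{Z})^n$, and passing to the inverse limit gives $H_2(G,\hat{\mathbb{Z}}) \cong \prod_{\mathbb{N}} \mathbb{Z}/2\mathbb{Z}$, which is not finitely generated; hence $G$ is not of type $\FP_2$.

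Your approach instead packages the obstruction through Lubotzky's criterion: you use the minimal-resolution formula from Section~\ref{pfpn_groups} to show that type $\FP_2$ forces the uniform bound $\dim H^2(G,M) \leq C \dim M$, hence finite presentability, and then invoke the already-cited fact from \cite{Vannacci} that $G$ is not finitely presented. This is entirely valid, and in some sense more economical given that the paper has just recorded the non-finite-presentability of $G$ immediately before the proposition. What the paper's approach buys in exchange is an explicit identification of \emph{where} the failure occurs (the infinite Schur multiplier), rather than appealing to \cite{Vannacci} as a black box; your approach buys a cleaner logical reduction using only machinery developed earlier in the paper.
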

\begin{proof}
Since $G$ has UBERG, it is enough to show $G$ does not have type $\FP_2$. $A_{36}$ has Schur multiplier of size $2$, so for $W_n$ the iterated wreath product of copies of $A_{36}$, iterated $n$ times, we get $H_2(W_n,\hat{\mathbb{Z}}) = (\mathbb{Z}/2\mathbb{Z})^n$ by \cite[Theorem 3]{Read}. Taking inverse limits over $n$, $H_2(G,\hat{\mathbb{Z}})$ is an infinite product of copies of $\mathbb{Z}/2\mathbb{Z}$. In particular, it is not finitely generated, so $G$ does not have type $\FP_2$ by \cite[Lemma 4.5]{CC2}.
\end{proof}

\end{document}